\definecolor{darkred}{rgb}{0.5,0,0}
\definecolor{darkgreen}{rgb}{0,0.5,0}
\definecolor{darkblue}{rgb}{0,0,0.5}
\numberwithin{equation}{section}
\newtheorem{proposition}{Proposition}[section]
\newtheorem{lemma}[proposition]{Lemma}
\newtheorem{corollary}[proposition]{Corollary}
\newtheorem{claim}[proposition]{Claim}
\theoremstyle{definition}
\newtheorem{theorem}{Theorem}
\newtheorem*{theorem*}{Theorem}
\newtheorem{definition}[proposition]{Definition}
\newtheorem{remark}[proposition]{Remark}
\setlist{nosep}
\setlist{noitemsep}
\setlist{leftmargin=*}
\newcommand\smbullet[1][.5]{\mathbin{\vcenter{\hbox{\scalebox{#1}{$\bullet$}}}}}
\newcommand{\sbullet}{{\smbullet[0.6]}}
\def\XXint#1#2#3{{\setbox0=\hbox{$#1{#2#3}{\int}$}
     \vcenter{\hbox{$#2#3$}}\kern-.5\wd0}}
\newcommand{\R}{\mathbb{R}}
\newcommand{\Z}{\mathbb{Z}}
\renewcommand{\epsilon}{\varepsilon}
\newcommand{\hal}{\frac{1}{2}}
\newcommand{\dd}{\mathtt{d}}
\newcommand{\E}{\mathbb{E}}
\newcommand{\dist}{\mathrm{dist}}
\newcommand{\La}{\Lambda}
\newcommand{\Cov}{\mathrm{Cov}}
\newcommand{\corT}{}
\begin{document}
\title{\LARGE{The link between hyperuniformity, Coulomb energy, and Wasserstein distance to Lebesgue for two-dimensional point processes}}

\author{\large{Martin Huesmann\thanks{Institute for Mathematical Stochastics, University of Münster, \texttt{martin.huesmann@uni-muenster.de}}, Thomas Leblé\thanks{ Université de Paris-Cité, CNRS, MAP5 UMR 8145, F-75006 Paris, France \texttt{thomas.leble@math.cnrs.fr} }}}

\date{\small{\today}}
\maketitle
\begin{abstract}
We investigate the interplay between three possible properties of stationary point processes: i) Finite Coulomb energy with short-scale regularization, ii) Finite $2$-Wasserstein transportation distance to the Lebesgue measure and iii) Hyperuniformity. In dimension $2$, we prove that i) implies ii), which is known to imply iii), and we provide simple counter-examples to both converse implications. However, we prove that ii) implies i) for processes with a uniformly bounded density of points, and that i) - finiteness of the regularized Coulomb energy - is equivalent to a certain property of quantitative hyperuniformity that is just slightly stronger than hyperuniformity itself.

Our proof relies on the classical link between $H^{-1}$-norm and $2$-Wasserstein distance between measures, on the screening construction of \cite{sandier20152d} for Coulomb gases (of which we present an adaptation to $2$-Wasserstein space which might be of independent interest), and on the recent necessary and sufficient conditions given in \cite{sodin2023random} for the existence of stationary “electric” fields compatible with a given stationary point process.
\end{abstract}

\newcommand{\bX}{\mathbf{X}}
\newcommand{\bbX}{\mathbf{X}^{\sbullet}}

\newcommand{\X}{\mathrm{X}}
\renewcommand{\d}{\mathsf{d}}
\newcommand{\Rd}{\R^\d}
\newcommand{\Zd}{\Z^\d}
\newcommand{\B}{\mathrm{B}}
\newcommand{\BR}{\B_r}
\newcommand{\Var}{\mathrm{Var}}
\newcommand{\SL}{\mathbf{L}}
\newcommand{\PSL}{\mathbf{PL}}
\renewcommand{\P}{\mathbb{P}}
\newcommand{\p}{\mathrm{p}}
\newcommand{\bp}{\bar{\p}}

\renewcommand{\O}{\mathcal{O}}

\newcommand{\kk}{\mathsf{k}}
\newcommand{\1}{\mathsf{1}}
\newcommand{\2}{\mathsf{1}}
\newcommand{\Hess}{\mathrm{Hess}}

\renewcommand{\div}{\mathrm{div}}
\newcommand{\sigmaX}{\sigma}
\newcommand{\rhoX}{\rho_{\bX}}
\newcommand{\El}{\mathrm{E}}
\newcommand{\bEl}{\mathrm{E}^{\sbullet}}
\newcommand{\jr}{j_r}
\newcommand{\bPi}{\Pi^{\sbullet}}
\newcommand{\Proba}{\mathbb{P}}

\newcommand{\Leb}{\mathrm{Leb}}
\newcommand{\Cpl}{\mathsf{cpl}}
\newcommand{\PCpl}{\mathsf{Pcpl}}
\newcommand{\tPp}{\widetilde{\Pp}}
\newcommand{\Conf}{\mathrm{Conf}}
\newcommand{\Pro}{\mathcal{P}}
\newcommand{\tbX}{\widetilde{\bX}}
\newcommand{\btbX}{\widetilde{\bX}^\sbullet}

\newcommand{\tEl}{\widetilde{\El}}
\newcommand{\btEl}{\widetilde{\El}^\sbullet}

\newcommand{\HU}{\texttt{HU}}
\newcommand{\SHU}{\texttt{HU}_\star}
\newcommand{\EU}{\texttt{Coul}}
\newcommand{\WD}{\texttt{Wass}_2}
\newcommand{\WU}{\texttt{Wass}_1}
\newcommand{\SC}{\texttt{SC}}

\newcommand{\Pp}{\mathrm{P}}
\newcommand{\Coul}{\mathsf{Coul}}
\newcommand{\Ww}{\mathsf{Wass}}
\newcommand{\cd}{c_{\d}}
\newcommand{\AK}{\mathcal{A}_{k}}

\section{Introduction}
\label{sec:intro}
\subsection{Setting and main results}
Let $\d \geq 1$ be the ambient dimension and let $\bbX$ be a stationary point process, i.e.\ a random locally finite configuration of points in $\Rd$ whose distribution is invariant under translations.

\begin{description}
   \item[($\HU$)] For $r > 0$, we let $\BR$ be the closed Euclidean ball of radius $r$ centered at the origin and if $\bX$ is a point configuration we denote by $|\bX \cap \BR|$  the number of its points in $\BR$. We say that a point process $\bbX$ is \emph{hyperuniform} when (see Section \ref{sec:HU} for more context):
   \begin{equation}
   \label{def:HU}
\sigmaX(r) := \frac{\Var[|\bbX \cap \BR|]}{|\BR|} \longrightarrow 0 \text{ as } r \to \infty.
   \end{equation}

   \item[($\WD$)] We say that $\bbX$ is at finite $2$-Wasserstein distance of the Lebesgue measure when there exists a stationary random coupling between the points of $\bbX$ and the Lebesgue measure, whose quadratic cost per unit volume is bounded on average (see Section \ref{sec:DefWass}).

   \item[($\EU$)] We say that a stationary point process $\bbX$ has \emph{finite regularized Coulomb energy} when there exists a stationary \emph{electric field} $\bEl$ compatible with $\bbX$ (a random vector field which could be the gradient of the Coulomb potential generated by the random points of $\bbX$) that satisfies: $\E\left[|\bEl_\eta(0)|^2\right] < + \infty$, where $\eta \in (0,1)$ is any short-distance truncation lengthscale. We refer to Section \ref{sec:CoulombEnergy} for a precise definition.

\end{description}
There has been recent interest in studying the link between those three notions when the dimension $\d = 2$. In \cite[\corT{Thm. $1$}]{HUPL}, it is proven that $\WD$ implies $\HU$, confirming statements in the statistical physics literature. In \cite{lachieze2024hyperuniformity}, a partial converse is stated as follows: if a point process is hyperuniform ($\HU$)  and if its two-point correlation function satisfies \emph{“a mild integrability condition”} then it is at finite distance to Lebesgue ($\WD$). We discuss those results in Section \ref{sec:discussion}.

We argue here that the integrability condition of \cite{lachieze2024hyperuniformity} essentially corresponds to the property $\EU$ of having finite regularized Coulomb energy, and that $\EU$ implies indeed $\WD$. Moreover, we introduce a slight reinforcement of the notion of hyperuniformity, which we call $\SHU$ (see Definition~\ref{def:SHU}), and we show that $\SHU$ implies $\EU$. In summary, for $\d=2$:
\begin{equation*}
\SHU \implies \EU \implies \WD \implies \HU.
\end{equation*}
Moreover, we prove that the first implication is an equivalence but that the other two are not.

\subsubsection*{From finite Coulomb energy to finite $2$-Wasserstein distance (...to hyperuniformity)}
Our first result is the implication $\EU \implies \WD$, which is in fact true for all $\d \geq 2$.
\begin{theorem}[$\EU \implies \WD$]
\label{theo:EUWD}
In any dimension $\d \geq 2$, $\EU$ implies $\WD$. More precisely, there exists a constant $C$ depending only on the dimension such that, for all stationary point processes $\bbX$:
\begin{equation}
\label{EUWD}
\Ww^2_2(\bbX, \Leb) \leq C\left(\Coul_1(\bbX) + 1\right),
\end{equation}
thus effectively bounding the $2$-Wasserstein distance to Lebesgue (defined in Section \ref{sec:DefWass}) by the regularized Coulomb energy $\Coul_1$ (see Definition \ref{def:RegularizedEnergy}).
\end{theorem}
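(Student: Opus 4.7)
The plan is to combine three ingredients. First, a stationary electric field $\bEl$ compatible with $\bbX$ satisfies $-\div \bEl = \cd(\bbX - \Leb)$ after short-range regularization, so $\E|\bEl_\eta(0)|^2 < \infty$ is morally a stationary $\dot H^{-1}$-type bound on the random signed measure $\bbX - \Leb$. Second, the Benamou--Brenier formula, or equivalently a Peyre-type inequality, converts an $L^2$ vector field whose divergence is $\mu_1-\mu_0$ into a $W_2$-transport cost, provided the interpolating density stays bounded away from zero. Third, the screening construction of \cite{sandier20152d} lets one modify $\bEl$ in a thin boundary layer around a large box so that the modified field has the prescribed divergence, is tangent to the boundary, and has controlled $L^2$-cost. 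Assembled correctly, these ingredients should yield, up to boundary corrections negligible per unit volume, the bound $\Ww_2^2(\bbX, \Leb) \leq C(\Coul_1(\bbX) + 1)$ in every dimension $\d \geq 2$.

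Concretely, on a large box $Q_R = [-R,R]^\d$ I would first apply screening to produce a random field $\tEl_R$ supported in $Q_R$, tangent to $\partial Q_R$, with $-\div \tEl_R = \cd(\bbX|_{Q_R} - \Leb|_{Q_R})$ in the regularized sense, and with
\begin{equation*}
\E\!\left[\int_{Q_R} |\tEl_R|^2\right] \leq C \bigl(\Coul_1(\bbX) + 1\bigr)|Q_R| + o(|Q_R|),
\end{equation*}
the bulk term coming from stationarity of $\bEl$ and the correction from a boundary shell of width $O(1)$. To turn $\tEl_R$ into a transport plan between $\bbX|_{Q_R}$ and $\Leb|_{Q_R}$ I would proceed in two stages: a cheap smearing stage in which each point of $\bbX$ is spread into the uniform measure on a ball of radius $O(1)$, producing an intermediate measure $\mu^{\mathrm{sm}}$ at expected $W_2^2$-cost $O(|Q_R|)$ using only the unit-intensity assumption; and a bulk stage from $\mu^{\mathrm{sm}}$ to $\Leb|_{Q_R}$ along the linear interpolation, with velocity field obtained from a smoothed version of $\tEl_R$ divided by the interpolating density. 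Since $\mu^{\mathrm{sm}}$ has tame density, the Benamou--Brenier integral for the bulk stage is dominated by $\int_{Q_R}|\tEl_R|^2$ up to a multiplicative constant, yielding the desired $W_2$ bound on $Q_R$. One then divides the expected total cost by $|Q_R|$, lets $R\to\infty$, and randomises the box origin to produce a genuinely stationary coupling, giving the target inequality.

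The hard part will be the middle step: marrying the screening of \cite{sandier20152d}, which was engineered for the Coulomb (negative Sobolev) energy, with the Benamou--Brenier formulation, which is sensitive to the singularity of the point measure. The adaptation of screening to Wasserstein space advertised in the abstract has to (i) deliver a modified field that can serve as a current connecting the \emph{smeared} point measure to Lebesgue without spoiling the divergence condition, (ii) absorb the boundary cost into a correction that is $o(|Q_R|)$ uniformly in the random input, and (iii) remain measurable so that passing to the expectation is legitimate. The Dirac singularities at the original points, where a naive expression $|\tEl_R|^2/\rho_t$ is not integrable at $t=0$, have to be detoured via the smearing step while keeping the smearing cost genuinely $O(1)$ per point on average; this last control, and the compatibility between the smoothing scale of the measure and the truncation scale $\eta$ built into $\Coul_1$, is the main technical point.
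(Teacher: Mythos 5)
Your roadmap is the same as the paper's (screen the field in a large box, convert the screened field into a transport plan via Benamou--Brenier, pay an $O(1)$-per-point cost for smearing/unsmearing at the truncation scale, tile and shift to restore stationarity, pass to the limit), but two steps do not work as written. The first is the bulk stage ``along the linear interpolation.'' With $\mm_t=(1-t)\Leb+t\,\mu^{\mathrm{sm}}$ and velocity $v_t=\tEl_R/\rho_t$, the action is $\int_0^1\int|\tEl_R|^2/\rho_t$, and the obstruction is not the singularity \emph{at} the points (after smearing the density is bounded above there) but the \emph{voids}: wherever the configuration has no point within $O(1)$ --- a positive-probability event for a general stationary process --- one has $\rho^{\mathrm{sm}}=0$, hence $\rho_t=1-t$ and $\int_0^1(1-t)^{-1}\,\dd t=+\infty$. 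A ``tame'' (i.e.\ upper) bound on the density of $\mu^{\mathrm{sm}}$ does not help; what is needed is a lower bound, which is exactly what fails at the point-measure end. The paper's Lemma~\ref{lem:W2Hm1} circumvents this by reparametrizing the interpolation as $\theta(t)=1-(1-t)^2$ (following Ledoux), for which $(\theta')^2/(1-\theta)$ is bounded, so the action is controlled by $\int|\tEl_R|^2$ with no lower bound on the target density.

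The second issue is the conclusion ``divide by $|Q_R|$, let $R\to\infty$, randomise the box origin.'' The screened configuration differs from $\bbX$ in a boundary layer, and the screenability condition holds only on an event of probability $1-o(1)$, so what you actually control is the distance to Lebesgue of auxiliary processes, not of $\bbX$. The paper builds genuinely stationary approximants $\bbX_n$ by tiling $\Rd$ with i.i.d.\ copies of the screened box, gluing the per-box couplings across common faces (legitimate precisely because the fields are screened), and averaging over a uniform shift; it then transfers the bound to $\bbX$ via convergence in distribution $\bbX_n\to\bbX$ together with the lower semicontinuity of $\Ww_2^2$ on stationary point processes. A single randomly-centred box does not produce a stationary coupling, and without the semicontinuity step the inequality is proved only for the modified processes. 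Finally, note that for this direction only the classical electric screening of Sandier--Serfaty/Petrache--Serfaty is needed; the ``screening in Wasserstein space'' is required for the converse implication (Theorem~\ref{theo:WDdensity}), so the difficulty you locate there belongs to a different theorem.
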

The Wasserstein-Coulomb inequality \eqref{EUWD} can be thought of as the infinite-volume equivalent of classical inequalities bounding the $2$-Wasserstein distance between measures by the Sobolev $H^{-1}$-norm of their difference (see \cite[Sec. 2]{Ledoux_2019} and the references therein). Combined with \cite[\corT{Thm. 1}]{HUPL} this yields (in dimension $2$):
\begin{equation}
\label{EUWDHU}
\EU \implies \WD \implies \HU.
\end{equation}
We prove Theorem \ref{theo:EUWD} in Section \ref{sec:E1W2}. 

It is natural to ask whether the converse implications hold in \eqref{EUWDHU}. Thanks to simple counter-examples, we show that the answer is negative:
\begin{theorem}[$\WD \centernot \implies \EU$, $\HU \centernot \implies \WD$]
\label{theo:counter}
In dimension $2$: 
\begin{enumerate}
   \item There exists a stationary point process which has finite $2$-Wasserstein distance to Lebesgue but infinite regularized Coulomb energy. Hence $\WD$ does not imply $\EU$.
   \item There exists a stationary point process which is hyperuniform yet has infinite $1$-Wasserstein distance to Lebesgue. In particular, its $2$-Wasserstein distance is also infinite, and thus $\HU$ does not imply $\WD$.
\end{enumerate}
\end{theorem}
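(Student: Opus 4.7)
The proofs of both parts of Theorem~\ref{theo:counter} are by explicit construction of stationary point processes. I outline my plan for each.

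\textbf{Part (1) ($\WD \not\Rightarrow \EU$).} The strategy exploits the asymmetry between $\Ww_2$, which only sees local mass displacements, and the Coulomb energy, which is additionally sensitive to long-range charge imbalances through the $2$-D logarithmic Green function. I would construct the counter-example as a stationary superposition, at a sequence of scales $R_k = 2^k$, of rare ``monopole-plus-hole'' defects: each scale-$k$ defect consists of a cluster of $\sim R_k^2$ points at a single site, balanced by the removal of matching mass from a surrounding disk of radius $R_k$. Placing the defect centers at each scale as an independent Poisson process of intensity $\rho_k$ preserves stationarity. A direct computation yields, per defect, a $\Ww_2^2$-cost of order $R_k^4$ (spreading $\sim R_k^2$ unit masses over a disk of radius $R_k$) and a Coulomb energy of order $R_k^4 \log R_k$ (by Gauss's law, any compatible field must reproduce the $R_k^2/r$ monopole profile inside the defect, and integrating $|E|^2$ gives the extra logarithmic factor). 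Tuning $\rho_k \asymp 1/(k^2 R_k^4)$ then makes $\sum_k \rho_k R_k^4 < +\infty$ while $\sum_k \rho_k R_k^4 \log R_k = +\infty$, separating the two notions.

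\textbf{Part (2) ($\HU \not\Rightarrow \WD$).} Since $\Ww_1 \leq \Ww_2$, it suffices to produce a hyperuniform process with $\Ww_1 = +\infty$. I would perform a similar multi-scale construction using ``bulk dipole'' defects: at each scale $L_k$, sparse events transport a block of $M_k$ points across a distance $L_k$. Such a defect contributes order $M_k L_k$ to the $W_1$-cost density and order $M_k^2$ to the count variance in each ball $B_r$ it straddles. The plan is to choose $\rho_k, L_k, M_k$ so that $\sum_k \rho_k L_k M_k = +\infty$ while the bulk and boundary variance contributions remain $o(r^2)$ as $r \to \infty$. The key is that the multiplicity parameter $M_k$ enters linearly in the $W_1$ cost but quadratically in the variance, opening a window in which both demands are simultaneously satisfiable.

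\textbf{Main obstacles.} The upper bounds on $\Ww_2^2$ and the count variances are obtained by explicit matchings and direct integration. The hard part is the lower bounds. For (1), I must show that \emph{every} stationary compatible field $\bEl$ satisfies $\E[|\bEl_1(0)|^2] = +\infty$: this follows from Gauss's law applied to each defect's enclosed monopole charge, forcing $|\bEl| \gtrsim R_k^2/r$ pointwise on most of the defect and hence the claimed energy. For (2), the $\Ww_1$ lower bound is the most delicate point: via Kantorovich duality, it suffices to exhibit a $1$-Lipschitz test function adapted to the dipole structure of the defects (for instance, a truncated coordinate-like profile aligned with each dipole direction) such that $\int f \, d(\bbX - \Leb)$ has unbounded expectation per unit area. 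A secondary issue in both parts is to verify that cross-scale interactions between defects remain negligible and that the processes are genuinely stationary, both of which follow from the independence of the scales in the Poisson construction.
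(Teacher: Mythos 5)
Your Part (1) is essentially the paper's argument. The paper builds each scale-$N$ block by sending all $N^2$ lattice points of each $N\times N$ cube to its center, and its Lemma~\ref{lem:toomanyhasacost} is exactly your Gauss-law step: the enclosed excess charge forces the \emph{circular average} of the normal component of any compatible field to be $\gtrsim R^2/r$, and Cauchy--Schwarz on dyadic annuli then yields the $R^4\log R$ energy (your claim of a \emph{pointwise} lower bound on $|\bEl|$ is not literally correct, since compatible fields are only constrained through their divergence, but the $L^2$ conclusion survives). The only structural difference is that the paper uses a \emph{mixture} over scales rather than a superposition, which eliminates all cross-scale interaction and overlap issues at no cost; I would recommend the same simplification to you, since for a superposition the energy lower bounds attached to overlapping balls $\B_{R_k}(z)$ cannot simply be added.

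Part (2) has a genuine gap: the per-defect contribution of order $M_kL_k$ is the cost of one particular transport plan, not a lower bound on the optimal one, and the per-scale contributions cannot be summed the way you propose. Two concrete obstructions. First, for any \emph{deterministic} $1$-Lipschitz $f$ one has $\E\left[\int f\,\dd(\bbX-\Leb)\right]=0$ by stationarity and unit intensity (Campbell's formula), so the dual test function must be built from the realization; and if you then try $f=\sum_k f^{(k)}$ with $f^{(k)}$ a sum of tents adapted to the scale-$k$ dipoles, the supports at different scales overlap and the Lipschitz constant degrades linearly in the number of scales, which exactly cancels the divergence of $\sum_k\rho_kM_kL_k$. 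Second, and more fundamentally, the paper's Theorems~\ref{theo:EUWD} and~\ref{theo:SHUEU} give $\Ww_1^2\le\Ww_2^2\preceq\sum_n\sigmaX(2^n)+1$, so any hyperuniform process with $\Ww_1=+\infty$ must have $\sum_n\sigmaX(2^n)=+\infty$: the cost can only accumulate across scales in an $\ell^2$ (square-root-of-log) fashion, never as the $\ell^1$ sum $\sum_k\rho_kM_kL_k$ your accounting relies on. (Concretely, a single-scale dipole layer with bounded $\sup_r\sigmaX(r)$ has $\w_1\preceq\sqrt{\log L}$, not $\asymp \rho M L$: excesses of one dipole are absorbed by deficits of its neighbours, and the matching collapses to an Ajtai--Koml\'os--Tusn\'ady problem.) This is precisely why the paper instead pastes Binomial blocks of $N^2$ points into $N\times N$ squares and invokes the $\sqrt{\log N}$ matching lower bound of \cite{HuMaOt23} (condition \eqref{eq:goodf}), whose dyadic martingale structure is what allows the contributions of all scales up to $N$ to add up orthogonally. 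Your construction is not obviously unsalvageable, but proving $\Ww_1=+\infty$ for it would require importing exactly that multi-scale orthogonality argument, so the "more elementary" route you sketch does not exist.
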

The constructions leading to Theorem \ref{theo:counter} are presented in Section \ref{sec:constructions}.
\newcommand{\brho}{\bar{\rho}}

\subsubsection*{Finite $2$-Wasserstein distance plus density bound implies finite Coulomb energy}
The fact that $\EU$ implies $\WD$ but not vice-versa is related to the slight difference in behavior between the Coulomb energy and the $2$-Wasserstein distance with respect to an abnormal concentration of points at a given place. In short, the cost of such situations in terms of Coulomb energy is larger than the cost in $2$-Wasserstein distance by a logarithmic factor, which gives us enough freedom to construct counter-examples. In a related direction, it is observed in \cite[\corT{Cor. 3.8}]{HUPL} that finiteness of the $(2+\epsilon)$-Wasserstein distance for some $\epsilon > 0$ is enough to imply $\SHU$ and thus, by our results, $\Coul$.

However, if we assume that the points of $\bbX$ \emph{never} gather too much, in the sense of a uniform, almost sure bound on the local density of points for $\bbX$, then we get a converse statement. 
\begin{theorem}[$\WD$ plus density bound implies $\EU$]
\label{theo:WDdensity}
Let $\bbX$ be a two-dimensional stationary point process satisfying $\WD$. Assume that there exists $\brho \geq 1$ such that ($\B_1(x)$ being the unit disk around $x$):
   \begin{equation}
   \label{eq:densitybound}
   \sup_{x \in \Rd} \left|\bbX \cap \B_1(x)\right| \leq \brho \text{ almost surely.}
   \end{equation}
Then $\EU$ is satisfied and in fact, for some constant $C$ not depending on $\bbX$, we have:
\begin{equation}
\label{eq:CoulWwbrho}
\Coul_1(\bbX) \leq C \brho \left(  \Ww^2(\bbX, \Leb) + 1\right)
\end{equation}
\end{theorem}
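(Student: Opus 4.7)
The plan is to combine the classical Loeper--Peyre link between the $2$-Wasserstein distance and the $H^{-1}$-norm of a signed measure -- namely $\|\mu-\nu\|_{H^{-1}}^2\le \|\mu\|_{L^\infty}\,\Ww^2(\mu,\nu)$ for two probability measures one of which has bounded density -- with a smearing of $\bbX$ that turns it into an absolutely continuous measure of density bounded by $\bar{\rho}$. This lets $\bar{\rho}$ play the role of the $L^\infty$-bound in Loeper's inequality, while the Wasserstein cost is only inflated by $O(1)$ per unit volume.

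\textbf{Step 1 (smearing).} First I would set $\chi := |\B_1|^{-1}\I_{\B_1}$ and $\mu := \bbX\ast\chi$. Since $\mu(x)=|\bbX\cap\B_1(x)|/|\B_1|$, the density bound \eqref{eq:densitybound} gives $\mu \le \bar{\rho}/|\B_1|$ almost surely. Because $\Ww^2(\bbX,\Leb)<\infty$ forces $\bbX$ to have intensity $1$, moving each Dirac $\delta_p$ of $\bbX$ to $\chi(\cdot-p)$ costs $O(1)$ per point and hence $O(1)$ per unit volume, so by the triangle inequality in $\Ww$,
\begin{equation*}
\Ww^2(\mu,\Leb) \le 2\,\Ww^2(\bbX,\Leb) + C.
\end{equation*}

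\textbf{Step 2 (Loeper's argument).} The regularized Coulomb energy $\Coul_1(\bbX)$ is, up to constants, the expected squared norm per unit volume of the stationary vector field $\bEl = -\nabla h$ with $-\Delta h = c_\d(\mu-\Leb)$. Along the displacement interpolation $(\mu_t)_{t\in[0,1]}$ from $\Leb$ to $\mu$, with velocity field $v_t$ satisfying the continuity equation $\partial_t\mu_t + \mathrm{div}(\mu_t v_t) = 0$, integration by parts yields
\begin{equation*}
\|\bEl\|_{L^2}^2 \;=\; c_\d \int_0^1\!\!\int \nabla h\cdot v_t\, d\mu_t\, dt.
\end{equation*}
The standard $L^\infty$-bound along displacement interpolations in flat geometry gives $\|\mu_t\|_\infty \le \max(\|\mu\|_\infty,1) \le \bar{\rho}/|\B_1|$ for all $t$; two applications of Cauchy--Schwarz then produce $\|\bEl\|_{L^2}^2 \le C\bar{\rho}\,\Ww^2(\mu,\Leb)$, which together with Step~1 yields \eqref{eq:CoulWwbrho}.

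\textbf{Step 3 (stationary construction) and main obstacle.} The above gives only an \emph{a priori} estimate; the real work is to produce an honest \emph{stationary} electric field $\bEl$ compatible with $\bbX$. I would localise on a large cube $Q_L$, carry out Steps~1--2 there, and pass to a stationary limit. The key ingredient is the announced adaptation of the screening construction of \cite{sandier20152d} to $2$-Wasserstein space: one modifies the optimal coupling in a thin boundary layer so that (i) the density bound $\bar{\rho}$ is preserved, (ii) the modified configuration admits a compatible field with vanishing normal flux through $\partial Q_L$ (so that $\bEl$ can be periodised and then stationarised by averaging over translates), and (iii) the extra Wasserstein cost is $o(L^{\d})$, negligible per unit volume. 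Preserving $\bar{\rho}$ \emph{while} controlling this Wasserstein boundary cost is the hard part; once that is achieved, the Loeper bound on $Q_L$ and a compactness argument deliver a stationary $\bEl$ satisfying \eqref{eq:CoulWwbrho} in the limit.
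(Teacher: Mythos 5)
Your strategy coincides with the paper's: smear $\bbX$ at scale $1$ so that the density bound \eqref{eq:densitybound} becomes an $L^\infty$ bound on the regularized measure, apply the Loeper-type inequality $\int|v|^2\le\sup_t\|\mathrm{m}_t\|_\infty\,\mathrm{W}_2^2$ with $v:=\int_0^1 \mathrm{m}_t v_t\,\mathtt{d}t$ built from a Benamou--Brenier interpolation (this is exactly Lemma~\ref{lem:ReverseH1W2} and Corollary~\ref{coro:pointprocessWassH1}), then localise on large cubes, screen, periodise, and conclude by lower semi-continuity of $\Coul_{\eta}$. Steps~1 and~2 are correct as written (the paper phrases Step~2 by exhibiting $v$ directly rather than via the duality pairing with $\nabla h$, but the two are equivalent), and you correctly note that the smearing only inflates the Wasserstein cost by $O(1)$ per unit volume since finiteness of $\Ww_2(\bbX,\Leb)$ forces unit intensity.

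The genuine gap is that your Step~3 names, but does not construct, the Wasserstein screening -- and this is precisely the main technical content of the paper's proof (Lemma~\ref{lem:ScreeningWass}, which occupies most of Section~\ref{sec:proofWDdensity}). You correctly identify the three requirements (preservation of the density bound, exact matching of point number with $|\La_R|$ together with Neumann boundary data, and $o(R^2)$ extra cost), but asserting that ``once that is achieved'' the rest follows leaves the hardest step unproved. The paper's construction transposes the Petrache--Serfaty screening into the Benamou--Brenier framework: a mean-value argument selects a good boundary $\partial\La_r$ on which the time-integrated flux $\Phi_1=\int_0^1\mathrm{m}_t v_t\,\mathtt{d}t$ has controlled $L^2$ trace; the annulus $\La_R\setminus\La_r$ is subdivided into mesoscopic rectangles whose densities $m_i$ are corrected using the boundary flux so that each contains an integer number of unit cells; spread-out charges straddling $\partial\La_r$ must be ``completed''; and, because the Benamou--Brenier action involves one power of the density fewer than the $L^2$ norm of the total flux, one must additionally build a full time-interpolation $(\widetilde{\mm}_t,\widetilde{\bv}_t)$ in the boundary layer (not just a single compatible field) and keep $\widetilde{\mm}_t$ nonnegative and bounded above and below. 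None of this is automatic, and in particular the positivity of the interpolating measures and the extra factor of the density in passing between cost and flux are the points where a naive transposition of the Coulomb screening would fail. A complete proof must supply this construction.
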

Clearly, $B_1(x)$ can be replaced in \eqref{eq:densitybound} by $B_r(x)$ for any fixed $r$.
We prove Theorem \ref{theo:WDdensity} in Section \ref{sec:proofWDdensity}. It can be thought of as the infinite-volume version of inequalities relating the $H^{-1}$-norm of the difference of two measures to their $2$-Wasserstein distance under certain density assumptions as in \cite[Prop.~3.1]{loeper2006uniqueness}.

\subsubsection*{From a slight reinforcement of hyperuniformity to finite Coulomb energy}
In dimension $2$, we have $\Coul \implies \HU$ (via $\WD$, see \eqref{EUWDHU}) but some point processes that are only very weakly hyperuniform (in the sense that their rescaled number variance $\sigmaX(r)$ goes to $0$ very slowly as $r \to \infty$) do not have finite Coulomb energy, see Theorem \ref{theo:counter}. In order to formulate a converse statement, we introduce the following property, which is slightly stronger than hyperuniformity itself.
\begin{definition}[$\SHU$]
\label{def:SHU}
We say that a stationary point process satisfies $\SHU$ when the rescaled number variance $\sigmaX(r)$ (see \eqref{def:HU}) not only goes to $0$ as $r \to \infty$ but does so \emph{in such a way that}: 
\begin{equation}
\label{eq:SHU}
\sum_{n \geq 0} \sigmaX(2^n) < + \infty.
\end{equation}
\end{definition}
Note that if we know a priori that $r \mapsto \sigmaX(r)$ is non-increasing, then convergence of the series implies hyperuniformity in the usual sense, however we cannot rule out\footnote{\corT{One of the referees pointed out that for the stationary version of $\Z^2$, $\sigmaX$ is explicit and its main order is in fact non-monotonous in $r$.}} non-monotonous behaviors for $\sigmaX$ hence we prefer to define $\SHU$ as $\HU$ \emph{plus} said convergence.

We call condition \eqref{eq:SHU} a “slight reinforcement of hyperuniformity” because any point process such that $\sigmaX(r) \to 0$ as some negative power of $r$, or even as $(\log r)^{-c}$ with $c > 1$, satisfies $\SHU$. However, it is a fact that $\SHU$ \emph{is strictly stronger than} $\HU$ since there exists point processes such that $\lim_{r \to \infty} \sigma(r) = 0$ but $\liminf_{r \to \infty} \sigma(r) \log r > 0$, see \cite[Thm. 3]{HUPL} - amusingly, such a behavior is never discussed in the statistical physics literature about hyperuniformity, presumably because those decays are so slow that they are deemed “unphysical”. 

In dimension $2$, we find that making the ever-so-slightly stronger assumption $\SHU$ instead of $\HU$ is enough to deduce $\EU$ - in fact $\SHU$ is \emph{equivalent} to finiteness of the regularized Coulomb energy.
\begin{theorem}[$\SHU \iff \EU$]
\label{theo:SHUEU}
Let $\bbX$ be a two-dimensional stationary point process.
\begin{enumerate}
   \item If $\bbX$ satisfies $\SHU$, then it has finite regularized Coulomb energy i.e.\ satisfies $\EU$. More precisely, there exists a constant $C$ (not depending on $\bbX$) such that:
\begin{equation}
\label{Coul1Sigma}
\Coul_1(\bbX) \leq C\left(\sum_{n \geq 0} \sigmaX(2^n) + 1 \right).
\end{equation}
\item Conversely, $\EU$ implies $\SHU$.
\end{enumerate}
\end{theorem}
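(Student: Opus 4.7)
The plan is to prove the two implications separately: the converse $\EU \implies \SHU$ from a direct application of Gauss's law, and the forward $\SHU \implies \EU$ by verifying the existence criterion of \cite{sodin2023random} in terms of the dyadic variance condition.

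For the easier direction, I would start from a stationary field $\bEl$ compatible with $\bbX$ with $\E[|\bEl_\eta(0)|^2] < +\infty$. Since $\div \bEl$ equals a constant multiple of $\bbX - 1$ away from points, the divergence theorem applied on $\BR$ yields
\begin{equation*}
\bigl||\bbX \cap \BR| - |\BR|\bigr| \leq c \Bigl|\int_{\partial \BR} \bEl \cdot \nu\,d\mathcal{H}^1\Bigr| + R_\eta(r),
\end{equation*}
where the remainder $R_\eta(r)$ absorbs singular contributions of points lying within distance $\eta$ of $\partial \BR$ and is $O(1)$ in $L^2$ by stationarity. Squaring, taking expectation, and applying Cauchy--Schwarz together with stationarity, one obtains $\Var[|\bbX \cap \BR|] \lesssim r \cdot \E[|\bEl_\eta(0)|^2] + 1$, hence $\sigmaX(r) \lesssim r^{-1}$ for large $r$, so $\sum_n \sigmaX(2^n) < +\infty$.

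For the forward direction the idea is to invoke the necessary and sufficient criterion of \cite{sodin2023random}, which asserts that a stationary compatible field with finite second moment exists if and only if an integrability condition holds on the reduced structure factor of $\bbX - 1$ near frequency zero. The key steps I would carry out are: (i) rewrite this criterion as a sum over dyadic frequency annuli $|\xi| \sim 2^{-n}$; (ii) via a Fubini-type computation, match the mass of the structure factor in the $n$-th annulus with $\sigmaX(2^n)$, exploiting that in dimension $2$ the $H^{-1}$-weight $|\xi|^{-2}$ is scaled precisely to cancel the volume normalization appearing in $\sigmaX$; (iii) conclude that $\SHU$ implies the Sodin--Wennman criterion, and keep track of constants to obtain the quantitative bound \eqref{Coul1Sigma}.

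The hard part will be step (ii): dimension $2$ is critical in the sense that the variance and the Coulomb energy scale identically, so the dyadic matching is sharp and cannot absorb any logarithmic slack. A secondary difficulty is that the criterion of \cite{sodin2023random} may only be stated qualitatively, so either its proof must be inspected to extract an effective bound, or the field must be constructed directly: a natural plan B is to write $\bbX - 1 = \sum_n P_n(\bbX - 1)$ via a Littlewood--Paley-type decomposition adapted to stationary random distributions, solve $-\Delta u_n = P_n(\bbX - 1)$ at each scale, and set $\bEl := \sum_n \nabla u_n$; spectral orthogonality between scales then gives $\E[|\bEl_\eta(0)|^2] \lesssim \sum_n \sigmaX(2^n)$ directly, yielding \eqref{Coul1Sigma}.
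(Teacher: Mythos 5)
Your forward direction ($\SHU \implies \EU$) is essentially the paper's route: the paper shows $\SHU \iff \SC$, where $\SC$ is the spectral condition $\int_{|\omega|\leq 1}|\omega|^{-2}\dd\rho(\omega)<+\infty$ of \cite{sodin2023random}, precisely by the dyadic matching you describe in step (ii) (using the Fourier representation $\sigmaX(r)=\int \jr(\omega)\dd\rho(\omega)$ with $\jr(\omega)=|\omega|^{-2}\mathrm{J}_1^2(r|\omega|)$ and Bessel asymptotics on annuli $|\omega|\sim 2^{-n}$), and then imports the SWY field with spectral measure $|\omega|^{-2}\dd\rho(\omega)$. One detail your sketch glosses over: that field has $\E[|\bEl(0)|^2]=\int_{\R^2}|\omega|^{-2}\dd\rho(\omega)$, whose \emph{high-frequency} part need not be finite; the paper mollifies by $\chi$ and controls $\int_{|\omega|\geq 1}|\omega|^{-3}\dd\rho(\omega)$ via translation-boundedness of $\rho$, identifying $\chi\ast\bEl$ with the truncation $\bEl_1$. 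Your plan B would need the same fix, but this is repairable.

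The converse direction is where your proposal has a genuine error. Gauss's law plus Cauchy--Schwarz cannot give $\Var[|\bbX\cap\BR|]\lesssim r\,\E[|\bEleta(0)|^2]$. Indeed
\begin{equation*}
\Bigl|\int_{\partial\BR}\bEleta\cdot\nu\Bigr|^2\;\leq\;\mathcal{H}^1(\partial\BR)\int_{\partial\BR}|\bEleta|^2,
\qquad
\E\Bigl[\int_{\partial\BR}|\bEleta|^2\Bigr]=\mathcal{H}^1(\partial\BR)\,\E\bigl[|\bEleta(0)|^2\bigr]
\end{equation*}
by stationarity, so the product scales like $r^2$, not $r$: you recover only $\E[(|\bbX\cap\BR|-|\BR|)^2]\leq C(\Coul_\eta(\bbX)+1)r^2$, i.e.\ $\sigmaX(r)=\O(1)$ (this is exactly item 1 of Lemma \ref{lem:twoproperties}), which gives neither hyperuniformity nor summability along dyadic scales. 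Moreover the conclusion you claim, $\sigmaX(r)\lesssim r^{-1}$, would contradict the theorem itself: a class-III process with $\sigmaX(r)\asymp r^{-\alpha}$, $\alpha\in(0,1)$, satisfies $\SHU$ and hence (by part 1) $\EU$, yet is not class-I. So $\EU$ cannot imply a surface-area variance law, and no pointwise boundary-flux argument will produce the required decay. The paper instead proves $\EU\implies\SC$ by verifying the hypothesis of \cite[Thm.~5.4]{sodin2023random}, namely $\E[|\int_{\R^2}\bEl\cdot\varphi|^2]<+\infty$ for compactly supported test fields $\varphi$ (which does follow from $\E[|\bEleta(0)|^2]<\infty$ together with the second-moment bound on $|\bbX\cap\B_{r+1}|$), and then converts $\SC$ into $\SHU$ by the same dyadic Fourier computation run in the other direction. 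You need some such spectral (or otherwise cancellation-exploiting) argument here; this direction is not elementary.
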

The proof of Theorem \ref{theo:SHUEU} relies on a recent result by Sodin-Wennman-Yakir and goes by showing that $\SHU \iff \SC \iff \EU$, where $\SC$ is a “spectral condition” introduced in \cite{sodin2023random}, see Section \ref{sec:Sodin}. 
\begin{remark}
Our proof of the implication $\EU \implies \SHU$ uses results which are not explicitly written down in a quantitative way in \cite{sodin2023random}. However, an inspection of \cite{sodin2023random} together with our Lemmas~\ref{lem:equivSCSHU} and \ref{lem:SodinEnough} suggests that one could write the implication $\EU \implies \SHU$ as an effective inequality of the form:
\begin{equation*}
\sum_{n \geq 0} \sigmaX(2^n) \leq C\left(\Coul_1(\bbX) + 1 \right).
\end{equation*}
\end{remark}

\subsubsection*{Applications \corT{and open questions}}
We now mention some consequences of our results (in dimension $2$).
\begin{itemize}
   \item Since the Poisson point process is not hyperuniform, it must have infinite regularized Coulomb energy, and thus also infinite renormalized Coulomb energy in the sense of Sandier-Serfaty (see Section \ref{sec:CoulombEnergy}), which answers a question raised in \cite[Sec.~3.2]{leble2017large}. This could also be deduced directly from the results of \cite{sodin2023random}.

   \item Following the phenomenological classification of Torquato \cite{torquato2018hyperuniform}, all hyperuniform point processes of class-I (i.e.\ $\sigmaX(r) = \O(r^{-1})$) , class-II (i.e.\ $\sigmaX(r) = \O(\log r \times r^{-1})$) and class-III (i.e.\ $\sigmaX(r) = \O(r^{-\alpha})$ with $\alpha \in (0,1)$) satisfy $\SHU$ and thus have both finite regularized Coulomb energy and finite $2$-Wasserstein distance to the Lebesgue measure. In particular, they can all be written as the perturbation of a stationary lattice by stationary displacements with finite second moment - in total agreement with the phrasing of \cite{lachieze2024hyperuniformity}: \emph{“most planar hyperuniform point processes are $L^2$-perturbed lattices”}. Of course, by the triangular inequality, this also means that they are at finite  $2$-Wasserstein distance of each other.

   \item Minimizers of the $2d$ Coulomb gas free energy functional introduced in \cite{leble2017large} all have finite renormalized Coulomb energy (by definition), hence they have finite regularized Coulomb energy (which is a less stringent condition), and are thus hyperuniform - in fact, they even satisfy $\SHU$.

   Note, however, that those point processes describe the limiting behavior of objects named “empirical fields” in the statistical mechanics literature, which are defined as “averages” (over translations) of the finite $2d$ Coulomb gas. It is not at all obvious whether they coincide or not with the actual limit points (in the space of point processes) of the finite random point configuration corresponding to the Gibbs measure of a $2d$ Coulomb gas \emph{without spatial averaging} (the mere existence of those limit points is hard to prove see \cite[Cor.~1.1]{armstrong2021local} \corT{and \cite[Cor. 1.3]{thoma2024overcrowding}}). Hyperuniformity in this case has been recently proven \cite{leble2021two} - interestingly, the upper bound on the rescaled number variance given there is not enough to ensure $\SHU$ and finiteness of the Coulomb energy for those limit points is yet an open question (although a positive answer is very plausible).

   \item Hardcore point processes, having an almost sure uniform lower bound on the distance between two points (one can e.g. obtain such systems by considering Gibbs point processes with hardcore interactions), satisfy a density bound of the form \eqref{eq:densitybound}, and our Theorem \ref{theo:WDdensity} applies to them.

   \item \corT{Going beyond the \emph{uniform} density bound assumption, it would be interesting to consider the following question: if $\bbX$ satisfies $\sup_{x \in \Lambda_n} |\bbX \cap \B_1(x)| \leq C \log^{\O(1)}(n)$ almost surely for some constant $C$, does the implication $\WD \implies \Coul$ remain true? This milder assumption is satisfied by a much wider class of point processes (the Ginibre ensemble, zeroes of the Gaussian Entire Function, etc). We thank the referee for this suggestion.}
\end{itemize}

\subsection{Hyperuniformity}
\label{sec:HU}
The notion of \emph{hyperuniform} or \emph{super-homogeneous} distributions of points originated in the theoretical chemistry and statistical physics literature twenty years ago \cite{gabrielli2002glass,torquato2003local}. We refer to the more recent survey by S. Torquato \cite{torquato2018hyperuniform} for many examples of hyperuniform processes and for an overview of the considerable literature. A simple way to think of the Definition \eqref{def:HU} is to say that for a random point configuration to be hyperuniform means that its number variance (in large balls) is asymptotically much smaller than that of a Poisson point process. \\
One reason that makes hyperuniform systems interesting is that they can exhibit a form of “order within disorder”. Indeed, the smallness of the number variance expressed by \eqref{def:HU} is a form of order which is not necessarily coming from any long-range persistence of correlations within the system. For instance:
\begin{enumerate}
   \item The Poisson point process: is not hyperuniform, has absolutely no correlations, it represents “disorder”.
   \item The stationary lattice: is “as hyperuniform as it gets”, possesses strong correlations, it represents “order”.
   \item The Ginibre ensemble (a two-dimensional point process arising from non-Hermitian random matrix theory \cite{ginibre1965statistical}): is “as hyperuniform” as a lattice and yet has a (truncated) $2$-point correlation function that decays faster than any exponential, it represents “order within disorder”.
\end{enumerate}

\subsection{Wasserstein distances for stationary point processes}
\label{sec:DefWass}
\newcommand{\PZ}{\mathsf{P}^{\Z^\d}}
\newcommand{\WWW}{\mathrm{W}}
\newcommand{\w}{\mathsf{w}}
\newcommand{\Action}{\mathcal{A}}
\renewcommand{\rm}{\mathrm{m}}
\newcommand{\mm}{\mathbf{m}}

We start by a brief reminder on some notions of “finite-volume” optimal transport that will be useful for us. This is by now a classical topic for which there exists numerous expository texts, e.g.\ \cite{Sa15, Villani2, Figalli_2023}.  We then recall the definition (and some properties) of Wasserstein distances in \emph{infinite-volume} for stationary point processes as recently developed in \cite{erbar2023optimal}.

\subsubsection*{Wasserstein distance in finite volume}
Let $\mm_0, \mm_1$ be two measures on $\Rd$ with the same finite mass. We denote by $\Cpl(\mm_0, \mm_1)$ the set of all \emph{couplings} between $\mm_0$ and $\mm_1$, i.e.\ the set of all  measures on $\Rd \times \Rd$ having $\mm_0, \mm_1$ as marginals. We recall that for  $p \in [1, + \infty)$, the “usual” $p$-Wasserstein distance between $\mm_0$ and $\mm_1$ is defined as:
\begin{equation}
\label{def:WWW}
\WWW_p(\mm_0, \mm_1) := \left(\inf_{\Pi \in \Cpl(\mm_0, \mm_1)} \int_{\Rd \times \Rd} |x-y|^p \dd \Pi(x,y)\right)^{1/p}.
\end{equation}
We will mostly deal here with the $2$-Wasserstein distance, associated to the “quadratic cost” $|x-y|^2$.

\subsubsection*{The Benamou-Brenier formulation of optimal transport}
Let $\La \subset \Rd$ be an hypercube (or in fact any convex set, or the whole $\Rd$) and let $\mm_0$ and $\mm_1$ be two probability measures on $\La$ possessing bounded densities $\rm_0, \rm_1$ with respect to the Lebesgue measure. We call “an admissible pair” any family $(\mm_t, v_t)_{t \in [0,1]}$ (indexed by the “time” parameter $t$), where $t \mapsto \mm_t$ is an absolutely continuous family of probability measures on $\La$ interpolating between $\mm_0$ and $\mm_1$, with bounded densities $t \mapsto \rm_t$, and $t \mapsto v_t$ is a measurable family of vector fields on $\Rd$ (the \emph{velocity fields}), such that:
\begin{enumerate}
   \item The pair $(\mm_t, v_t)$ solves the \emph{continuity equation}
\begin{equation}\label{continuity}
\frac{\dd}{\dd t} \rm_t + \div(\rm_t v_t) = 0 
\end{equation}
in the sense of distributions, i.e.\ for all smooth compactly supported test function $f : (0,1) \times \Rd \to \R$ we have:
\begin{equation*}
\int_{0}^1 \left(\int_{\Rd} \partial_t f(t, x) \rm_t(x) \dd x \right) \dd t + \int_{0}^1 \left(\int_{\Rd} \nabla_x f(t, x) \cdot v_t(x) \rm_t(x) \dd x \right) \dd t = 0.
\end{equation*}
\item The Neumann condition $\rm_t v_t \cdot \vec{n} = 0$ is satisfied on $\partial \La$ for all $t \in [0,1]$.
\end{enumerate}
The \emph{action} of such a pair is then defined by:
\begin{equation}
\label{action}
\Action\left((\mm_t, v_t)_{t} \right) := \int_{0}^1 \left(\int_{\La} |v_t|^2 \dd \mm_t\right) \dd t.
\end{equation}
The celebrated Benamou-Brenier formula of \cite{benamou2000computational} (see \cite[Sec.\ 5.3]{Sa15} for a presentation) asserts that the Wasserstein distance between $\mm_0$ and $\mm_1$ can be achieved as the infimum of the action over admissible pairs:
\begin{equation}
\label{eq:BenamouBrenier}
\WWW^2_2(\mm_0, \mm_1) = \inf_{(\mm_t, v_t)_{t}  \text{ admissible}} \Action\left((\mm_t, v_t)_{t} \right) .
\end{equation}
It is known (\cite[Corollary 19.5]{Villani2}) that for instance given second moments of $\mm_0, \mm_1$ the infimum is attained and the minimizer satisfy the following density bound :
\begin{equation}
\label{eq:uniform_dens_bound}
\sup_{t \in [0,1]} \|\rm_t\|_{L^\infty} \leq \max\left( \|\rm_0\|_\infty, \|\rm_1\|_{\infty} \right).
\end{equation}
Moreover, the continuity equation and Benamou-Brenier formula localise to convex subsets of $\Lambda$ and $m_tv_t$ admit internal (and external) traces on $\partial B_r$ (for $r > 0$) such that we can safely consider $m_tv_t$ on $\partial B_r$, see \cite[Sec.\ 2.3]{GoHuOt21}.

\subsubsection*{Optimal transport and Wasserstein distances for stationary point processes}

In order to define a notion of Wasserstein distance between invariant point processes or, more generally, invariant random measures, we introduce suitable cost functions, resembling a Wasserstein distance themselves: 

\begin{itemize}
   \item For two Radon measures $\mathrm{M}_0,\mathrm{M}_1$ we let $\Cpl(\mathrm{M}_0,\mathrm{M}_1)$ be the set of all Radon measures on $\Rd\times\Rd$ with marginals $\mathrm{M}_0$ and $\mathrm{M}_1$. Then we define the cost function $\w_p$ as a “Wasserstein cost per unit volume” between $M_0$ and $M_1$:
\begin{equation}
\label{wpperunit}
\w_p(\mathrm{M}_0,\mathrm{M}_1) := \left(\inf_{q \in \Cpl(\mathrm{M}_0,\mathrm{M}_1)} \limsup_{n \to \infty} \frac{1}{n^\d} \int_{\La_n \times \R^2} |x-y|^p \dd q(x,y) \right)^{{\frac{1}{p}}},
\end{equation}
\corT{where $\La_n := \left[-\frac{n}{2}, \frac{n}{2}\right]^\d $.}
\item For laws $\mathsf{P}_0,\mathsf{P}_1$ of shift-invariant random measures, i.e.\ stationary probability measures on the space of Radon measures, we define (as in \cite{erbar2023optimal}) their $p$-Wasserstein distance by
\begin{equation}
\label{eq:Wass_p_def}
\Ww_p(\mathsf{P}_0, \mathsf{P}_1) := \left(\inf_{Q\in\mathsf{Cpl}_s(\mathsf{P}_0,\mathsf{P}_1)} \int \w^{p}_p(\mathrm{M}_0,\mathrm{M}_1)\ Q(d\mathrm{M}_0,d\mathrm{M}_1) \right)^\frac{1}{p},
\end{equation}
where $\mathsf{Cpl}_s(\mathsf{P}_0,\mathsf{P}_1)$ denotes the set of all couplings between $\mathsf{P}_0$ and $\mathsf{P}_1$ which are stationary under the diagonal action of $\Rd$, i.e.\ $Q=Q\circ(\theta_z,\theta_z)^{-1}$ with $(\theta_z\mathrm{M})(A)=\mathrm{M}(A+z)$ for all Radon measures $\mathrm M$, Borel sets $A\subset \Rd$ and $z\in\Rd$.
\end{itemize}

We are particularly interested in the distance between (the law of a) random point configuration $\bbX$, or of its “spread-out” version $\bbX_\eta$ (see \eqref{def:deta}) to either the Lebesgue measure $\Leb$ (seen as a deterministic Radon measure) or to the “stationary lattice” $\PZ$ defined as (the law of) $\sum_{x\in \mathbb{Z}^\d} \delta_{x+\tau}$ (with a $[0,1]^\d$-uniform random variable $\tau$). We will often make a slight abuse of notation and write $\Ww_p(\bbX, \Leb)$ or $\Ww_p(\bbX, \PZ)$ although $\bbX$ is a random measure, $\Leb$ is a deterministic measure, and $\PZ$ \emph{the law of} a random measure. The definition of $\Ww_p(\bbX,\Leb)$ reduces to:
\begin{equation}
\label{WassLeb}
\Ww_p(\bbX,\Leb) = \E \left[ \w^p_p(\bbX, \Leb) \right]^{1/p},
\end{equation}
with $\w_p$ the $p$-cost “per unit volume” between a realization of $\bbX$ and the Lebesgue measure as in \eqref{wpperunit}.
Let us make two additional comments:
\begin{itemize}
   \item The perturbed lattice and the Lebesgue measure are at finite $p$-Wasserstein distance of each other for all $p$ (in fact for $p = + \infty$) hence, by the triangular inequality, it is equivalent to be at finite distance of either.
   \item Thanks to the alternative representation of the Wasserstein distance between point processes as the cost of optimal \emph{matchings} \corT{given in} \cite[Prop. 2.11.]{erbar2023optimal}, the condition $\Ww_p(\bbX,\PZ) < + \infty$ is equivalent to the fact that $\bbX$ can be written as a \emph{perturbed lattice} i.e. as the point process $\bbX = \sum_{x \in \Zd} \delta_{x + v_x}$ where the displacements $(v_x)_{x \in \Zd}$ are random perturbations whose law is invariant under shifts in $\Zd$ and such that \emph{the $p$-th moment of $v_0$ is finite.}
\end{itemize}

\paragraph{Summary.}
We end this section with a quick summary of the various Wasserstein distances, the corresponding notation, and some relevant properties from \cite{erbar2023optimal}:
\begin{itemize}
   \item $\WWW_p$ denotes the usual $p$-Wasserstein distance between two finite measures on $\R^n$ (see \eqref{def:WWW}).
   \item $\w_p$ is the $p$-Wasserstein distance \emph{per unit volume} between two point configurations, or more generally infinite Radon measures (see \eqref{wpperunit}).
   \item $\Ww_p$ is the $p$-Wasserstein distance between (the law of) two random point configurations (or two random Radom measures), defined in \eqref{eq:Wass_p_def}.
   \item { If $\Ww_p(\mathsf{P}_0, \mathsf{P}_1)<\infty$, the infimum in \eqref{eq:Wass_p_def} is attained \cite[Proposition 2.10]{erbar2023optimal}. Moreover:
   \begin{itemize}
   \item  $\Ww_p$ defines a geodesic metric between invariant random measures of equal intensity \cite[Lemma 2.13, Proposition 2.15]{erbar2023optimal}
   \item  $\Ww_p$ is lower semicontinuous w.r.t. weak convergence (where the set of Radon measures over $\R^d$ is equipped with the vague topology) \cite[Proposition 2.10]{erbar2023optimal}.
   \end{itemize}}
\end{itemize}

\begin{remark}
Connections between the usual $2$-Wasserstein distance $\WWW_2$ and the Coulomb energy (which is basically a $H^{-1}$ norm) have been studied before. A finite-volume $\WWW_2$-Coulomb inequality is given \cite[Thm. 1]{steinerberger2021wasserstein} for points on a compact manifold - it is sharp for $\d \geq 3$, but off by some logarithmic factor in dimension $\d =2$.
\end{remark}

\subsection{Infinite-volume Coulomb energy}
\newcommand{\g}{\mathsf{g}}
\newcommand{\bXN}{\bX_N}
\newcommand{\XN}{\mathrm{X}_N}
\newcommand{\HN}{\mathrm{H}_N}
\newcommand{\EXN}{\El}
\label{sec:CoulombEnergy}

We let $\g$ be (a certain multiple of) the Coulomb kernel in dimension $\d$, namely:
\begin{equation*}
\g(x) := \begin{cases}
- \log |x| & (\d = 2)\\
|x|^{-(\d-2)} & (\d \geq 3)
\end{cases}.
\end{equation*}

\subsubsection*{From finite to infinite systems}
We recall the basic setup for (one-component) Coulomb systems in finite volume. Let $N \geq 1$ and let $\XN = (x_1, \dots, x_N)$ be a $N$-tuple of points in the box $\La_N := \left[-\frac{N}{2}, \frac{N}{2}\right]^\d$. The Coulomb energy of $\XN$ can be defined as:
\begin{equation}
\label{finiteHN}
\HN(\XN) := \hal \iint_{(x,y) \in \La_N \times \La_N, x \neq y} \g(x-y) \dd \left( \bXN - \Leb\right)(x) \dd \left( \bXN - \Leb \right)(y),
\end{equation}
with $\bXN := \sum_{i=1}^N \delta_{x_i}$ a finite point configuration. The usual physical interpretation of \eqref{finiteHN} is that the $N$ point charges $x_1, \dots, x_N$, all living in $\La_N$, interact both with each other and with a “neutralizing background” given by the Lebesgue measure on the box $\La_N$. The energy $\HN$ is bounded below by a constant times $N$, but it is not bounded above due to the singularity of $\g$.

The study of systems with pairwise Coulomb interactions is an old topic. However, due to the long-range nature of the Coulomb kernel, properly defining the Coulomb energy (per unit volume) for an infinite point configuration, or for stationary point processes is challenging. 

\subsubsection*{Infinite systems}
The notion of a “renormalized energy” was introduced by Sandier-Serfaty \cite{sandier2012ginzburg}, and applied to two-dimensional Coulomb systems in \cite{sandier20152d}, with various extensions (see in particular \cite{rougerie2016higher} for higher dimensional Coulomb gases and \cite{petrache2017next} for an generalization to Riesz gases). It provides a/the framework to work with infinite Coulomb systems. 

Before presenting this formalism, let us briefly evoke what a “naive” approach could look like. In view of the finite interaction energy \eqref{finiteHN} and of standard practice in statistical mechanics (for \emph{short-range} interactions!) one could (try to) define the Coulomb energy per unit volume of a point configuration $\bX$ as:
\begin{equation}
\label{naive1}
\lim_{N \to \infty} \frac{1}{|\La_N|} \ \hal \iint_{(x,y) \in \La_N \times \La_N, x \neq y} \g(x-y) \dd \left( \bX - \Leb\right)(x) \dd \left( \bX - \Leb \right)(y),
\end{equation}
or perhaps more accurately as (note the change in the domain of integration):
\begin{equation}
\label{naive2}
\lim_{N \to \infty} \frac{1}{|\La_N|} \ \hal \iint_{(x,y) \in \La_N \times \Rd, x \neq y} \g(x-y) \dd \left( \bX - \Leb\right)(x) \dd \left( \bX - \Leb \right)(y).
\end{equation}
The second expression is in fact closer to the truth, as it takes into account the effect of the whole configuration on the points in $\La_N$, but precisely because of this, and the long-range nature of the Coulomb kernel, the finiteness of this expression for fixed $N$ is very much unclear and might only be obtained through a careful control of the overall repartition of charges. Ultimately, it is simply not possible in general to rigorously derive “infinite-volume” energies of the type \eqref{naive2} from the finite $N$ energy \eqref{finiteHN}.

The general spirit of the works by Serfaty et al. can be phrased as an “electric approach”. Instead of working directly with the pairwise interaction energy $\g(x-y)$ between points as in \eqref{naive1}, \eqref{naive2}, one puts the emphasis on “electric fields”, which are intrinsically global objects whose energy is nonetheless easy to localize. Near each point of the configuration, those fields have a singularity which needs to be taken care of, hence the adjective “renormalized”.

In the rest of Section \ref{sec:CoulombEnergy}, we recall the main lines of the “electric approach”, our goal being to define the “regularized Coulomb energy” $\Coul_\eta(\bbX)$ ($\eta \in (0,1)$) for a stationary point process $\bbX$, which is presented in Definition \ref{def:RegularizedEnergy}.

\subsubsection*{Electric approach in finite volume}
The starting point of the “electric” approach to \emph{infinite} Coulomb systems is the following consideration valid for \emph{finite} systems. Let us introduce the electric potential $\Phi$ and the \emph{true electric field}~$\EXN$ generated by $\XN$ as:
\begin{equation}
\label{eq:true}
\Phi : x \mapsto \int_{y \in \La_N} \g(x-y)  \dd \left( \bXN - \Leb \right)(y), \quad \EXN(x) := \nabla \Phi = \int_{y \in \La_N} \nabla \g(x-y)  \dd \left( \bXN - \Leb \right)(y).
\end{equation}
Note that $\Phi$ is a scalar field, while $\EXN$ is a (gradient) vector field. The “true electric field” satisfies the distributional identity:
\begin{equation}
\label{eq:divEXN}
- \div(\EXN) = \cd \left( \bXN - \Leb \right), \text{ with } \EXN(x) \to 0  \text{ as } |x| \to \infty
\end{equation}
for some constant $\cd$ depending on $\d$ (the value is $2\pi$ for $\d = 2$).
Returning to \eqref{finiteHN}, integrating by parts, and neglecting the condition $\{x \neq y\}$ in the integral, one would get:
\begin{equation}
\label{Heuris}
\HN(\XN) \approx - \frac{1}{2 \cd} \int_{\Rd} \Phi\ \div(\EXN) \approx \frac{1}{2 \cd} \int_{\Rd} |\EXN|^2.
\end{equation}
The heuristic identity \eqref{Heuris} does not make sense as such because $\El$ fails to be in $L^2$ due to the point charges self-interactions (which are avoided in \eqref{finiteHN} by imposing the condition $\{x \neq y\}$). However, \eqref{Heuris} points to the fact (which is old and common knowledge in physics) that the Coulomb interaction energy can be phrased in terms of the integral of an “electric” energy density $|\EXN|^2$, where $\EXN$ is the right “electric field” satisfying \eqref{eq:divEXN}.

\subsubsection*{Electric fields: global, local, screened}
We refer to vector fields in $\cap_{p \in [1, 2)} L^p_{loc}(\Rd, \Rd)$ as (global) “electric fields”.  If $\bX$ is a point configuration in $\Rd$, and $\El$ is an electric field, we say that $\El$ is compatible with $\bX$ when the following identity holds:
\begin{equation}
\label{def:compatible}
- \div(\El) = \cd \left(\bX - \Leb \right).
\end{equation}
We also work with \emph{local} electric fields: if $\La$ is a square in $\Rd$ and $\bX$ a point configuration in $\La$, we say that $\El \in \cap_{p \in [1, 2)} L^p_{loc}(\La, \Rd)$ is a (local) electric field compatible with $\bX$ when \eqref{def:compatible} holds in $\La$. To be clear, by \eqref{def:compatible} we mean that
\begin{equation*}
\int \varphi(x) \dd \left(\bX - \Leb\right)(x) = - \frac{1}{\cd} \int \nabla \varphi \cdot \El
\end{equation*}
for all $\varphi \in C^{\infty}_c(\Rd)$ (global electric fields), or for all $\varphi \in C^{\infty}_c(\La)$ (local electric fields in $\La$). We say that a local electric field $\El$ is \emph{screened} when it has Neumann boundary conditions i.e.\ its normal component satisfies 
\begin{equation}
\label{def:screened}
\El \cdot \vec{n} = 0 \text{ along $\partial \La$}.
\end{equation}
\subparagraph{The importance of screened fields}
Screened fields are of outmost importance because they can be glued together in the following sense: if $\La^1, \La^2$ are two squares with one side in common, $\bX^1, \bX^2$ are two point configurations in $\La^1, \La^2$ and $\El^1, \El^2$ are two local electric fields compatible with $\bX^1, \bX^2$, then defining:
\begin{equation*}
\bX^t := \bX^1 + \bX^2, \quad \El^t := \El^1 \1_{\La_1} + \El^2 \1_{\La_2} 
\end{equation*}
yields a “total” point configuration and a “total” electric field, but \emph{in general, $\El^t$ is not compatible with $\bX^t$}. However, if $\El^1, \El^2$ are both screened, or more generally if \emph{their normal components agree on the common side of $\La_1, \La_2$}, then $\El^t$ is compatible with $\bX^t$.

\subparagraph{The role of abstract fields}
An important remark is that we deal with “abstract” fields $\El$, not necessarily of the form (cf.\ \eqref{eq:true}):
\begin{equation*}
\El(x) \overset{?}{=} \int_{\Rd} \nabla \g(x-y) \dd \left( \bX - \Leb\right)(y),
\end{equation*}
which could be a candidate to solve \eqref{def:compatible}, but whose convergence is unclear (note that $\nabla \g(x) \propto \frac{1}{|x|^{\dd - 1}}$ is far from being integrable at infinity). Considering the whole family of compatible electric fields is a fundamental feature of the approach by Sandier-Serfaty. The price to pay is that in general we do not know “who $\El$ is”, besides that it solves \eqref{def:compatible}.

\newcommand{\deta}{\delta^{(\eta)}}
\newcommand{\ceta}{\chi_\eta}
\newcommand{\bXeta}{\bX_\eta}
\newcommand{\bbXeta}{\bbX_\eta}

\newcommand{\Eleta}{\El_{\eta}}
\newcommand{\bEleta}{\bEl_{\eta}}

\newcommand{\feta}{\mathsf{f}_\eta}

\subsubsection*{Spreading out charges and regularizing point configurations.}
To handle the singularity of the electric field near a point charge, one introduces a short-scale regularization, or “truncation” of the field, and a corresponding operation of “spreading out” the point charges. Let us start with the latter, as it is simpler to describe. The usual way to proceed is as follows (see \cite[Sec. 2.1]{rougerie2016higher}): 
\begin{enumerate}
   \item Take some radial non-negative “bump function” $\chi$ supported on the unit ball, with mass $1$, and for $\eta > 0$ let 
   \begin{equation*}
\ceta := \frac{1}{\eta^\d} \chi\left( \frac{\cdot}{\eta} \right).
   \end{equation*}
   In general, the choice of the bump function $\chi$ is not really important and does not affect the limiting objects or the values of the limiting quantities as $\eta \to 0$. A common choice is to take $\chi = \1_{\B_1}$, which would work for us here, but we may also take $\chi$ to be smooth. We fix such a $\chi$ for the rest of the paper.
  
   \item If $x \in \Rd$, define $\deta_x$ as a Dirac mass “spread-out” according to $\ceta$, namely as the measure with density $\ceta(\cdot - x)$. If we think of a Dirac mass $\delta_x$ as a point charge placed at $x$, then $\deta_x$ is a “spread-out” charge at $x$. 

   This operation goes back at least to Onsager \cite{onsager1939electrostatic}, see e.g.\ \cite[Sec. 9.7]{lieb2001analysis} for a study of its properties in the context of Coulomb energies. In short: 
\begin{enumerate}
   \item By Newton's theorem, the field generated by a spread-out charge coincides with the field generated by the original charge outside the support of the “spread” (here $\B_{\eta}$).
   \item Spreading out all charges slightly decreases  the pairwise interaction energy between them. 
   \item After this operation, a single charge self-interaction becomes \emph{finite} - but it blows up proportionally to $\g(\eta)$ as $\eta \to 0$, as one could expect.
\end{enumerate}
   \item If $\bX$ is a point configuration, replacing each Dirac mass $\delta_x$ by its “spread-out” version $\deta_x$ for $x \in \bX$ yields a “regularized” version of $\bX$, denoted by $\bXeta$. 
\end{enumerate}
Formally, we have defined two unambiguous operations:
\begin{equation}
\label{def:deta}
\delta_x \to \deta_x := \ceta \ast \delta_x \text{ (for $x \in \Rd$)}, \quad \bX \to \bXeta := \sum_{x \in \bX} \deta_x.
\end{equation}
Let us emphasize that $\bX \to \bXeta$ is an operation at the level of Radon measures on $\Rd$. The image of a point configuration $\bX$ is no longer purely atomic, it is a locally finite measure with locally bounded density with respect to the Lebesgue measure (the density of one regularized Dirac is bounded by $\|\chi\|_{\infty} \eta^{-\corT{\d}}$ but the density of $\bXeta$ depends also on the number of points per unit volume).

\paragraph{Truncation of the field.} 
We now study a corresponding operation at the level of electric fields. With $\chi$ chosen as before and for $\eta > 0$, let $\feta$ be the mean-zero solution to:
\begin{equation}
\label{def:feta}
- \Delta \feta = \cd \left(\deta_0 - \delta_0\right), \quad \feta \equiv 0 \text{ outside } \B_\eta,
\end{equation}
\corT{which is given by $\feta : x \mapsto \max\left(0, \log \frac{\eta}{|x|} \right)$.} If $\El$ is an electric field (global or local) compatible with $\bX$, we define the “truncated field” $\Eleta$ as:
\begin{equation}
\label{def:Eleta}
\Eleta := \El + \sum_{x \in \bX} \nabla \feta(x - \cdot).
\end{equation}
\begin{claim}[Properties of the truncated field] It is straightforward that:
\label{claim:propertiesOF}
\begin{enumerate}
   \item If $\El$ is compatible with $\bX$ in the sense of \eqref{def:compatible}, then $\Eleta$ is compatible with $\bXeta$ in the sense:
\begin{equation}
\label{divEleta}
- \div(\Eleta) = \cd \left(\bXeta - \Leb \right).
\end{equation}
   \item If $\El$ is a gradient vector field, then so is $\Eleta$.
   \item (Local version.) If $\El$ is compatible with $\bX$ in $\La$ and $\min_{x \in \bX} \dist(x, \partial \Omega) > \eta$, then \eqref{divEleta} holds in~$\La$. Moreover, if $\El$ is “screened”, then so is $\Eleta$.
\end{enumerate}
\end{claim}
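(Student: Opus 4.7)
The plan is to verify each item distributionally, using \eqref{def:feta} and the chain rule; all three assertions are essentially bookkeeping computations.

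For item (1), I would test $-\div(\Eleta)$ against an arbitrary $\varphi \in C^\infty_c(\Rd)$. Interpreting each correction term as $\nabla_y[\feta(x-y)]$, so that it is literally a gradient of a scalar, the chain rule gives $-\Delta_y[\feta(x-y)] = -(\Delta\feta)(x-y) = \cd(\deta_0 - \delta_0)(x-y) = \cd(\deta_x - \delta_x)(y)$, where the last equality uses that the mollifier $\ceta$ is even so that $\deta_0$ translated by $x$ is $\deta_x$. Summing over $x \in \bX$ and combining with the assumed identity $-\div(\El) = \cd(\bX - \Leb)$ yields $-\div(\Eleta) = \cd(\bXeta - \Leb)$, as desired. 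Local summability of the series of corrections is not an issue since $\feta$ is supported in $\B_\eta(0)$, so the sum is in fact finite on any bounded region.

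Item (2) is then immediate: if $\El = \nabla \Phi$, each correction $\nabla_y[\feta(x-y)]$ is manifestly the gradient of the scalar $\feta(x-\cdot)$, and by the support argument above $\sum_{x \in \bX} \feta(x-\cdot)$ defines a locally integrable scalar function, so $\Eleta = \nabla\bigl(\Phi + \sum_{x \in \bX} \feta(x-\cdot)\bigr)$ is itself a gradient vector field.

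For item (3), the key observation is again that each correction $\nabla_y[\feta(x-y)]$ is supported in the ball $\B_\eta(x)$. Under the hypothesis $\min_{x \in \bX} \dist(x, \partial \La) > \eta$, all of these balls lie strictly inside $\La$, so the distributional computation of item (1) transports verbatim to test functions in $C^\infty_c(\La)$, with no boundary contributions. For the screening statement, the same support argument shows that $\sum_{x \in \bX} \nabla_y[\feta(x-y)]$ vanishes identically in an $\eta$-neighbourhood of $\partial \La$, so if $\El \cdot \vec{n} = 0$ on $\partial\La$ then also $\Eleta \cdot \vec{n} = 0$ there. There is no genuine obstacle; the one mildly delicate point is to keep the distributional manipulations rigorous by pairing against test functions throughout, since $\El$ is only assumed to lie in $\cap_{p \in [1,2)} L^p_{loc}$ and has no pointwise meaning near the point charges.
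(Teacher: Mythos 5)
Your verification is correct, and it is exactly the routine distributional computation the paper has in mind when it declares the claim "straightforward" (no proof is given in the text). Your explicit attention to the reading of $\nabla\feta(x-\cdot)$ is harmless here since $\feta$ is radial, so the two possible interpretations of the gradient coincide, and the support argument for the local/screened version is the right one.
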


\newcommand{\hEleta}{\check{\El}_{\eta}}
\newcommand{\rEleta}{\tilde{\El}_{\eta}}
\newcommand{\bhEleta}{\check{\El}_{\eta}^\sbullet}
\newcommand{\brEleta}{\tilde{\El}_{\eta}^\sbullet}

\newcommand{\hEl}{\check{\El}}
\newcommand{\bhEl}{\check{\El}^\sbullet}
\newcommand{\Pelec}{\mathrm{P}^{\mathrm{elec}}}
\newcommand{\Couleta}{\Coul_{\eta}}
\newcommand{\hCouleta}{\Coul_{\hat{\eta}}}

\subsubsection*{Stationary random electric fields}
We call a random electric field $\bEl$ stationary when its law is invariant under translations $\El \to \El(\cdot - t)$ for all $t \in \Rd$. We say that a random electric field $\bEl$ is compatible with a point process $\bbX$ when the random measure $\frac{1}{\cd} \div(\bEl) + \Leb$ and $\bbX$ have the same distribution. Note however that if $\bbX$ is stationary, it does not imply that so is $\bEl$.

\subsubsection*{Regularized Coulomb energy}
Finally, we may define the notion that will be relevant for us, namely “finite regularized Coulomb energy”.
\begin{definition}
\label{def:RegularizedEnergy}
Let $\bbX$ be a stationary point process. We say that it has \emph{finite regularized Coulomb energy} when there exists $\eta \in (0,1]$ and a \emph{stationary} random electric field $\bEl$ compatible with $\bbX$ such that, using the truncation/regularization presented in \eqref{def:Eleta}: 
\begin{equation}
\label{eq:FiniteRegul}
\E\left[|\bEleta(0)|^2\right] < + \infty.
\end{equation}
Since stationary, compatible electric fields are not unique, we define:
\begin{equation}
\label{def:Couleta}
\Coul_\eta(\bbX) := \inf_{\bEl \text{ stat., comp. with $\bbX$}} \E\left[|\bEleta(0)|^2\right].
\end{equation}
\end{definition}

\begin{lemma}[Two properties of the regularized energy]
\label{lem:twoproperties}
The following holds:
\begin{enumerate}
   \item If $\Coul_\eta(\bbX)$ is finite for some $\eta \in (0,1]$, then the number of points of $\bbX$ in $\BR$ has a finite second moment for all $r > 0$ and more precisely:
   \begin{equation}
   \label{discrenergy}
   \E\left[ \left(|\bbX \cap \BR| - |\BR|\right)^2 \right] \leq C  \left(\Coul_\eta(\bbX) + 1 \right) r^2.
   \end{equation}
   \item If $\Coul_1(\bbX)$ is finite, then so is $\Coul_\eta(\bbX)$ for all $\eta \in (0,1)$
   and we have:
\begin{equation}
\label{CouletaCoul1}
\Coul_\eta(\bbX) \leq C_\eta \left(\Coul_1(\bbX) + 1 \right), \quad \corT{\Coul_1(\bbX) \leq \Coul_\eta + C \eta}.
\end{equation}
\end{enumerate}
\end{lemma}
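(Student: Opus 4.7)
The plan is to prove the two parts in sequence, using part (1) at $r = 1$ inside part (2) to control the cross-term. Both arguments rest on the compatibility identity $-\div(\bEleta) = \cd(\bbXeta - \Leb)$, combined with Cauchy--Schwarz and the stationarity of $\bEleta$ (which follows since $\bEleta$ is a translation-equivariant functional of the stationary $\bEl$ through $\bbX = \frac{1}{\cd}\div(\bEl) + \Leb$).

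For part (1), I fix a stationary compatible field $\bEl$ nearly attaining $\Coul_\eta(\bbX)$ and choose smooth radial cutoffs $\varphi^-_r, \varphi^+_r$ with gradients bounded by a universal constant and supports in an $O(1)$-annulus around $\partial \BR$: say $\varphi^-_r \equiv 1$ on $\B_{r-2}$, $\supp \varphi^-_r \subset \B_{r-1}$ and $\varphi^+_r \equiv 1$ on $\B_{r+1}$, $\supp \varphi^+_r \subset \B_{r+2}$. Because $\eta \leq 1$, the convolutions $\varphi^\pm_r * \ceta$ sandwich $\1_{\BR}$ pointwise; combined with the identity $\int(\varphi^\pm_r * \ceta)\dd\bbX = \int\varphi^\pm_r\dd\bbXeta$ and the compatibility identity, this sandwiches $|\bbX \cap \BR|$ between the two quantities $\int \varphi^\pm_r \dd \Leb - \frac{1}{\cd}\int \nabla \varphi^\pm_r \cdot \bEleta$. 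Each Lebesgue term differs from $|\BR|$ by $O(r)$ (the annular volume in $\d = 2$). Squaring, applying Cauchy--Schwarz in the field integral, and taking expectation, stationarity gives
\begin{equation*}
\E\left[ \left(|\bbX \cap \BR| - |\BR|\right)^2 \right] \leq Cr^2 + C \|\nabla\varphi^\pm_r\|_{L^2}^2 \cdot |\supp \nabla\varphi^\pm_r| \cdot \E[|\bEleta(0)|^2].
\end{equation*}
Since both $\|\nabla\varphi^\pm_r\|_{L^2}^2$ and $|\supp \nabla\varphi^\pm_r|$ are $O(r)$ in $\d = 2$, this yields the claimed bound $Cr^2(\Coul_\eta(\bbX) + 1)$ (the small-$r$ regime is handled by trivially shrinking the cutoffs).

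For part (2), any stationary compatible $\bEl$ gives two truncations linked by $\bEleta - \bEl_1 = \sum_{x \in \bbX} \nabla(\feta - \mathsf{f}_1)(x - \cdot)$. A direct inspection of \eqref{def:feta} shows that $\nabla(\feta - \mathsf{f}_1)$ is supported in $\B_1$ and uniformly bounded by some $C_\eta$: writing $\feta = U^\eta - \g$ on $\B_\eta$ (with $U^\eta$ the bounded potential of the spread charge $\deta_0$) and analogously for $\mathsf{f}_1$, the Coulomb singularity $\nabla\g$ cancels in the difference, leaving only bounded spread-charge gradients. Hence pointwise $|\bEleta(0) - \bEl_1(0)| \leq C_\eta |\bbX \cap \B_1|$, and therefore $\E[|\bEleta(0)|^2] \leq 2\E[|\bEl_1(0)|^2] + 2C_\eta^2 \E[|\bbX \cap \B_1|^2]$. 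Picking $\bEl$ near-optimal for $\Coul_1$ bounds the first term, while applying part (1) at $\eta = 1$ and $r = 1$ (together with intensity-one normalisation $\E[|\bbX \cap \B_1|] = |\B_1|$) bounds the second by $C(\Coul_1(\bbX) + 1)$. Taking the infimum over $\bEl$ yields $\Coul_\eta(\bbX) \leq C_\eta(\Coul_1(\bbX) + 1)$. The only non-routine step is the $L^\infty$ bound on $\nabla(\feta - \mathsf{f}_1)$ near the origin, which hinges on the exact cancellation of the Coulomb singularities between the two truncation kernels; everything else is integration by parts, Cauchy--Schwarz, stationarity, and elementary algebra.
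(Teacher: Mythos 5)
Your proposal is correct. For item 2 it is essentially the paper's argument verbatim: the difference $\bEl_1(0)-\bEl_\eta(0)$ is bounded pointwise by the number of nearby points times the sup-norm of the (singularity-cancelled) difference of truncation kernels, and the resulting second moment of $|\bbX\cap\B_1|$ is controlled by item 1 applied with $\eta=1$ at a fixed small radius. For item 1 the paper simply cites \cite[Sec.~8.5]{leble2017large}, whereas you supply the standard self-contained discrepancy estimate (sandwiching $\1_{\BR}$ by mollified cutoffs, integrating the compatibility identity by parts, then Cauchy--Schwarz plus stationarity); this is the same underlying argument as the cited reference and yields the stated $r^2$ in dimension $2$, so the two proofs agree in substance.
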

\begin{proof}[Proof of Lemma \ref{lem:twoproperties}]
The first item (control of the discrepancies by the Coulomb energy) was proven in \cite[Sec. 8.5]{leble2017large}. \corT{For convenience, we sketch the argument - which is essentially the key for one of the counter-examples - in Section \ref{WDnotEU}.}

Now, in view of \eqref{def:Eleta}, we can control the difference $\bEl_{1}(0) - \bEl_{\eta}(0)$ almost surely by:
\begin{equation*}
\left|\bEl_{1}(0) - \bEl_{\eta}(0) \right| \leq |\bbX \cap \B_2| \times \left| \feta - \mathsf{f}_1 \right|_{\infty}, 
\end{equation*}
and thus $\E[|\bEl_{\eta}(0)|^2] \leq C \E[|\bEl_{1}(0)|^2] + C_\eta \E[|\bX \cap \B_2|^2]$, which yields \eqref{CouletaCoul1} using \eqref{discrenergy} (for $\eta =1, r = 2$).

\corT{The last point follows from the fact that $\Coul_\eta$ is “almost” decreasing with respect to the truncation $\eta$, see \cite[Prop. 2.4.]{petrache2017next}.}
\end{proof}

\subsubsection*{Renormalized energy}
For completeness, we conclude by recalling the definition of the “renormalized” energy of Sandier-Serfaty in this context. The renormalized Coulomb energy of a stationary random electric field $\bEl$ is obtained by taking the limit $\eta \to 0$ of $\E\left[|\bEleta(0)|^2\right]$ in a renormalized fashion, namely by substracting a divergent part of order $\g(\eta)$ \corT{(the following limit as $\eta$ to $0$ exists by “almost”-monotonicity, see \cite[Sec. 2.3]{petrache2017next})}:
\begin{equation*}
\Coul(\bEl) := \lim_{\eta \to 0} \left(\E\left[|\bEleta(0)|^2\right] - \cd \g(\eta)\right),
\end{equation*}
and the Coulomb energy of a point process is then again phrased as an infimum:
\begin{equation*}
\Coul(\bbX) := \inf_{\bEl \text{ stat., comp. with  $\bbX$}} \Coul(\bEl).
\end{equation*}
The fact that this is the “correct” notion of an infinite-volume Coulomb energy is supported by a result of $\Gamma$-convergence connecting the finite-volume energy $\HN$ (as defined in \eqref{finiteHN}) to $\Coul$. We refer e.g.\ to \cite[Sec.~2.7~\&~3.]{leble2017large} for more details.

\subsubsection*{“Intrinsic” Coulomb energy}
\newcommand{\Coulint}{\Coul^{\mathrm{int}}}
\newcommand{\rhoD}{\rho_{2}}
\newcommand{\Dlog}{\mathcal{D}_{\mathrm{log}}}
One frustrating aspect of the electric approach to defining infinite-volume Coulomb energies is that it lacks any explicit formula (besides perhaps in the case of lattices). Given a certain point process, computing - or simply bounding above - its average Coulomb energy requires to construct a compatible stationary electric field and then compute its average (renormalized) energy... which sounds overwhelming.

In \cite{leble2016logarithmic}, inspired by computations done in \cite{borodin2013renormalized}, the second author tried to remedy  this issue by introducing a so-called “intrinsic” version of the Coulomb energy for stationary point processes, denoted by $\mathbb{W}^{\mathrm{int}}$ in \cite{leble2016logarithmic} and which we will here denote by $\Coulint$. The main advantage of $\Coulint$ is that it is \emph{completely explicit and depends only on the two-point correlation function of the process}. In the “good cases” (and focusing on dimension $2$):
\begin{enumerate}
   \item The intrinsic energy can be written as:
\begin{equation}
\label{eq:CoulintNice}
\Coulint(\bbX) := \int_{\R^2} -\log |v| (\rhoD - 1)(v) \dd v,
\end{equation}
 where $\rhoD$ is the two-point correlation function (or measure) of the process \corT{(informally, $\rhoD(v)$ is the  probability density of finding two points at the locations $0$ and $v$ - a rigorous definition is recalled in \cite[Sec. 2.3]{leble2016logarithmic})}. 
 \item  The following “electric-intrinsic” inequality holds, which provides a condition for finiteness of $\Coul$:
 \begin{equation}
 \label{ElecIntrin}
 \Coul(\bbX) \leq \Coulint(\bbX).
 \end{equation} 
\end{enumerate}
 There are, however, two drawbacks in that regard:
 \begin{enumerate}
    \item The nice expression \eqref{eq:CoulintNice} for $\Coulint$ is only valid when $\rhoD - 1$ has enough integrability at infinity.
    \item In general, in dimension $2$, the “electric-intrinsic” inequality \eqref{ElecIntrin} has to be corrected and written as:
    \begin{equation}
    \label{ElecIntrins}
     \Coul(\bbX) \leq \Coulint(\bbX) + \Dlog(\bbX),
    \end{equation}
    where $\Dlog$ was defined in \cite[Sec. 5]{leble2016logarithmic} as: 
    \begin{equation}
    \label{def:Dlog}
    \Dlog(\bbX) :=  C \limsup_{r \to \infty} \left( \sigmaX(r) \log r\right)
    \end{equation}
   where $\sigmaX$ is the rescaled number variance as in \eqref{def:HU} and $C$ is some constant whose value is irrelevant.
 \end{enumerate}
 It is clear that our condition $\SHU$ (see \eqref{eq:SHU}) is related to the condition $\Dlog = 0$. We will also explain below (see Proposition~\ref{prop:consequencesofmild}) how the “intrinsic” approach to Coulomb energy connects (some of) our results with (some of) the results of \cite{lachieze2024hyperuniformity}. 

\subsection{Connection with the literature}
\label{sec:discussion}
\subsubsection*{Finite Wasserstein distance to Lebesgue implies hyperuniformity}
Since finite Wasserstein distance to the Lebesgue measure or to a stationary lattice are equivalent, let us adopt the latter point of view. It has been known for a long time that:
\begin{enumerate}
   \item Stationary lattices are hyperuniform of class-I i.e.\ $\sigma(r) = \O(r^{-1})$.
   \item Stationary lattices with i.i.d. perturbations admitting a \emph{finite first moment} remain hyperuniform of class-I, as \corT{can be deduced from} \cite{gacs1975problem}, \corT{see also e.g. \cite[Thm. 3]{yakir2021fluctuations} and a discussion in \cite[Sec.~1.5]{yakir2021fluctuations}.}
\end{enumerate}
The question of whether \emph{dependent} perturbations could break hyperuniformity has been raised and studied in the statistical physics literature \cite{gabrielli2004point}, with arguments and computations hinting at the fact that - in dimension $2$ - applying dependent perturbations with \emph{finite second moment} to a stationary lattice should always preserve hyperuniformity. In the recent paper \cite{HUPL} this assertion is proven, and examples are given showing that the moment condition is sharp. In the language of Wasserstein distances, \cite[\corT{Thm. 1}]{HUPL} shows that:
\begin{enumerate}
   \item If $\Ww_2(\bbX, \PZ) < + \infty$ then $\bbX$ is hyperuniform i.e.\ the rescaled number variance $\sigma(r) \to 0$ as $r \to \infty$.
   \item For all $p < 2$, there exists $\bbX$ such that $\Ww_p(\bbX, \PZ) < + \infty$ and yet (!) $\sigma(r) = + \infty$ for all $r > 0$.
\end{enumerate}
It is remarkable that the dimension $2$ plays here a crucial role: for $\d \geq 3$ it is possible to break hyperuniformity with arbitrarily small dependent perturbations. Note also that for $\d = 1$, the situation is in a sense better, as finiteness of the $1$-Wasserstein distance to the stationary lattice is enough to imply hyperuniformity.

\subsubsection*{A partial converse: the result of \cite{lachieze2024hyperuniformity}}
\newcommand{\Cc}{\mathcal{C}}
In the recent paper \cite{lachieze2024hyperuniformity}, the following result is proven (in dimension $2$):
\begin{proposition}[Thm. 2, (ii) in \cite{lachieze2024hyperuniformity}]
\label{prop:LRY}
Let $\bbX$ be a stationary point process such that:
\begin{enumerate}
   \item $\bbX$ is hyperuniform.
   \item The “correlation measure” $\Cc$ of $\bbX$ is well-defined as a signed measure, has finite mass, and satisfies:
\begin{equation}
\label{eq:mild}
\int_{v \in \R^2, |v| \geq 1} \log|v| |\Cc|(\dd v)< + \infty.
\end{equation}
\end{enumerate}
Then the process $\bbX$ can be written as a stationary perturbation, with finite second moment, of a stationary lattice (and thus $\WD$ is satisfied).
\end{proposition}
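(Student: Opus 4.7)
The plan is to derive the proposition from our Theorem \ref{theo:EUWD} via the intrinsic Coulomb energy framework of \cite{leble2016logarithmic}: I would show that the assumptions of Proposition \ref{prop:LRY} imply $\EU$, then invoke Theorem \ref{theo:EUWD} to obtain $\WD$, which (as noted at the end of Section \ref{sec:DefWass}) is equivalent to the $L^2$-perturbed lattice representation.

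To obtain $\EU$, I would use the electric-intrinsic inequality \eqref{ElecIntrins}, namely $\Coul(\bbX) \leq \Coulint(\bbX) + \Dlog(\bbX)$. Since the correlation measure $\Cc$ of \cite{lachieze2024hyperuniformity} essentially plays the role of $(\rhoD - 1)(v)\, \dd v$ in \eqref{eq:CoulintNice}, finiteness of $|\Cc|$ combined with \eqref{eq:mild} yields $|\Coulint(\bbX)| \leq \int_{\R^2} |\log|v||\, |\Cc|(\dd v) < + \infty$: the near-origin part is controlled by local integrability of $|\log|v||$ against the finite measure $|\Cc|$, and the tail is exactly \eqref{eq:mild}.

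To control $\Dlog(\bbX) = C\, \limsup_{r \to \infty} \sigmaX(r) \log r$, I would expand
\begin{equation*}
\sigmaX(r) = 1 + \int_{\R^2} \frac{|\B_r \cap (\B_r - v)|}{|\B_r|}\, \Cc(\dd v),
\end{equation*}
valid for intensity $1$. Dominated convergence (using $|\Cc|(\R^2) < \infty$) gives $\sigmaX(r) \to 1 + \Cc(\R^2)$, so hyperuniformity forces $\Cc(\R^2) = -1$. Writing then $\sigmaX(r) = -\int (1 - \tfrac{|\B_r \cap (\B_r-v)|}{|\B_r|})\,\Cc(\dd v)$ and splitting the integral at $|v| = r$: for $|v| \leq r$ the integrand is bounded by $C|v|/r$, and monotonicity of $|v|/\log|v|$ on $[e, r]$ combined with \eqref{eq:mild} gives $\int_{|v|\leq r} |v|\,|\Cc|(\dd v) = O(r/\log r)$; for $|v| > r$ the estimate $|\Cc|(\{|v|>r\}) \leq (\log r)^{-1} \int_{|v|>r}\log|v|\,|\Cc|(\dd v)$ gives another $O(1/\log r)$. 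Summing, $\sigmaX(r) = O(1/\log r)$, hence $\Dlog(\bbX) < +\infty$, and combining with the bound on $\Coulint(\bbX)$ via \eqref{ElecIntrins} shows $\Coul(\bbX) < + \infty$, thus $\EU$.

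The main obstacle is technical: one must carefully reconcile the conventions of \cite{lachieze2024hyperuniformity} and \cite{leble2016logarithmic} (in particular, how the self-coincidence part of the two-point measure is separated from the signed correlation measure $\Cc$), and check that the formula \eqref{eq:CoulintNice} is valid under the mere assumption of a finite log-integrable signed measure. An alternative, potentially cleaner route would be to bypass $\Coulint$ entirely and construct a stationary compatible electric field with bounded regularized energy directly via a screening procedure adapted to these hypotheses, in the spirit of the other proofs in this paper.
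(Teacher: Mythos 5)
Your route is genuinely different from the paper's. The paper recovers Proposition \ref{prop:LRY} by proving (Proposition \ref{prop:consequencesofmild}) that the hypotheses imply $\SHU$ --- the key extra step being the dyadic summation $\sum_n \sigmaX(2^n) \leq \int\log|v|\,|\Cc|(\dd v) + C\int|\Cc|(\dd v)$ --- and then chaining $\SHU \implies \EU$ (Theorem \ref{theo:SHUEU}, via the spectral condition of \cite{sodin2023random}) with $\EU \implies \WD$ (Theorem \ref{theo:EUWD}). Your intrinsic-energy route is precisely the alternative sketched, but deliberately not carried out, in the Remark following Proposition \ref{prop:consequencesofmild}. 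Your estimate $\sigmaX(r) = \O(1/\log r)$, hence $\Dlog(\bbX) < +\infty$, is correct and mirrors the paper's computation; note however that $\O(1/\log r)$ alone does \emph{not} give $\SHU$ (the series $\sum_n 1/n$ diverges), so you cannot fall back on the paper's route without the summation argument of Proposition \ref{prop:consequencesofmild}.

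The genuine gap is the claim that $|\Coulint(\bbX)| \leq \int |\log|v||\,|\Cc|(\dd v) < +\infty$, with "the near-origin part controlled by local integrability of $|\log|v||$ against the finite measure $|\Cc|$". The kernel $|\log|v||$ is unbounded at $v=0$, and neither finite total variation of $\Cc$ nor \eqref{eq:mild} (which only concerns $|v| \geq 1$) gives any control on $\int_{0<|v|<1} |\log|v||\,|\Cc|(\dd v)$: a finite measure may put mass $k^{-2}$ at radius $e^{-k^3}$, making that integral infinite, and such behavior of $\rhoD$ near the diagonal (many very close pairs of points) is perfectly compatible with hyperuniformity and with \eqref{eq:mild}. (Moreover $\Cc$ itself carries an atom at $0$, so the bound as written is literally $+\infty$; the formula \eqref{eq:CoulintNice} uses $\rhoD - 1$, but the problem with the continuous part near the origin remains.) This is not a bookkeeping issue: the un-truncated kernel $-\log|v|$ makes $\Coulint$ and the renormalized energy genuinely sensitive to pairs at vanishing separation, which is information the hypotheses do not supply and which the conclusion $\WD$ does not require. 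The fix is the one proposed in the paper's Remark: replace $\log|v|$ by the truncated kernel $\widetilde{\log} = \log|\cdot| + \feta$, bounded on $\B_\eta$, and establish a \emph{regularized} electric-intrinsic inequality $\Coul_\eta(\bbX) \leq \Coulint_\eta(\bbX) + \Dlog(\bbX)$. That inequality is not available off the shelf in \cite{leble2016logarithmic} and would have to be proved (plausibly by adapting the arguments there, but it is real work), which is exactly why the paper routes the argument through $\SHU$ and \cite{sodin2023random} instead.
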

The “correlation measure” mentioned here is given by $\Cc := (\rhoD - \Leb) + \delta_0$, where $\rhoD$ is the “two-point (or pair) correlation measure” as above, hence the condition \eqref{eq:mild}, which is presented in \cite{lachieze2024hyperuniformity} as \emph{“a mild logarithmic integrability condition on the reduced pair correlation measure”}, can be rephrased as:
\begin{equation*}
\int_{v \in \R^2, |v| \geq 1} \log|v| |\rhoD(v) - 1| \dd v < + \infty
\end{equation*}
(at least when $\rhoD$ has a density with respect to the Lebesgue measure), which is very much reminiscent from \eqref{eq:CoulintNice} and seems to be controlling some kind of logarithmic (hence Coulomb) interaction energy of the underlying point process. Indeed, we find:

\begin{proposition}[Consequences of \eqref{eq:mild}]
\label{prop:consequencesofmild}
Under the assumptions of Proposition \ref{prop:LRY}:
\begin{itemize}
   \item $\bbX$ satisfies $\SHU$, i.e.\ it has finite regularized Coulomb energy, and thus (by our Theorem \ref{theo:EUWD}) finite $2$-Wasserstein distance to Lebesgue/a lattice. 
   \item Moreover, we have $\Dlog(\bbX) = 0$ (see \eqref{def:Dlog}).
\end{itemize}
\end{proposition}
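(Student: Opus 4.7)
The plan is to translate the integrability assumption \eqref{eq:mild} into the summability condition $\SHU$ by using the classical formula expressing the number variance of $\bbX$ in terms of the correlation measure $\Cc$. Once $\SHU$ is established, the remaining claim (finite regularized Coulomb energy, and hence finite $2$-Wasserstein distance via Theorem~\ref{theo:SHUEU} and Theorem~\ref{theo:EUWD}) follows immediately from our previous results; only the conclusion $\Dlog(\bbX)=0$ requires a small separate argument.

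First I would recall, via Campbell's formula, the identity
\begin{equation*}
\sigmaX(r) = \int_{\R^2} f_r(v)\, \Cc(\dd v), \qquad f_r(v) := \frac{|\BR \cap (\BR + v)|}{|\BR|}.
\end{equation*}
Since $\Cc$ has finite total variation and $f_r$ is bounded by $1$ and converges pointwise to $1$, dominated convergence together with the hyperuniformity assumption $\sigmaX(r)\to 0$ forces $\Cc(\R^2)=0$; therefore
\begin{equation*}
\sigmaX(r) = -\int_{\R^2} (1-f_r(v))\, \Cc(\dd v).
\end{equation*}
An elementary computation of the area of the symmetric difference of two planar disks of radius $r$ whose centers are at distance $|v|$ yields the uniform bound
\begin{equation*}
0\le 1-f_r(v) \le C\,\min\!\left(\tfrac{|v|}{r},1\right),
\end{equation*}
whence $|\sigmaX(r)| \le C\int \min(|v|/r,1)\,|\Cc|(\dd v)$.

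To obtain $\SHU$, I would sum the above bound over $r=2^n$ and exchange sum and integral by Tonelli. For each fixed $v$, about $\log_2 |v|$ terms $\min(|v|/2^n,1)$ equal $1$ (those with $2^n\le |v|$) while the remaining ones form a convergent geometric series, so $\sum_{n\ge 0}\min(|v|/2^n,1) \le C(1+\log_+|v|)$. Combining these facts,
\begin{equation*}
\sum_{n\ge 0}\sigmaX(2^n) \;\le\; C\int_{\R^2}(1+\log_+|v|)\,|\Cc|(\dd v) \;<\;\infty,
\end{equation*}
by finiteness of $|\Cc|(\R^2)$ and assumption \eqref{eq:mild}. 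This is exactly $\SHU$; invoking Theorem~\ref{theo:SHUEU} then yields finite regularized Coulomb energy, and Theorem~\ref{theo:EUWD} then yields finite $2$-Wasserstein distance to Lebesgue.

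For the statement $\Dlog(\bbX)=0$, I would apply dominated convergence directly to
\begin{equation*}
\sigmaX(r)\log r \;\le\; C\int_{\R^2}(\log r)\,\min\!\left(\tfrac{|v|}{r},1\right)|\Cc|(\dd v).
\end{equation*}
A short check shows that, for fixed $v$ and $r\ge 1$, the integrand is dominated by $C(1+\log_+|v|)$ (which is $|\Cc|$-integrable by \eqref{eq:mild}) and tends to $0$ as $r\to\infty$. Dominated convergence then delivers $\sigmaX(r)\log r\to 0$, i.e.\ $\Dlog(\bbX)=0$.

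The only delicate point is the uniform geometric inequality $1-f_r(v)\le C\min(|v|/r,1)$, which comes down to the planar estimate $|\BR\triangle(\BR+v)|\le C r \min(|v|,r)$; once that is in hand, the entire argument is a clean combination of Tonelli and dominated convergence driven by the logarithmic integrability hypothesis.
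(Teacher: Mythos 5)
Your argument is correct and follows essentially the same route as the paper's proof: both express $\sigmaX(r)$ through the correlation measure via the autocorrelation function of the ball (your $f_r$ is exactly the paper's $\jr$), use the hyperuniformity sum rule together with the bound $1-f_r(v)\leq C\min(|v|/r,1)$, and then sum over dyadic radii by Tonelli to get $\SHU$ from \eqref{eq:mild}. The only cosmetic differences are your normalization of $\Cc$ (so that the sum rule reads $\Cc(\R^2)=0$ rather than $-1$) and your use of dominated convergence with the single dominating function $C(1+\log_+|v|)$ to get $\Dlog(\bbX)=0$, where the paper instead splits the integral at $|v|=\sqrt{r}$; both are valid.
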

\begin{proof}[Proof of Proposition \ref{prop:consequencesofmild}]
It is known (see \cite[Sec. 1]{coste2021order}) that the rescaled number variance in $\BR$ can be expressed for $r > 0$ as:
\begin{equation}
\label{rescaled}
\sigmaX(r) = 1 + \int_{\R^2} \jr(v) \Cc(\dd v),
\end{equation}
with $\jr := \frac{1}{|\BR|} \1_{\BR} \ast \1_{\BR}$, and that if $\Cc$ is well-defined as a signed measure, then hyperuniformity is equivalent (see \cite[Sec.~2.2]{coste2021order}) to the “sum rule”:
\begin{equation}
\label{HUT2V}
\int_{\R^2} \Cc(\dd v) = -1.
\end{equation}
The function $\jr$ is supported on $\B_{2r}$, with $\jr(0) = 1$, and is $\O(r^{-1})$-Lipschitz, thus it satisfies:
\begin{equation*}
\jr(v) = 1 + \O\left(\frac{|v|}{r} \right) \text{ for $|v| \leq 2r$}, \quad \jr(v) = 0 \text{ for $|v| \geq 2r$}.
\end{equation*}
If $\bbX$ is hyperuniform, we may use \eqref{rescaled} and \eqref{HUT2V} to write:
\begin{multline}
\label{sigmaXT2}
\sigmaX(r) = 1 + \int_{v \in \R^2, |v| \leq 2r} \Cc(\dd v) + \int_{v \in \R^2, |v| \leq 2r} \O\left(\frac{|v|}{r} \right) |\Cc|(\dd v) \\
= - \int_{v \in \R^2, |v| > 2r} \Cc(\dd v) + \int_{v \in \R^2, |v| \leq 2r} \O\left(\frac{|v|}{r} \right) |\Cc|(\dd v).
\end{multline}
 Now, if the condition \eqref{eq:mild} of \cite{lachieze2024hyperuniformity} holds, we can write, on the one hand:
\begin{equation*}
\left| \int_{v \in \R^2, |v| > 2r} \Cc(\dd v)\right| \leq \frac{1}{\log r} \int_{v \in \R^2, |v| > 2r} \log|v| |\Cc|(\dd v) = o\left(\frac{1}{\log r}\right) \text{ as } r \to \infty,
\end{equation*}
and on the other hand:
\begin{multline*}
\int_{v \in \R^2, |v| \leq 2r} \frac{|v|}{r}  |\Cc|(\dd v) \leq \frac{1}{\sqrt{r}} \int_{v \in \R^2, |v| \leq \sqrt{r}}  |\Cc|(\dd v) + \frac{2}{\log(r)} \int_{v \in \R^2,  |v| > \sqrt{r}}  \log|v| |\Cc|(\dd v) \\
= o\left(\frac{1}{\log r}\right) \text{ as } r \to \infty,
\end{multline*}
which gives us $\sigmaX(r)\log r \to 0$ as $r \to \infty$ and thus indeed $\Dlog = 0$ (see \eqref{def:Dlog}). This proves the second item.

Moreover, going back to \eqref{sigmaXT2}, taking $r = 2^n$ and summing over $n \geq 1$, we find:
\begin{multline*}
\sum_{n = 1}^{+ \infty} \sigmaX(2^n) \leq \sum_{n=1}^{+\infty} \int_{v \in \R^2, |v| > 2^{n+1}} |\Cc|(\dd v) + C \sum_{n=1}^{+\infty} \int_{v \in \R^2, |v| \leq 2^{n+1}} \frac{|v|}{2^n}  |\Cc|(\dd v) \\
\leq \int_{v \in \R^2, |v| \geq 1} |\Cc|(\dd v) \left(\sum_{n=1}^{\log |v|} 1 \right)  + C \int_{v \in \R^2} |v| |\Cc|(\dd v) \left(\sum_{n=\log |v|}^{+ \infty} 2^{-n}  \right) \\
\leq  \int_{v \in \R^2, |v| \geq 1} \log|v| |\Cc|(\dd v) + C \int_{v \in \R^2} |\Cc|(\dd v) < + \infty, 
\end{multline*}
because of the assumptions on $\Cc$ and the condition \eqref{eq:mild}. Thus $\bbX$ satisfies indeed $\SHU$.
\end{proof}

\begin{remark}
Since the condition \eqref{eq:mild} (together with hyperuniformity) is found to imply $\SHU$ and since on the other hand we prove that $\SHU$ implies $\EU$, and thus $\WD$, we do recover the result of \cite[Thm.~2,~(ii)]{lachieze2024hyperuniformity}. Of course, here we need to first pass from $\SHU$ to $\EU$, and the proof of this implication uses the analysis of \cite{sodin2023random} which is a much longer route than the one taken in \cite{lachieze2024hyperuniformity} (which is, in spirit, very close to the implication $\EU \implies \WD$). 

Because (as we just proved) the condition of \cite{lachieze2024hyperuniformity} also implies that $\Dlog = 0$, it seems to us that another way to deduce $\EU$ (and thus $\WD$) from it would be to return to the analysis of \cite{leble2016logarithmic} and to find an “intrinsic” formulation for the \emph{regularized} logarithmic energy (which is in fact easier to handle than the renormalized one, so the arguments of \cite{leble2016logarithmic} would readily apply). Without a doubt, since the regularization corresponds to a short-scale truncation of the interaction, one would be led to something like:
\begin{equation*}
\Coulint_\eta(\bbX) := \int_{v \in \R^2} - \widetilde{\log}(v) \ (\rho_2 - 1)(v) \dd v, 
\end{equation*}
with $\widetilde{\log}(v) =  \log|v| + \feta(v)$, which coincides with $\log|v|$ outside $\B_\eta$ and is bounded on $\B_\eta$,  and presumably to an “electric-intrinsic” inequality of the form:
\begin{equation*}
\Coul_\eta(\bbX) \leq \Coulint_\eta(\bbX) + \Dlog(\bbX).
\end{equation*}
Then hyperuniformity together with \eqref{eq:mild} would imply that $\Dlog(\bbX) = 0$ and $\Coulint_\eta(\bbX) < + \infty$, and therefore that $\EU$, and thus $\WD$, are satisfied.
\end{remark}

\subsubsection*{Wasserstein distance versus discrepancies in finite volume: results and questions of \cite{Butez:2024aa}} \newcommand{\bsigma}{\bar{\sigma}}
In the recent preprint \cite{Butez:2024aa}, the authors estimate the Wasserstein distance in large square boxes between two-dimensional point processes and their “mean” in terms of controls on the size of “discrepancies” i.e.\ the difference between the number of points and its expectation in any square box. Among their results, the one closest to our concerns is \cite[Thm.2, case $p=2$]{Butez:2024aa}, which we reformulate here in order to make a comparison easier. Let us consider a stationary point process $\bbX$ in $\R^2$, and assume for convenience that $\bbX$ has intensity $1$, i.e. $\E[|\bbX \cap \B_1|] = |\B_1|$. If $\La$ is a square, we let $\bbX_\La$ be the restriction of the point process to the square $\La$. For $r>0$ let $\La_r=\left[-\frac{r}{2},\frac{r}{2} \right]^2.$
\begin{proposition}[{\cite[Thm.2]{Butez:2024aa}}]
\label{prop:Butez}
 For $r > 0$, let $\bsigma(r)$ be (a non-increasing upper bound on) the rescaled number variance of $\bbX$ in \emph{the square} $\La_r$. Then for all $N > 1$, we have:
\begin{equation}
\label{eq:Butez}
\frac{1}{|\La_N|} \E\left[ \WWW_2^2\left(\bbX_{\La_N}, \frac{|\bbX \cap \La_N|}{|\La_N|} \Leb_{\La_N} \right) \right] \leq C \left(1 + \int_{1}^N \frac{\sqrt{\bsigma(s)}}{s} \dd s \right).
\end{equation}
\end{proposition}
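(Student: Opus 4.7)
The strategy is a dyadic multi-scale construction: partition $\La_N$ into nested dyadic sub-squares and build a transport plan between $\bbX_{\La_N}$ and the uniform target measure level by level. Assume $N = 2^K$ and, for each $k = 0, \ldots, K$, partition $\La_N$ into $4^k$ sub-squares of side $s_k = N/2^k$. Define the intermediate measure $\mu_k$ as uniform on each level-$k$ sub-square with total mass equal to the number of points of $\bbX$ therein. Then $\mu_0 = \tfrac{|\bbX \cap \La_N|}{|\La_N|}\,\Leb_{\La_N}$ coincides with the target measure on the left-hand side of \eqref{eq:Butez}, while $\mu_K$ spreads the point count of each unit cell uniformly over that cell; the residual step from $\mu_K$ to the atomic $\bbX_{\La_N}$ costs at most $2|\bbX \cap \La_N|$ (each point moves at most $\sqrt{2}$ inside its unit cell), which upon division by $|\La_N|$ and expectation contributes the ``$+1$'' on the right-hand side of \eqref{eq:Butez}.

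Within each level-$(k-1)$ parent square $Q$, the measures $\mu_{k-1}$ and $\mu_k$ differ only in how they distribute the common total mass $M := |\bbX\cap Q|$ among the four children $Q_1,\ldots,Q_4$: $\mu_{k-1}$ places $M/4$ on each child while $\mu_k$ places the actual counts $m_j$. A Loeper-type inequality (relating $\WWW_2^2$ to the $H^{-1}$ norm for measures of controlled density) or a direct Benamou-Brenier computation yields a per-parent bound of the form $\WWW_2^2(\mu_{k-1},\mu_k)\restriction_Q \leq C\,s_{k-1}^{2}\cdot \Psi(m_1,\dots,m_4;M)$, where $\Psi$ is controlled by the children's discrepancies from $M/4$ (crudely, $\sum_j |m_j - M/4|$; sharply, $\sum_j(m_j - M/4)^2/M$ when the local density is bounded). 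Taking expectations and using $|m_j - M/4|\leq |m_j - s_k^2| + \tfrac14|M - s_{k-1}^2|$, the variance bound $\E[(|\bbX\cap Q|-|Q|)^2] \leq \bar\sigma(s_k)\,s_k^2$, and the Cauchy-Schwarz inequality $\E|Y|\leq\sqrt{\E Y^2}$, then summing over the $4^{k-1}$ parents and dividing by $|\La_N|$ produces a per-scale bound of order $\sqrt{\bar\sigma(s_{k-1})}$; summing over scales yields the Riemann sum of $\int_1^N \sqrt{\bar\sigma(s)}/s\,\dd s$.

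The main obstacle is the interplay in the expectation step between the Loeper-type bound (which requires a density upper bound on one of the two measures) and the randomness of the local mass $M$: on the high-probability event $\{M \asymp s_{k-1}^2\}$ the per-parent density is controlled and the sharp bound applies, while on the complementary event (of probability $O(\bar\sigma(s_{k-1}))$ by Chebyshev) one must use a crude fallback estimate absorbed by the main term. The square root appearing in $\sqrt{\bar\sigma(s)}$ is precisely the signature of the $L^1$-to-$L^2$ Cauchy-Schwarz step on the children discrepancies, rather than a tighter $\bar\sigma$-bound obtainable under stronger independence assumptions; the weight $1/s$ in the integrand reflects the geometric growth of the number of parents per scale, so that the dyadic sum $\sum_k \sqrt{\bar\sigma(s_k)}$ converts naturally to the displayed integral. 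Finally, the hypothesis that $\bar\sigma$ is non-increasing is precisely what makes this dyadic-to-integral conversion lossless.
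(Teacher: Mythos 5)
First, a point of reference: the paper does not prove Proposition \ref{prop:Butez}; it is a reformulation of \cite[Thm.~2]{Butez:2024aa} quoted for comparison, so there is no internal proof to measure your attempt against, and I evaluate it on its own terms. Your dyadic multiscale strategy is indeed the natural one for such bounds, and your identification of the two technical obstacles (density control for the Loeper-type inequality, and the $L^1$-to-$L^2$ Cauchy--Schwarz on the children discrepancies) is sensible. But the quantitative accounting does not close.

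Two concrete problems. \emph{(i) The per-scale arithmetic.} Write $\Delta_Q:=\sum_j|m_j-M/4|$; Cauchy--Schwarz and the variance bound give $\E[\Delta_Q]\lesssim s_{k-1}\sqrt{\bsigma(s_k)}$. With the crude per-parent bound $C\,s_{k-1}^2\,\Delta_Q$ for the cost of the step $\mu_{k-1}\to\mu_k$ inside $Q$, summing over the $4^{k-1}=N^2/s_{k-1}^2$ parents and dividing by $N^2$ yields a per-scale contribution of order $s_{k-1}\sqrt{\bsigma(s_k)}$, not $\sqrt{\bsigma(s_{k-1})}$: the extra factor $s_{k-1}$ is fatal, since the sum over scales is then of order $N\sqrt{\bsigma(N)}$. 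With the sharp bound $C\,\frac{s_{k-1}^2}{M}\sum_j(m_j-M/4)^2$ the same computation yields $\bsigma(s_k)$ per scale, with no square root. Neither version produces the claimed $\sqrt{\bsigma(s_{k-1})}$. \emph{(ii) Combining scales.} You sum the per-scale costs as if $\WWW_2^2$ were additive along the chain $\mu_0\to\mu_1\to\cdots\to\mu_K\to\bbX_{\La_N}$. It is not: from per-scale bounds $\E[\WWW_2^2(\mu_{k-1},\mu_k)]\leq N^2b_k$, the triangle inequality plus weighted Cauchy--Schwarz only gives $\E[\WWW_2^2(\mu_0,\mu_K)]\lesssim N^2\bigl(\sum_k\sqrt{b_k}\bigr)^2$. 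For the Poisson process ($\bsigma\equiv1$) this produces $(\log N)^2$, whereas \eqref{eq:Butez} asserts (and Ajtai--Koml\'os--Tusn\'ady confirm is sharp) $\log N$. Beating the squared sum --- making the scales contribute additively --- is precisely the hard content of the cited theorem, and requires an ingredient your sketch does not supply: for instance an approximate orthogonality of the displacements across dyadic scales, or an $L^\infty$ bound on the displacement combined with the $\WWW_1$ estimate $\frac{1}{N^2}\E[\WWW_1]\lesssim\sum_k\sqrt{\bsigma(s_k)}$ (which your accounting, run for $p=1$ where the triangle inequality is enough, does correctly deliver --- this is in fact where the characteristic $\sqrt{\bsigma}$ and the first-power sum come from), or a global $H^{-1}$/Fourier argument.
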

Note that the quantity $\bsigma$ does not control the variance in disks (as $\sigma$ does), but in squares. In general, the notion of hyperuniformity can be shape-dependent (see \cite[Sec. 2.1]{coste2021order} for a discussion), but although the technique of \cite{Butez:2024aa} does use the fact that $\La_N$ is a square, this does not seem like a major difference. For non-increasing $\bsigma$, it is equivalent to have $\int_{1}^{+\infty} \frac{\sqrt{\bsigma(s)}}{s} \dd s < + \infty$ and $\sum_{n \geq 0} \sqrt{\bsigma(2^n)} < + \infty$. Hence the result of \cite{Butez:2024aa} implies:
\begin{equation}
\label{butez2}
\text{If } \sum_{n \geq 0} \sqrt{\bsigma(2^n)} < + \infty, \text{ then } \limsup_{N \to \infty} \frac{1}{|\La_N|}  \E\left[  \WWW_2^2\left(\bbX_{\La_N}, \frac{|\bbX \cap \La_N|}{|\La_N|} \Leb_{\La_N} \right) \right] < + \infty.
\end{equation}
The assumption on the left-hand side of \eqref{butez2} looks very similar to $\SHU$, with one minor difference (computing the number variance on squares rather than on disks) and a more significant one which is the presence of a square root (since $\bsigma \to 0$, we have $\sqrt{\bsigma} \gg \bsigma$). Of course, if $\bsigma$ behaves as a power-law (which is by far the most common case among hyperuniform point processes) then both $\sum_{n \geq 0} \sqrt{\bsigma(2^n)}$ and $\sum_{n \geq 0} \bsigma(2^n)$ will be finite, but strictly speaking the former assumption is a bit stronger. The conclusion in the right-hand side of \eqref{butez2} is reminiscent of the definitions \eqref{wpperunit}, \eqref{eq:Wass_p_def} given in Section \ref{sec:DefWass}.
Indeed, arguing as for instance in \cite[Section 4]{HuSt13}, see in particular Corollary 4.5, the right hand side of \eqref{butez2} implies finiteness of $\Ww_2^2(\bbX, \Leb)$.

Let us turn to questions raised in \cite{Butez:2024aa}. The authors ask: \emph{“Can we find conditions on two-dimensional point processes for which the Wasserstein distance is of the optimal order?”}, to which our Theorems \ref{theo:EUWD} and \ref{theo:SHUEU} answer that (in the case of stationary point processes) $\SHU$, or equivalently $\EU$, are sufficient conditions for having $2$-Wasserstein distances of optimal order.

Moreover, we show in Lemma \ref{lem:costPoissonHU} the existence (for any $p\geq 1$) of a hyperuniform point process $\bbX$ such that $\Ww_p(\bbX,\Leb)=\infty$, which provides a negative answer to the possibility raised \cite[Sec. 4]{Butez:2024aa} that \emph{“it may still be true that a general hyperuniform point process attains the optimal transport cost”}.

\subsubsection*{$L^p$ version of Theorem \ref{theo:EUWD} }
As explained above, Theorem \ref{theo:EUWD} can be thought of as an infinite volume equivalent of the classical $\WWW_2-H^{-1}$ inequality. This inequality has natural $L^p$ versions, the $\WWW_p-H^{-1,p}$ inequalities, see \cite[Theorem 2]{Ledoux_2019}. Hence, one could expect that in our setup $\Ww_p(\bbX,\Leb)$ should be upper bounded by $\E[| \bEl_1(0)|^p] +1 $ for any electric field compatible with $\bbX$. However, there is no obvious interpretation of $\E[| \bEl_1(0)|^p]$ so the case  $p=2$ is the interesting and natural one for us - thus we formulated our results for $p=2$ only. \corT{In this vein, one should however mention the paper \cite{sodin2010uniformly}, which (roughly speaking) studies  the case $p = + \infty$ of a very closely related problem. In \cite[Thm 1.4]{sodin2010uniformly}, it is shown that finiteness of the $\WWW_\infty$ transport distance to the Lebesgue measure is equivalent to existence of a compatible electric field with some good $L^\infty$ properties.}

It is also worth noting that the Riesz energy (when the Coulomb potential $\g$ is replaced by a power law $\frac{1}{|\cdot|^s}$ for some $s$) does not seem to have a natural connection to Wasserstein distances.

\subsection{Notation, plan of the paper}
We let $\La_R$ be the square $[-\hal R, \hal R]^\dd$ and write $\Leb_{A}$ for the restriction of the Lebesgue measure to the set $A$. 

We denote deterministic point configurations (resp.\ fields) with bold face letters as $\bX$ (resp.\ $\El$) and random point configurations (resp.\ fields) with additional bullets as superscripts as $\bbX$ (resp.\ $\bEl$). 

We sometimes write $A \preceq B$ to express the fact that $A$ is smaller than $B$ times some universal constant. 

\subsubsection*{Plan of the paper.}
\begin{itemize}
   \item In Section \ref{sec:E1W2} we prove that finite regularized Coulomb energy implies finite Wasserstein distance in any dimension (Theorem \ref{theo:EUWD}).
   \item In Section \ref{sec:proofWDdensity} we prove that finite Wasserstein distance plus a uniform density bound implies finite regularized Coulomb energy (Theorem \ref{theo:WDdensity}). We introduce (see Section \ref{sec:ScreenWass}) a new procedure of “screening in Wasserstein space”, strongly inspired by the “screening” lemma of Sandier-Serfaty, which is a key tool for the study of Coulomb gases.
   \item In Section \ref{sec:Sodin}, we prove the equivalence between $\SHU$ and $\EU$ (Theorem \ref{theo:SHUEU}). To do so, we rely on the results of \cite{sodin2023random}, which we present, and we show that both $\SHU$ and $\EU$ are equivalent to the “spectral condition” put forward in \cite{sodin2023random}.
   \item Finally, in Section \ref{sec:constructions}, we construct simple counter-examples to the implications $\WD \implies \EU$ and $\HU \implies \WD$ (Theorem \ref{theo:counter}).
\end{itemize}

\newcommand{\Cost}{C_2}
\newcommand{\bXint}{\bX_{\eta_0}^{\mathrm{int}}}
\renewcommand{\rm}{\mathrm{m}}
\newcommand{\Ralpha}{\mathcal{R}_\alpha}
\newcommand{\bXext}{\bX^{\mathrm{ext}}}
\newcommand{\muext}{\mm^{\mathrm{ext}}}
\newcommand{\tmu}{\tilde{\mm}}
\newcommand{\rmuext}{\rm^{\mathrm{ext}}}
\newcommand{\Eext}{\El^{\mathrm{ext}}}

\section{From Coulomb energy to finite Wasserstein distance}
\label{sec:E1W2}
\subsection{The basic \texorpdfstring{$W_2$-$H^{-1}$ inequality}{W2-H-1 inequality}}
The basic tool is the following classical lemma (\cite{Ledoux_2019,peyre2018comparison}):
\begin{lemma}
\label{lem:W2Hm1}
Let $\mm$ be a probability measure on $\La_1$ with bounded density with respect to the Lebesgue measure and let $v$ be a vector field on $\La_1$ satisfying:
\begin{equation}
\label{vmuleb}
-\div(v) = \mm - \Leb_{\La_1}, \quad v \cdot \vec{n} = 0 \text{ on } \partial \La_1.
\end{equation}
Then there exists a coupling $\Pi$ between $\mm$ and $\Leb_{\La_1}$ (as probability measures on $\La_1$) such that:
\begin{equation}
\label{eq:W2EnergElec}
\iint_{\La_1 \times \La_1} |x-y|^2 \dd \Pi(x,y) \leq C \int_{\La_1} |v|^2.
\end{equation}
\end{lemma}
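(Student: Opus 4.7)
The plan is to apply the Benamou--Brenier formulation \eqref{eq:BenamouBrenier}: it suffices to exhibit an admissible pair $(\mm_t, v_t)_{t \in [0,1]}$ interpolating between $\Leb_{\La_1}$ and $\mm$ whose action is bounded by $C \int_{\La_1} |v|^2$. The desired coupling $\Pi$ is then produced as the Kantorovich optimiser of the corresponding finite-cost transport problem.

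My natural ansatz is the linear interpolation $\mm_t := (1-t)\Leb_{\La_1} + t \mm$, with density $\rm_t := (1-t) + t\rm$, paired with the velocity field $v_t := v/\rm_t$ (extended by $0$ where $\rm_t = 0$). Admissibility is immediate: the continuity equation $\partial_t \rm_t + \div(\rm_t v_t) = (\rm - 1) + \div v = 0$ is exactly the hypothesis \eqref{vmuleb}; the Neumann condition $\rm_t v_t \cdot \vec n = v \cdot \vec n = 0$ is given; and $\|\rm_t\|_{L^\infty} \leq \max(1, \|\rm\|_\infty)$ follows from the density bound on $\mm$. A direct computation then factorises the action as
\[
\Action\bigl((\mm_t, v_t)_t\bigr) \;=\; \int_0^1 \int_{\La_1} \frac{|v|^2}{\rm_t}\, \dd x\, \dd t \;=\; \int_{\La_1} |v(x)|^2 \cdot \frac{\log \rm(x)}{\rm(x) - 1}\, \dd x,
\]
using the explicit computation $\int_0^1 \dd t/\rm_t(x) = \log(\rm(x))/(\rm(x) - 1)$.

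The main step is to bound the factor $\log(\rm)/(\rm - 1)$. It is uniformly bounded as long as $\rm$ stays away from $0$, and the feature making the lemma work is that \emph{one endpoint of the interpolation is the Lebesgue measure} (density $1$), so that the time-averaged density $(1 + \rm)/2$ is bounded below by $1/2$. The main obstacle is the possible vanishing of $\rm$ on a set of positive measure, where the naive bound diverges. My planned fix is to replace the trial velocity $v/\rm_t$ by the instantaneously optimal one $\nabla \phi_t$ (solving the weighted Poisson problem $-\div(\rm_t \nabla \phi_t) = \mm - \Leb_{\La_1}$ with Neumann boundary condition), so that the per-time action equals the weighted dual Sobolev norm $\|\mm - \Leb_{\La_1}\|^2_{\dot H^{-1}(\rm_t)}$; a Peyre-type argument --- exploiting precisely the uniform lower bound on the averaged weight $(1+\rm)/2$, and the fact that the given field $v$ is a competitor in the variational definition of that weighted norm --- then controls its time integral by a universal constant multiple of $\int_{\La_1} |v|^2$, yielding the claim.
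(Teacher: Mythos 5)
Your setup (Benamou--Brenier, linear interpolation $\mm_t = (1-t)\Leb_{\La_1}+t\mm$, and the identification of the divergence of $\int_0^1 \rm_t^{-1}\,\dd t = \log\rm/(\rm-1)$ where $\rm$ vanishes) matches the paper's, but your proposed fix does not work as described. You claim that passing to the instantaneously optimal velocity and invoking the lower bound on the \emph{time-averaged} density $(1+\rm)/2\ge 1/2$ controls $\int_0^1 \|\mm-\Leb_{\La_1}\|^2_{\dot H^{-1}(\rm_t)}\,\dd t$ by $C\int_{\La_1}|v|^2$. That quantity is in general infinite, precisely in the case you are trying to handle. Indeed, suppose $\rm\equiv 0$ on a ball $D\subset\La_1$: every competitor $w$ with $-\div(w)=\mm-\Leb_{\La_1}$ has forced flux $\int_{\partial D_s}w\cdot\vec{n}=-|D_s|$ through each concentric sphere $\partial D_s$, so Cauchy--Schwarz gives $\int_D|w|^2\ge c_D>0$; since $\rm_t=1-t$ on $D$, this yields $\|\mm-\Leb_{\La_1}\|^2_{\dot H^{-1}(\rm_t)}\ge c_D/(1-t)$, whose time integral diverges. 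The averaged-density observation cannot rescue this: Jensen gives $\int_0^1\rm_t^{-1}\,\dd t\ge \bigl(\int_0^1\rm_t\,\dd t\bigr)^{-1}$, i.e.\ the inequality points the wrong way.

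The correct ``Peyre-type'' mechanism is different: one uses the \emph{pointwise-in-time} bound $\rm_t\ge 1-t$ (coming from the Lebesgue endpoint) to get $\|\mm-\Leb_{\La_1}\|_{\dot H^{-1}(\rm_t)}\le (1-t)^{-1/2}\bigl(\int_{\La_1}|v|^2\bigr)^{1/2}$, integrates the \emph{first} power (the metric derivative) using $\int_0^1(1-t)^{-1/2}\,\dd t=2$, and only then squares, via $\WWW_2(\Leb_{\La_1},\mm)\le\int_0^1|\dot{\mm}_t|_{\WWW_2}\,\dd t$. Equivalently --- and this is exactly what the paper does, following Ledoux --- one keeps the crude velocity built from $v$ but reparametrizes time by $\theta(t)=1-(1-t)^2$, so that $\rm_{\theta(t)}\ge 1-\theta(t)=(1-t)^2$ and the action picks up the convergent factor $\int_0^1(\theta')^2/(1-\theta)\,\dd t=4$. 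Either formulation closes the argument with the constant $C=4$; your write-up is missing this reparametrization / square-root step, and as stated the final bound is false.
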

\begin{proof}[Proof of Lemma \ref{lem:W2Hm1}]
The idea is to find a simple interpolation $(\mm_t)_{t \in [0,1]}$ between $\Leb_{\La_1}$ and $\mm$ and to use $v$ to construct an associated velocity field $(v_t)_{t \in [0,1]}$ such that $(\mm_t, v_t)_{t \in [0,1]}$ solves the continuity equation \eqref{continuity}, so that the associated action gives an upper bound on the $2$-Wasserstein distance between the measures. A usual choice consists in simply taking a linear interpolation $\mm_t := (1-t) \Leb_{\La_1} + t \mm$, however if the density of $\mm$ does not have a positive lower bound then an integrability issue arises as $t \to 1$. We use instead the choice suggested by \cite[Proof of Thm. 2]{Ledoux_2019} and set:
\begin{equation}
\label{eq:detmutsmart}
\mm_t := \left(1-\theta(t)\right) \Leb_{\Lambda_1} + \theta(t) \mm, \quad \theta(t) := 1 - (1-t)^2.
\end{equation}
It is clear that $\mm_0 = \Leb_{\Lambda_1}$, $\mm_1 = \mm$, moreover we can easily compute, using \eqref{vmuleb}:
\begin{equation*}
\frac{\dd}{\dd t} \mm_t = \theta'(t) (\mm - \Leb_{\Lambda_1}) = -\div\left(\theta'(t)  v \right)
\end{equation*}
 and thus, setting $v_t := \frac{\theta'(t)  v}{\rm_t}$ ($\rm_t$ denoting the density of $\mm_t$), we do ensure that $(\mm_t, v_t)_{t \in [0,1]}$ solves the continuity equation. Observe that the density $\rm_t$ is bounded below by $1 - \theta(t)$ for all $t \in [0,1]$ and compute the action:
\begin{equation*}
\int_{0}^1 \int_{\Rd} |v_t|^2 \dd \mm_t \dd t \leq \int_{0}^1 \left( \int_{\La_1} \frac{\left(\theta'(t)\right)^2 |v(x)|^2}{\rm^2_t(x)} \rm_t(x) \dd x \right)  \dd t \leq \int_{\La_1} |v(x)|^2 \dd x \int_{0}^1 \frac{\left(\theta'(t)\right)^2}{1 - \theta(t)} \dd t,
\end{equation*}
and a direct computation shows that the last integral over $\dd t$ is finite.
\end{proof}

We deduce the following.
\begin{corollary}
\label{coro:W2Hm1bigbox}
Let $n \geq 1$, let $\bX$ be a $n^\d$-point configuration in $\La_n$ - seen here as a purely atomic measure with mass $n^\d$, and let $\eta \in (0,1)$. Assume that $\min_{x \in \bX} \dist(x, \partial \La_n) > \eta$, and let $\bX_{\eta}$ be the “spread-out” version of $\bX$ at distance $\eta$ as in \eqref{def:deta}. Let $\El$ be an electric field compatible with $\bX$ in $\La_n$ and screened in the sense of \eqref{def:screened}, i.e.
\begin{equation}
\label{EletaNeumann}
-\div(\El) = \cd \left(\bX - \Leb \right) \text{ in $\La_n$,} \quad \El \cdot \vec{n} = 0 \text{ on } \partial \La_n.
\end{equation}
Then there exists a coupling $\Pi(\cdot, \cdot ; \bX)$ between $\bX$ and $\Leb_{\La_n}$ (as measures of mass $n^\d$ on $\La_n$) such that:
\begin{equation}
\label{eq:W2EnergElecConfig}
\frac{1}{n^\d} \int_{\La_n \times \La_n} |x-y|^2 \dd \Pi(x,y ; \bX) \leq C \left( \frac{1}{n^\d} \int_{\La_n} |\Eleta|^2 + 1 \right).
\end{equation}
\end{corollary}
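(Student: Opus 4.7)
The idea is to reduce to Lemma~\ref{lem:W2Hm1} by factoring the transport from $\bX$ to $\Leb_{\La_n}$ into two pieces: a very short-range transport from $\bX$ to its spread-out version $\bX_\eta$, and the main transport from $\bX_\eta$ to $\Leb_{\La_n}$ which will be handled by the electric/$H^{-1}$-type argument.

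\textbf{Step 1 (short-range piece).} Define a coupling $\Pi_2$ between $\bX$ and $\bX_\eta$ by transporting, for each $x \in \bX$, the Dirac mass $\delta_x$ onto the spread-out measure $\deta_x$ supported in $\B_\eta(x)$. Both marginals have mass $1$ and are supported in $\B_\eta(x)$, so the associated quadratic cost is at most $\eta^2 \leq 1$ per point. Summing over the $n^\d$ points of $\bX$ and normalizing by $n^\d$ gives an $O(1)$ contribution per unit volume.

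\textbf{Step 2 (main piece via a scaled version of Lemma~\ref{lem:W2Hm1}).} By Claim~\ref{claim:propertiesOF} and the assumption that $\min_{x\in\bX} \dist(x, \partial\La_n) > \eta$, the truncated field $\Eleta$ satisfies
\begin{equation*}
-\div(\Eleta) = \cd\,(\bX_\eta - \Leb_{\La_n}) \text{ in } \La_n, \qquad \Eleta\cdot\vec n = 0 \text{ on } \partial \La_n.
\end{equation*}
Now I would apply the same Benamou--Brenier argument as in the proof of Lemma~\ref{lem:W2Hm1}, but on the box $\La_n$ and with the pair $(\bX_\eta, \Leb_{\La_n})$ (of equal mass $n^\d$). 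The interpolation
\begin{equation*}
\mm_t := \bigl(1-\theta(t)\bigr)\Leb_{\La_n} + \theta(t)\,\bX_\eta, \qquad \theta(t) = 1-(1-t)^2,
\end{equation*}
together with the velocity field $v_t := \theta'(t)\,\Eleta/(\cd\,\rm_t)$, solves the continuity equation with Neumann boundary conditions on $\partial\La_n$. Since the density $\rm_t$ is bounded below by $1-\theta(t)$ (coming purely from the Lebesgue component), the Benamou--Brenier action is bounded by
\begin{equation*}
\int_0^1\!\!\int_{\La_n} |v_t|^2\,\mm_t \leq \Bigl(\int_0^1 \frac{(\theta'(t))^2}{1-\theta(t)}\,dt\Bigr)\,\frac{1}{\cd^2}\int_{\La_n} |\Eleta|^2 \leq C\int_{\La_n}|\Eleta|^2,
\end{equation*}
which yields a coupling $\Pi_1$ between $\bX_\eta$ and $\Leb_{\La_n}$ with quadratic cost at most $C\int_{\La_n} |\Eleta|^2$. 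Note that I do not need an $L^\infty$ bound on $\bX_\eta$: only the lower bound on $\rm_t$, for free from the Lebesgue part, is used.

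\textbf{Step 3 (gluing and conclusion).} I then glue $\Pi_2$ and $\Pi_1$ through their common marginal $\bX_\eta$ into a coupling $\Pi$ between $\bX$ and $\Leb_{\La_n}$, and use the elementary inequality $|x-z|^2 \leq 2|x-y|^2 + 2|y-z|^2$ to bound the combined quadratic cost by $2(\text{cost of }\Pi_2) + 2(\text{cost of }\Pi_1)$. Normalizing by $n^\d$ delivers exactly~\eqref{eq:W2EnergElecConfig}.

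\textbf{Expected obstacle.} There is no single hard step: Step~2 is the substantive one, but it is essentially the proof of Lemma~\ref{lem:W2Hm1} transported verbatim to $\La_n$ with the field $\Eleta$ as divergence witness. The only mild care needed is (i) checking that $\Eleta$ is genuinely screened on $\partial \La_n$, which uses the assumption $\dist(\bX,\partial\La_n) > \eta$ together with Claim~\ref{claim:propertiesOF}, and (ii) justifying that an $L^\infty$ bound on the density of $\bX_\eta$ is not required here, the potential clustering of points within distance $\eta$ being harmless since only the Lebesgue-side lower bound on $\rm_t$ enters the estimate.
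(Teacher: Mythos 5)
Your proposal is correct and follows essentially the same route as the paper: the paper likewise passes through the screened truncated field $\Eleta$ (compatible with $\bXeta$ by Claim~\ref{claim:propertiesOF}), applies the Benamou--Brenier argument of Lemma~\ref{lem:W2Hm1} (up to rescaling to the unit box) to couple $\bXeta$ with $\Leb_{\La_n}$ at cost $C\int_{\La_n}|\Eleta|^2$, and then pays the $n^\d\eta^2 \leq n^\d$ cost of moving the spread-out charges back to their centers, which is absorbed in the $+1$. Your observation that only the Lebesgue-side lower bound on $\rm_t$ enters the estimate (so no uniform density bound on $\bXeta$ is needed) is accurate and consistent with how the paper uses the lemma.
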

\begin{proof}[Proof of Corollary \ref{coro:W2Hm1bigbox}]
Since $\El$ is compatible with $\bX$, the regularization $\Eleta$ is compatible with $\bXeta$ (Claim \ref{claim:propertiesOF}) and can play the role of $v$ in \eqref{vmuleb} (the multiplicative constant $\cd$ is irrelevant). Thanks to Lemma \ref{lem:W2Hm1} and a simple scaling, this gives an upper bound on the Wasserstein distance between $\bXeta$ and $\Leb$. Then the cost of transporting the spread out charges $\bX_{\eta}$ to their centers is of order $n^\d \times \eta^2$.
\end{proof}

\subsection{Screening of electric fields in large boxes}
\label{sec:screening}
In order to apply Corollary \ref{coro:W2Hm1bigbox} to stationary point processes, we would like to be able to work with the restriction of configurations (and fields) to “large boxes”. Observe that Corollary \ref{coro:W2Hm1bigbox} requires a \emph{screened} electric field, and (which is in fact a consequence of the mere existence of such a screened field) that the number of points of $\bX$ in $\La_n$ agree perfectly with the volume of $\La_n$ - a fact that is absolutely not guaranteed for random samples.

To fix this, the key tool is an approximation procedure in arbitrarily large boxes, provided by the “screening” of Sandier-Serfaty, here in the general version of \cite[Prop. 6.1]{petrache2017next} (see also \cite[Prop. 5.6]{rougerie2016higher})
\begin{lemma}
\label{lem:screening}
There exists $R_0$, $\eta_0 > 0$, and $C$ (all depending only on $\d$) such that the following holds.

Let $\epsilon \in (0, \hal)$ and let $R$ be such that $R \geq \frac{R_0}{\epsilon^2}$. Assume that $|\La_R|$ is an integer. Let $\bX$ be a point configuration in $\La_R$ and let $\El$ be an electric field compatible with $\bX$ in $\La_R$. Under the following \emph{screenability condition}:
\begin{equation}
\label{ScreenCond}
\int_{\La_R} |\El_{\eta_0}|^2 \leq R^\d \times \left( \epsilon^{2\d + 3} \frac{R}{R_0}  \right), 
\end{equation}
there exists a point configuration $\tbX$ in $\La_R$ and a local electric field $\tEl$ such that:
\begin{enumerate}
   \item $\tbX$ has exactly $|\La_R|$ points in $\La_R$.
   \item $\tbX$ and $\bX$ coincide on $\La_{R (1 - \epsilon)}$.
   \item All the points of $\tbX$ are at distance at least $\eta_0$ from $\partial \La_R$.
   \item $\tEl$ is compatible with $\tbX$ in $\La_n$ and is \emph{screened}.
   \item We have:
   \begin{equation}
   \label{EnergieTbX}
   \int_{\La_R} |\tEl_{\eta_0}|^2 \leq C \int_{\La_R} |\El_{\eta_0}|^2.
   \end{equation}
\end{enumerate}
\end{lemma}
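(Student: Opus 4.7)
The plan is to adapt the Sandier-Serfaty screening construction in the form developed in \cite{petrache2017next, rougerie2016higher}. I would leave $\bX$ and $\El$ untouched inside a slightly smaller box $\La_{R'}$ with $R' \approx R(1-\epsilon)$, and perform all modifications inside the thin boundary shell $\La_R \setminus \La_{R'}$. Specifically I would: (i) bring the total point count to exactly $|\La_R|$ by adding or removing a controlled number of points in the shell; (ii) keep a safety margin $\eta_0$ between the new points and $\partial \La_R$; and (iii) replace $\El$ in the shell by a screened field whose inner normal trace matches that of $\El$ on $\partial \La_{R'}$. Gluing via Claim~\ref{claim:propertiesOF} will then produce a field $\tEl$ compatible with $\tbX$ on all of $\La_R$ and screened along $\partial \La_R$.

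First I would use a Chebyshev / mean-value argument over $R' \in [R(1-\epsilon), R(1-\epsilon/2)]$: for a suitable choice of $R'$ in this range, both the $L^2$-norm of the normal trace of $\Eleta$ on $\partial \La_{R'}$ and the magnitude of the discrepancy $\bigl||\bX \cap \La_{R'}| - |\La_{R'}|\bigr|$ (itself controlled by that trace via Stokes' formula applied to $\El$) are of order $\epsilon^{-1}$ times the average shell energy of $\Eleta$. The screenability assumption \eqref{ScreenCond} together with $R \geq R_0/\epsilon^2$ then guarantees that this discrepancy is negligible compared to the volume $|\La_R \setminus \La_{R'}| \asymp \epsilon R^{\d}$ of the shell, so that it can be absorbed into a locally uniform filling configuration.

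Next, to build $\tbX$, I would fix inside the shell a uniform arrangement of new points (say, from a unit lattice, inset by $2\eta_0$ from $\partial \La_R$), with cardinality chosen so that $|\tbX \cap \La_R| = |\La_R|$. Then, solving the Neumann problem
\begin{equation*}
-\Delta u = \cd \bigl( \tbX - \Leb \bigr) \text{ in } \La_R \setminus \La_{R'}, \quad \partial_\nu u = \El \cdot \vec{n} \text{ on } \partial \La_{R'}, \quad \partial_\nu u = 0 \text{ on } \partial \La_R,
\end{equation*}
yields a local field $\Eext := \nabla u$ on the shell, whose solvability is ensured by the charge-neutrality enforced at the previous step. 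Setting $\tEl := \El \1_{\La_{R'}} + \Eext \1_{\La_R \setminus \La_{R'}}$ and truncating at scale $\eta_0$ produces the desired screened, compatible field $\tEleta$.

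The main obstacle is controlling the regularized energy of $\Eext$ on the shell. I would bound it by three contributions: the $L^2$-cost of the Neumann data on $\partial \La_{R'}$ (turned into a volume bound by elliptic estimates in a shell of thickness $\epsilon R$), the intrinsic energy of the auxiliary lattice (linear in the shell volume), and a cross term treated by Cauchy-Schwarz. The specific exponent $2\d+3$ in \eqref{ScreenCond} is not arbitrary: it is calibrated precisely so that the $\epsilon^{-1}$ loss from the trace selection, the $\epsilon R$ loss from the Poincaré inequality on the shell, and the $\epsilon R^\d$ cost of filling the lattice all collectively yield a universal constant in \eqref{EnergieTbX}, independent of $\epsilon$ and $R$.
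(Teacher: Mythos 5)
The paper does not actually prove this lemma: it quotes it verbatim from the screening result of Petrache--Serfaty \cite[Prop.\ 6.1]{petrache2017next} (see also \cite[Prop.\ 5.6]{rougerie2016higher}), so you are reconstructing the cited argument rather than matching a proof written in the paper. Your overall architecture — keep $\bX$ and $\El$ in an inner box $\La_{R'}$ chosen by a mean-value/Chebyshev argument, control the discrepancy by Stokes, fill the shell to restore neutrality with a safety margin $\eta_0$, solve an auxiliary problem with matching normal trace, and glue — is the right skeleton, and those steps are sound.

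The genuine gap is in the energy estimate for the shell field $\Eext$. You fill the shell with a \emph{uniform} configuration and solve a \emph{single global} Neumann problem on $\La_R\setminus\La_{R'}$, asserting that the relevant elliptic/trace constant of the shell is of order $\epsilon R$ (its thickness). It is not: the shell has lateral diameter of order $R$, and for mean-zero $v$ the sharp constant in $\|v\|^2_{L^2(\partial\La_{R'})}\le C\,\|\nabla v\|^2_{L^2(\La_R\setminus\La_{R'})}$ is of order $R/\epsilon$ (test with $v$ given by a lateral coordinate running along the shell). Equivalently, if the flux $\El\cdot\vec{n}$ entering through $\partial\La_{R'}$ is locally unbalanced — which the mean-value choice of $R'$ does not prevent — the minimal field compensating it against a uniform background must transport charge laterally over distances up to $R$ through a cross-section of thickness $\epsilon R$, and the resulting energy is only bounded by $\epsilon^{-2}\int|\El_{\eta_0}|^2$, not by $C\int|\El_{\eta_0}|^2$; so your calibration of the exponent $2\d+3$ does not close. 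This is precisely why the Sandier--Serfaty/Petrache--Serfaty construction subdivides the shell into cells $H_i$ of side $\ell$ with $1\ll\ell\ll\epsilon R$ (e.g.\ $\ell=\epsilon^2 R$), assigns to each cell a \emph{corrected} filling density $m_i=1+\frac{1}{|H_i|}\int_{\partial H_i\cap\partial\La_{R'}}\El\cdot\vec{n}$ so that the incoming flux is compensated locally, verifies $|m_i-1|\ll1$ from the screenability condition, and solves cell-by-cell elliptic problems with energies $O\bigl(\ell\int_{\partial H_i\cap\partial\La_{R'}}|\El_{\eta_0}\cdot\vec{n}|^2\bigr)+O(|H_i|)$, whose sum is $O(\epsilon)\int|\El_{\eta_0}|^2$ plus the filling cost. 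The same localization, with the densities $m_i$, is exactly what the paper implements in its Wasserstein analogue, Lemma \ref{lem:ScreeningWass}; your sketch needs it too.
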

\begin{remark}
\label{remark_constant_screening1}
The screening condition as written in \eqref{ScreenCond} is not optimal because we have condensed two conditions (see \cite[(6.2)]{petrache2017next}) into a single one. However, this simplifies the exposition and is enough for our purposes. Also, the constant $C$ in \eqref{EnergieTbX} is in fact of the form $1 + C' \epsilon$, but we are not trying here to derive optimal multiplicative constants for our inequalities.
\end{remark}

Let us also record the following fact, which is not part of the original screening statement but which follows directly from items 3., 4., 5. of Lemma \ref{lem:screening} in view of Corollary \ref{coro:W2Hm1bigbox}.
\begin{corollary}
With the assumptions and the notation of Lemma \ref{lem:screening}, we can ensure that there exists a coupling $\Pi(\cdot, \cdot ; \tbX)$ between $\tbX$ and $\Leb_{\La_R}$ such that:
\begin{equation}
\label{eq:WassScreening}
\frac{1}{R^\d} \int_{\La_R \times \La_R} |x-y|^2 \dd \Pi(x,y ; \tbX) \leq C \left(\frac{1}{R^\d} \int_{\La_R} |\El_{\eta_0}|^2 + 1\right).
\end{equation}
\end{corollary}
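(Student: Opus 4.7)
The plan is to combine Corollary \ref{coro:W2Hm1bigbox} and Lemma \ref{lem:screening} in the most direct way possible. The screening lemma hands us a modified configuration $\tbX$ and a modified field $\tEl$ which together satisfy precisely the hypotheses required to apply Corollary \ref{coro:W2Hm1bigbox}: namely $\tbX$ has exactly $|\La_R| = R^\d$ points (item 1.), these points sit at distance $\geq \eta_0$ from $\partial \La_R$ (item 3.), and $\tEl$ is a screened local electric field compatible with $\tbX$ (item 4.). So no new construction is needed — the corollary is essentially an immediate repackaging.

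Concretely, I would first apply Corollary \ref{coro:W2Hm1bigbox} with $n = R$, configuration $\tbX$ in place of $\bX$, field $\tEl$ in place of $\El$, and regularization length $\eta = \eta_0$. This produces a coupling $\Pi(\cdot, \cdot\, ;\tbX)$ between $\tbX$ and $\Leb_{\La_R}$ (both viewed as measures of mass $R^\d$ on $\La_R$) that satisfies
\begin{equation*}
\frac{1}{R^\d} \int_{\La_R \times \La_R} |x-y|^2 \,\dd \Pi(x,y\,;\tbX) \leq C\left(\frac{1}{R^\d} \int_{\La_R} |\tEl_{\eta_0}|^2 + 1\right).
\end{equation*}

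Then I would chain this with the energy control \eqref{EnergieTbX} provided by item 5.\ of Lemma \ref{lem:screening}, namely $\int_{\La_R} |\tEl_{\eta_0}|^2 \leq C \int_{\La_R} |\El_{\eta_0}|^2$, to replace the energy of the screened field by the energy of the original field. Dividing through by $R^\d$ and adjusting the multiplicative constant yields exactly \eqref{eq:WassScreening}.

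There is essentially no obstacle: the only thing worth double-checking is that the spreading cost $R^\d \times \eta_0^2$ incurred when passing from $\tbX_{\eta_0}$ back to $\tbX$ inside the proof of Corollary \ref{coro:W2Hm1bigbox} is absorbed into the additive $+1$ on the right-hand side of \eqref{eq:WassScreening}, which it is since $\eta_0$ depends only on $\d$. In short, the corollary is just Lemma \ref{lem:screening} composed with Corollary \ref{coro:W2Hm1bigbox}, together with the remark that all $\d$-dependent constants can be harmlessly grouped into a single $C$.
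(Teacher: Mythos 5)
Your proposal is correct and is exactly the paper's argument: the paper itself introduces this corollary as following \emph{directly} from items 3, 4, 5 of Lemma \ref{lem:screening} combined with Corollary \ref{coro:W2Hm1bigbox}, which is precisely the composition you carry out (apply Corollary \ref{coro:W2Hm1bigbox} to $\tbX$ and $\tEl$ with $\eta = \eta_0$, then use \eqref{EnergieTbX} to pass from $\tEl_{\eta_0}$ to $\El_{\eta_0}$). Nothing further is needed.
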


\subsection{An approximate sequence of point processes}
We may reduce the proof of Theorem \ref{theo:EUWD} to the following proposition.
\begin{proposition}
\label{prop:approximate}
Let $\bbX$ be a stationary point process such that $\Coul_1(\bbX)$ is finite. Then there exists a sequence $(\bbX_n)_{n \geq 1}$ of stationary point processes such that:
\begin{enumerate}
   \item $\bbX_n$ converges in distribution to $\bbX$ as $n \to \infty$ \corT{(with respect to the vague topology on point configurations, seen as Radon measures)}.
   \item $\Ww^2_2(\bbX_n, \Leb) \leq C\left(\Coul_1(\bbX) + 1\right) (1+o_n(1))$, for some constant $C$ depending only on the dimension.
\end{enumerate}
\end{proposition}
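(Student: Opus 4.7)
The plan is to construct $\bbX_n$ by restricting $\bbX$ to a large box $\La_{R_n}$, applying the screening Lemma~\ref{lem:screening} to enforce the correct number of points and Neumann boundary conditions, periodizing, and then shifting by a uniform random variable on $\La_{R_n}$ to restore stationarity. Corollary~\ref{coro:W2Hm1bigbox} will then convert the screened field directly into a Wasserstein coupling of controlled cost.

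Pick a stationary electric field $\bEl$ compatible with $\bbX$ with $\E[|\bEl_1(0)|^2] \leq \Coul_1(\bbX)+1$, and apply Lemma~\ref{lem:twoproperties} to obtain $\E[|\bEl_{\eta_0}(0)|^2] \leq C(\Coul_1(\bbX)+1)$ at the scale $\eta_0$ required by Lemma~\ref{lem:screening}. Choose integer sequences $R_n \to \infty$ and $\epsilon_n \to 0$ with $R_n \geq R_0\epsilon_n^{-2}$ and $\epsilon_n^{2\d+3} R_n/R_0 \to \infty$ (for instance $\epsilon_n = R_n^{-1/(4\d+7)}$). By stationarity $\E\bigl[\int_{\La_{R_n}}|\bEl_{\eta_0}|^2\bigr] \leq C R_n^\d(\Coul_1(\bbX)+1)$, so Markov's inequality yields a good event $A_n$ with $\P(A_n)\to 1$ on which the screenability condition~\eqref{ScreenCond} holds for $(\bbX\cap\La_{R_n}, \bEl)$.

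On $A_n$, apply Lemma~\ref{lem:screening} pathwise to obtain a screened pair $(\tbX,\tEl)$ with exactly $|\La_{R_n}|$ points, agreeing with $\bbX$ on $\La_{R_n(1-\epsilon_n)}$, all of them at distance $\geq \eta_0$ from $\partial \La_{R_n}$, and with $\int_{\La_{R_n}}|\tEl_{\eta_0}|^2 \leq C\int_{\La_{R_n}}|\bEl_{\eta_0}|^2$. Extend $\tbX$ by periodicity to a locally finite configuration on $\Rd$ (which is possible because the number of points equals the volume of the period and points are away from the boundary) and define $\bbX_n$ to be this periodization shifted by an independent uniform variable $\tau_n \in \La_{R_n}$; on $A_n^c$ let $\bbX_n$ be an independent sample of $\PZ$. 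The resulting random measure is stationary.

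For the Wasserstein bound, apply Corollary~\ref{coro:W2Hm1bigbox} on $A_n$ to produce a coupling between $\tbX$ and $\Leb_{\La_{R_n}}$ with total cost at most $C\bigl(\int_{\La_{R_n}}|\bEl_{\eta_0}|^2 + R_n^\d\bigr)$. Since total mass is conserved in one period, this coupling extends periodically to one between the periodization of $\tbX$ and $\Leb$, and shifting by $\tau_n$ produces a stationary coupling between $\bbX_n$ and $\Leb$ whose cost-per-unit-volume equals the finite-volume cost divided by $R_n^\d$. Taking expectation and using stationarity of $\bEl$ gives $\E[\w_2^2(\bbX_n,\Leb)\mathbf{1}_{A_n}] \leq C(\Coul_1(\bbX)+1)$, while the $A_n^c$ contribution is $O(\P(A_n^c)) = o(1)$ since $\w_2^2$ between a lattice sample and $\Leb$ is bounded. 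Convergence in distribution follows because, for any compact $K$, the event $A_n \cap \{K-\tau_n \subset \La_{R_n(1-\epsilon_n)}\}$ has probability tending to $1$ and, on it, $\bbX_n$ restricted to $K$ coincides with $\theta_{\tau_n}\bbX$ restricted to $K$, whose law equals that of $\bbX$ restricted to $K$ by stationarity. The main obstacle is the probabilistic control of the screening step: one must calibrate $(R_n,\epsilon_n)$ so that screenability holds with overwhelming probability while $\epsilon_n$ remains small enough for the boundary modifications to be negligible, and so that the finite-volume Benamou--Brenier coupling supplied by Corollary~\ref{coro:W2Hm1bigbox} genuinely assembles, after periodization and random shift, into a stationary coupling in the sense of~\eqref{eq:Wass_p_def}.
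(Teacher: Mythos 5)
Your proposal is correct and follows the same overall strategy as the paper: choose a near-optimal stationary field, verify the screenability condition \eqref{ScreenCond} with high probability via Markov's inequality and stationarity, apply Lemma~\ref{lem:screening} pathwise, stationarize by tiling plus an independent uniform shift, and convert the screened field into a coupling via Corollary~\ref{coro:W2Hm1bigbox}. The one genuine difference is the stationarization device. The paper conditions on the good event, takes \emph{i.i.d.\ copies} of the conditioned screened configuration indexed by $n\Zd$, pastes them into the tiles, and then must invoke a law of large numbers (Claim~\ref{claim:CoutReparti}) to identify the $\limsup$ in \eqref{wpperunit} of the glued coupling with the expected per-tile cost. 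You instead \emph{periodize a single screened box} (and substitute a lattice sample on the bad event), which makes the cost per unit volume of the glued coupling deterministic given the realization — exactly the per-period cost divided by $R_n^\d$ — so no LLN is needed; the price is that $\bbX_n$ is a periodic (hence long-range correlated) process, but this is harmless since both the Wasserstein bound and convergence in distribution are local statements, and your argument for the latter (the window $K-\tau_n$ lands in the unmodified region $\La_{R_n(1-\epsilon_n)}$ with probability tending to $1$, where $\bbX_n$ coincides with a translate of $\bbX$) is sound. Your treatment of the bad event by expectation against $\mathbf{1}_{A_n}$ rather than by conditioning is also a legitimate, slightly cleaner variant of the paper's $(1+o_n(1))$ bookkeeping.
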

Since $\Ww^2_2$ is lower semi-continuous on the space of stationary point processes (\cite[Prop. 2.10.]{erbar2023optimal}), this implies~\eqref{EUWD} and proves Theorem \ref{theo:EUWD}.
\newcommand{\shift}{\tau}
\newcommand{\bshift}{\tau^\sbullet}
\newcommand{\bXav}{\bX^{\mathrm{av}}}
\newcommand{\bXglob}{\bX^{\mathrm{glob}}}
\newcommand{\bbXav}{\bX^{\sbullet, \mathrm{av}}}
\newcommand{\bbXglob}{\bX^{\sbullet, \mathrm{glob}}}
\newcommand{\bbXk}{\bX^{\sbullet, (k)}}
\begin{proof}[Proof of Proposition \ref{prop:approximate}]
Let $\bbX$ be a stationary point process.

\paragraph{Step 1. Applying the screening.}
\begin{lemma}
\label{lem:applyscreening}
There exists some constant $c > 0$ (depending on the dimension) such that the following holds. For all $n \geq 1$, there exists an event $\Omega_n$ and a random point configuration $\tbX_n$, such that:
\begin{enumerate}
   \item $\lim_{n \to \infty} \Proba(\bbX \in \Omega_n) = 1$
   \item Almost every configuration $\btbX_n$ has exactly $n^\d$ points in $\La_n$.
   \item $\btbX_n \cap \La_{n - n^{1-c}} =\bbX \cap \La_{n - n^{1-c}}$ 
   on $\Omega_n$.
   \item For almost every configuration $\btbX_n$, there exists a coupling $\Pi(\cdot, \cdot ; \btbX_n)$ between $\btbX_n$ and $\Leb_{\La_n}$ such that:
   \begin{equation}
   \label{W2tPpn}
   \frac{1}{n^\d} \E \left[ \iint_{\La_n \times \La_n} |x-y|^2 \dd \Pi(x,y; \btbX_n) \right] \leq C \left( \Coul_1(\bbX) + 1\right).
   \end{equation}
\end{enumerate}
\end{lemma}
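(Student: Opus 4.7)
The strategy is to apply the screening Lemma \ref{lem:screening} to $\bbX$ and a near-optimal compatible stationary field, both restricted to $\La_n$, after checking that the screenability condition \eqref{ScreenCond} holds on an event of probability tending to $1$. Fix a stationary electric field $\bEl$ compatible with $\bbX$ with $\E[|\bEl_{\eta_0}(0)|^2]\leq \Coul_{\eta_0}(\bbX)+1$; by Lemma \ref{lem:twoproperties}(2) this is in turn bounded by $K(\Coul_1(\bbX)+1)$ for some constant $K$ depending only on $\d$ and $\eta_0$. Choose $c \in (0, 1/(2\d+3))$ and set $\epsilon_n := n^{-c}$, so that $R_0/\epsilon_n^2 = R_0 n^{2c} \leq n$ for $n$ large, and Lemma \ref{lem:screening} applies on $\La_n$ with parameter $\epsilon_n$. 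The screenability condition \eqref{ScreenCond} reads $\int_{\La_n}|\bEl_{\eta_0}|^2 \leq n^{\d+1-c(2\d+3)}/R_0$; let $\Omega_n$ be the event that this holds. By Markov's inequality and stationarity of $\bEl$,
\[
\P(\Omega_n^c) \leq \frac{R_0\, n^\d\, \E[|\bEl_{\eta_0}(0)|^2]}{n^{\d+1-c(2\d+3)}} = \frac{R_0\, \E[|\bEl_{\eta_0}(0)|^2]}{n^{1-c(2\d+3)}} \xrightarrow[n\to\infty]{} 0.
\]

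On $\Omega_n$, apply Lemma \ref{lem:screening} to $(\bbX\cap\La_n,\bEl|_{\La_n})$ with parameter $\epsilon_n$, producing $\btbX_n$ and a screened compatible field $\btEl_n$ satisfying conclusions 1--5 of that lemma. On $\Omega_n^c$, take $\btbX_n$ to be a fixed deterministic configuration of exactly $n^\d$ points in $\La_n$ (e.g.\ a unit lattice) whose $2$-Wasserstein distance per unit volume to $\Leb_{\La_n}$ is bounded by a universal constant $C_0$. This settles items 1 and 2 of Lemma \ref{lem:applyscreening}; since on $\Omega_n$ the configurations $\btbX_n$ and $\bbX$ coincide on $\La_{n(1-\epsilon_n)}=\La_{n-n^{1-c}}$ by conclusion 2 of the screening lemma, item 3 follows immediately.

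For item 4, on $\Omega_n$ the corollary of Lemma \ref{lem:screening} (i.e.\ \eqref{eq:WassScreening}) provides, measurably in $\btbX_n$, a coupling $\Pi(\cdot,\cdot;\btbX_n)$ between $\btbX_n$ and $\Leb_{\La_n}$ with
\[
\frac{1}{n^\d}\iint_{\La_n\times\La_n}|x-y|^2\, d\Pi(x,y;\btbX_n) \leq C\left(\frac{1}{n^\d}\int_{\La_n}|\bEl_{\eta_0}|^2 + 1\right),
\]
while on $\Omega_n^c$ we take $\Pi$ to be the Voronoi-type coupling between the fixed lattice and $\Leb_{\La_n}$, of normalized cost at most $C_0$. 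Taking expectation and using stationarity to write $\E[\int_{\La_n}|\bEl_{\eta_0}|^2] = n^\d\E[|\bEl_{\eta_0}(0)|^2]$ gives $\frac{1}{n^\d}\E[\iint|x-y|^2 d\Pi] \leq C(\Coul_1(\bbX)+1) + C_0 \P(\Omega_n^c) = C(\Coul_1(\bbX)+1)(1+o_n(1))$, which is \eqref{W2tPpn}. The technical balance of the proof lies in the choice $\epsilon_n = n^{-c}$: $c>0$ ensures the annular modification region $\La_n \setminus \La_{n-n^{1-c}}$ has width $o(n)$ (making item 3 meaningful), while $c < 1/(2\d+3)$ ensures the screenability condition becomes a requirement on $\int_{\La_n}|\bEl_{\eta_0}|^2$ growing strictly faster than its stationary mean $n^\d\E[|\bEl_{\eta_0}(0)|^2]$, so that Markov's inequality alone suffices to force $\P(\Omega_n^c)\to 0$.
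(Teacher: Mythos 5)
Your proof is correct and follows essentially the same route as the paper: pick a near-optimal stationary compatible field, define $\Omega_n$ as the screenability event for Lemma \ref{lem:screening} with $\epsilon_n = n^{-c}$, use Markov's inequality plus stationarity to get $\P(\Omega_n)\to 1$, and then invoke the Wasserstein corollary \eqref{eq:WassScreening} of the screening lemma to control the coupling cost. The only (harmless) deviations are cosmetic: your range $c<1/(2\d+3)$ versus the paper's choice $c=\frac{1}{2\d+4}$, your deterministic fallback on $\Omega_n^c$ in place of the paper's conditioning on $\Omega_n$, and your explicit use of Lemma \ref{lem:twoproperties}(2) to pass from $\Coul_1$ to the $\eta_0$-truncated energy, which the paper leaves implicit.
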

\begin{proof}[Proof of Lemma \ref{lem:applyscreening}]
Since we assume that $\Coul_1(\bbX)$ is finite, there exists a stationary random field $\bEl$ compatible with $\bbX$ such that:
\begin{equation*}
\Coul_1(\bbX) = \E[|\bEl_{1}(0)|^2] < + \infty.
\end{equation*}

Choose $c := \frac{1}{2\d + 4} > 0$ and for $n \geq 1$, let $\epsilon_n := n^{-c}$. Let $\Omega_n$ be the event corresponding to the screening condition \eqref{ScreenCond} with $R = n$ and $\epsilon = \epsilon_n$ (note that $n \geq \frac{R_0}{\epsilon_n^2}$ for $n$ large enough so the first condition of the screening lemma is satisfied). Then Markov's inequality gives:
\begin{equation*}
\Proba\left(\bbX \notin \Omega_n\right) \leq \frac{ \E\left[ \int_{\La_n} |\bEl_{\eta_0}|^2 \right]   }{ n^\d \times \epsilon_n^{2\d + 3} \frac{n}{R_0} } =  \frac{ \E \left[ \int_{\La_n} |\bEl_{\eta_0}|^2 \right]   }{ n^\d } \times o_n(1),
\end{equation*}
because $\epsilon_n$ has been chosen such that $\epsilon_n^{2\d + 3} n \to + \infty$. Moreover, since $\Coul_1(\bbX)$ is finite, we know that: 
\begin{equation*}
\limsup_{n \to \infty} \frac{1}{n^\d} \E \left[ \int_{\La_n} |\bEl_{\eta_0}|^2 \right] \corT{= \E \left[ |\bEl_{\eta_0}(0)|^2 \right]} < + \infty,
\end{equation*}
(indeed using \corT{stationarity and Fubini's theorem, we have $\E \left[ \int_{\La_n} |\bEl_{\eta_0}|^2 \right] = \E \left[ |\bEl_{\eta_0}(0)|^2 \right]$ for any $n$)} and we thus obtain $\lim_{n \to + \infty} \Proba(\bbX \notin \Omega_n) = 0$, which proves the first item.

If $\bX \in \Omega_n$, we can apply the screening construction of Lemma \ref{lem:screening} in $\La_n$, and we denote by $\tbX$ the resulting point configuration. 
{ We define a new random point field by putting $\btbX_n=\tbX$ on $\Omega_n$ and $\btbX_n=\sum_{x\in \La_n \cap \Z^2} \delta_x$ on $\Omega_n^c$.}

By design, the screened configurations $\tbX$ have exactly $n^\d$ points in $\La_n$ and coincide with the original configuration in $\La_{n - \epsilon_n n}$. This proves the second and third items. 

Finally, using \eqref{eq:WassScreening} \corT{on $\Omega_n$ and a rough estimate on the distance between our choice of $\btbX_n$ and $\Leb_{\La_n}$ on $\Omega_n^c$,} and taking the expectation we get the existence of a random coupling $\bPi(\cdot, \cdot ; \btbX_n)$ coupling $\btbX_n$ and $\Leb_{\La_n}$, such that:
\begin{equation*}
\frac{1}{n^\d} \E \left[ \iint_{\La_n \times \La_n} |x-y|^2 \dd \bPi(x,y; \btbX_n) \right] \leq C \left(\frac{1}{n^\d}  \E \left[ \int_{\La_n} |\bEl_{\eta_0}|^2 \right] +1 \right)
\end{equation*}
which proves \eqref{W2tPpn}.
\end{proof}

\paragraph{Step 2. Forming a stationary approximation.}
For $n \geq 1$, let $\btbX_n$ be given by Lemma \ref{lem:applyscreening}. We use it to form a stationary point process using a standard procedure (see e.g.\ \cite[Sec.\ 5]{leble2016logarithmic} for an extensive use in the same context) as follows:
\begin{enumerate}
   \item We take a family of i.i.d.\ random variables $\{\bbXk_n\}_{k \in n \Zd}$ distributed as $\btbX_n$.
   \item We pave $\Rd$ by copies of $\La_n$ centered on the lattice points $k \in n \Zd$. We also let $\bshift_n$ be some random variable, independent from the previous ones and uniformly distributed in $\La_n$.
   \item We define a global random point configuration $\bbXglob_n$ by:
   \begin{equation}
   \label{def:Xglob}
    \bbXglob_n := \bigcup_{k \in n \Zd} \bigcup_{x \in \bbXk_n} \{x + k\},
   \end{equation}
   which amounts to pasting the random point configuration $\bbXk_n$ onto the translated square $k + \La_n$.
   \item We define {$\bbX_n=\bbXav_n$} as $\bbXglob_n + \bshift_n$, which amounts to shifting $\bbXglob$ by an independent random vector uniformly distributed in $\La_n$.
\end{enumerate}

\paragraph{Step 3. Conclusion.}
It remains to check that {$\bbXav_n$} satisfies the properties stated in Proposition \ref{prop:approximate}. By construction, it is stationary. Moreover, by the first and third item of Lemma \ref{lem:applyscreening}, we can guarantee that it converges in distribution to $\bbX$ as $n \to \infty$.

For $k \in n \Zd$, let $\Pi(\cdot, \cdot; \bbXk_n)$ be the (random) coupling between $\bbXk_n$ (seen as a (random) measure on $\La_n$) and $\Leb_{\La_n}$ given by Lemma \ref{lem:applyscreening}. We can easily combine those couplings to form a (random) coupling $\Pi(\cdot, \cdot; \bbXglob_n)$ between $\bbXglob_n$ (seen as a (random) measure on $\Rd$) and $\Leb$. This is due to the fact that two couplings between two measures supported on two squares that have a side in common will not interfere with each other - this is somehow equivalent to “screened electric fields” which can be glued together along a common side.

Finally, if $\tau \in \La_n$ we can use $\Pi(\cdot, \cdot; \bbXglob_n)$ to define a coupling $\Pi_\tau(\cdot, \cdot; \bbXglob_n)$ between $\bbXglob_n + \tau$ and $\Leb$, through a push forward $\Pi(\cdot, \cdot; \bbXglob_n)$ by the map $(x,y) \mapsto (x+\tau, y+\tau)$.

\newcommand{\Rr}{\mathcal{R}}

\begin{claim}
\label{claim:CoutReparti}
For all $t \in \La_n$, the following convergence holds almost surely and in $L^1$:
\begin{equation}
\label{coutreparti}
\lim_{R \to + \infty} \frac{1}{|\La_R|}  \iint_{\La_R \times \Rd} |x-y|^2 \dd \Pi_t(x, y; \bbXglob_n) =  \frac{1}{n^\d} \E\left[\iint_{\La_n \times \La_n} |x-y|^2 \dd \Pi(x,y ; \btbX_n) \right],
\end{equation}
where $\Pi(\cdot, \cdot ; \btbX_n)$ is as given in Lemma \ref{lem:screening}.
\end{claim}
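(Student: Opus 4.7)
My plan is to recognize the integral as (essentially) an average of independent and identically distributed contributions indexed by the paving squares, and then invoke the strong law of large numbers in both its almost sure and $L^1$ versions. The key structural point is that, by construction, the coupling $\Pi(\cdot,\cdot ; \bbXglob_n)$ decomposes as a disjoint sum over the squares of the paving: indeed, the building blocks $\Pi(\cdot,\cdot;\bbXk_n)$ furnished by Lemma~\ref{lem:applyscreening} couple $\bbXk_n$ (which has exactly $n^\d$ points in $\La_n$) to $\Leb_{\La_n}$, and are thus supported on $\La_n\times\La_n$. Writing $S_k:=k+\La_n$ for $k\in n\Zd$, the shifted coupling $\Pi_t(\cdot,\cdot;\bbXglob_n)$ is a disjoint sum of couplings $\Pi_{k,t}$, each supported on $(S_k+t)\times(S_k+t)$.

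For each $k$, set
\begin{equation*}
Y_k := \iint_{(S_k+t)\times(S_k+t)} |x-y|^2\, \dd\Pi_{k,t}(x,y).
\end{equation*}
By translation invariance and the independence of the $\bbXk_n$'s, the $(Y_k)_{k\in n\Zd}$ are i.i.d.\ with common law equal to that of $\iint_{\La_n\times\La_n}|x-y|^2\,\dd\Pi(x,y;\btbX_n)$. In particular, by the fourth item of Lemma~\ref{lem:applyscreening}, they are integrable with common mean
\begin{equation*}
\mu := \E\Big[\iint_{\La_n\times\La_n}|x-y|^2\,\dd\Pi(x,y;\btbX_n)\Big] < +\infty .
\end{equation*}

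Since the couplings are supported inside their respective squares, the integral we wish to compute sandwiches between an ``interior'' and a ``closure'' sum:
\begin{equation*}
\sum_{k\,:\,S_k+t\subset \La_R} Y_k \;\leq\; \iint_{\La_R\times\Rd}|x-y|^2\, \dd\Pi_t(x,y;\bbXglob_n) \;\leq\; \sum_{k\,:\,(S_k+t)\cap\La_R\neq\emptyset} Y_k.
\end{equation*}
Let $N_{\mathrm{in}}(R)$ and $N_{\mathrm{tot}}(R)$ be the numbers of indices appearing in the two sums. A straightforward count gives $N_{\mathrm{in}}(R)/|\La_R|\to n^{-\d}$ and $N_{\mathrm{tot}}(R)/|\La_R|\to n^{-\d}$ as $R\to\infty$, since the boundary discrepancy $N_{\mathrm{tot}}(R)-N_{\mathrm{in}}(R)$ is $O((R/n)^{\d-1})$. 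The multidimensional SLLN (and its $L^1$ version) for i.i.d.\ summands applied along increasing boxes in $n\Zd$ then yields
\begin{equation*}
\frac{1}{N_{\mathrm{in}}(R)}\sum_{k\,:\,S_k+t\subset \La_R} Y_k \;\xrightarrow[R\to\infty]{}\; \mu, \qquad \frac{1}{N_{\mathrm{tot}}(R)}\sum_{k\,:\,(S_k+t)\cap\La_R\neq\emptyset} Y_k \;\xrightarrow[R\to\infty]{}\; \mu,
\end{equation*}
both almost surely and in $L^1$. Multiplying by $N_{\cdot}(R)/|\La_R|\to n^{-\d}$, both bounds converge to $\mu/n^\d$, which is exactly the right-hand side of \eqref{coutreparti}, proving the claim.

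There is no serious obstacle here: the only point to check with some care is that the ``boundary'' squares (those intersecting $\partial\La_R$) do not spoil the averaging, and this is handled immediately by the fact that their number is of lower order than $|\La_R|/|\La_n|$, combined with the $L^1$ control on the $Y_k$ inherited from the screening bound \eqref{W2tPpn}.
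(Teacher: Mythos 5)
Your proposal is correct and follows essentially the same route as the paper's proof: sandwiching the integral between sums over the squares fully contained in $\La_R$ and those merely intersecting it, exploiting the i.i.d.\ structure of the blocks $\bbXk_n$ and the fact that each coupling is supported in its own square, and concluding with the strong law of large numbers (almost sure and $L^1$ versions) together with the observation that the boundary squares are of lower order $\O(R^{\d-1})$.
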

\begin{proof}[Proof of Claim \ref{claim:CoutReparti}]
 Let $\tau \in \La_n$ be fixed and let $R \geq 1$. There exists two families $\Rr, \Rr'$ of indices in $n\Zd$,  with $\Rr \subset \Rr'$, both having cardinality $\frac{R^\d}{n^\d} + \O(R^{\d-1})$ (with an implicit constant that may depend on $n$), such that:
\begin{equation}
\label{eq:inclusionLaR}
 \bigcup_{k \in \Rr} \left(\La_n + k + t\right) \subset \La_R \subset \bigcup_{k \in \Rr'} \left(\La_n + k + t\right),
\end{equation}
and these inclusions readily imply that:
\begin{multline*}
\sum_{k \in \Rr} \iint_{(\La_n + k + t) \times \Rd} |x-y|^2 \dd \Pi_t(x, y; \bbXglob_n)   \leq \iint_{\La_R \times \Rd} |x-y|^2 \dd \Pi_t(x, y; \bbXglob_n) \\
\leq \sum_{k \in \Rr'} \iint_{(\La_n + k + t) \times \Rd} |x-y|^2 \dd \Pi_t(x, y; \bbXglob_n). 
\end{multline*}
By construction of the coupling $\Pi_t(\cdot, \cdot; \bbXglob_n)$ as a gluing of couplings in each translated copy of $\La_n$, this amounts to:
\begin{equation*}
\sum_{k \in \Rr} \iint_{\La_n  \times \La_n} |x-y|^2 \dd \Pi(x, y; \bbXk_n)   \leq \iint_{\La_R \times \Rd} |x-y|^2 \dd \Pi_t(x, y; \bbXglob_n) \\
\leq \sum_{k \in \Rr'} \iint_{\La_n \times \La_n } |x-y|^2 \dd \Pi(x, y; \bbXk_n),
\end{equation*}
and since the $\bbXk_n$ are i.i.d. of law $\btbX_n$, the law of large numbers implies that both:
\begin{equation*}
\frac{1}{|\Rr|}  \sum_{k \in \Rr} \iint_{\La_n \times \La_n} |x-y|^2 \dd \Pi(x, y; \bbXk_n), \quad
 \frac{1}{|\Rr'|}  \sum_{k \in \Rr'} \iint_{\La_n \times \La_n} |x-y|^2 \dd \Pi(x, y; \bbXk_n) 
\end{equation*}
converge a.s. and in $L^1$ to $\E\left[\iint_{\La_n \times \La_n} |x-y|^2 \dd \Pi(x,y ; \btbX_n) \right]$ as $R \to \infty$. Since both families have cardinality equal to $\frac{R^\d}{n^\d} + \O(R^{\d-1})$, this proves the claim.
\end{proof} 
{Using $\Pi_\tau(x, y; \bbXav_n)$ as a competitor in the definition of $\Ww_2$, \eqref{W2tPpn} to estimate the right-hand side of \eqref{coutreparti}, and taking the expectation, we find:
\begin{multline*}
\E \left[ \inf_{\tilde\Pi \in \Cpl(\bbXav_n, \Leb)} \limsup_{R \to + \infty} \frac{1}{|\La_R|} \iint_{\La_R \times \Rd} |x-y|^2 \dd \tilde\Pi(x, y) \right] \\
\leq \E \left[ \limsup_{R \to + \infty} \frac{1}{|\La_R|} \iint_{\La_R \times \Rd} |x-y|^2 \dd \Pi_\tau(x, y; \bbXav_n) \right] \leq C \left(\Coul_1(\bbX) +  1\right)
\end{multline*}
which, in view of the definitions \eqref{wpperunit} and \eqref{WassLeb} concludes the proof of Proposition \ref{prop:approximate} and thus of Theorem \ref{theo:EUWD}.}
\end{proof}

\section{\texorpdfstring{$\WD$ plus density bound implies $\EU$}{Wass plus density bound implies Coul}}
\label{sec:proofWDdensity}
In the previous section, we have proven $\EU \implies \WD$. The proof of the converse implication, under the (quite strong) bounded density assumption \eqref{eq:densitybound}, follows a symmetrical route and uses two ingredients:
\begin{enumerate}
   \item A reverse inequality bounding the $H^{-1}$ norm of the difference of two measures in terms of their $2$-Wasserstein distance (cf.\ Lemma \ref{lem:ReverseH1W2}). This can be found in \cite[Prop. 3.1]{loeper2006uniqueness} and we simply reformulate it here in a form suited to our purposes.
   \item A new “screening lemma” (Lemma \ref{lem:ScreeningWass}) in $2$-Wasserstein space, allowing the construction of approximate point processes with good properties. This combines the techniques of Sandier-Serfaty with the Benamou-Brenier point of view on optimal transport.
\end{enumerate}
For simplicity when proving Lemma \ref{lem:ScreeningWass}, and because finiteness of $2$-Wasserstein distance to Lebesgue is a less interesting property in dimension $\d \geq 3$ (even the Poisson point process satisfies it), we restrict ourselves to $\d = 2$, but there is no obstruction to proceeding similarly in any dimension.

\subsection{A \texorpdfstring{$H^{-1}$-$W_2$}{H-1 - W2} inequality for measures with bounded densities}
\label{sec:H1W2}
\begin{lemma}
\label{lem:ReverseH1W2}
Let $\mm$ be a probability measure on $\La_1$ and let $\brho \geq 1$. Assume that $\mm$ has a density $\rm$ with respect to the Lebesgue measure, with $\|\rm\|_{\infty} \leq \brho < + \infty$. Then, there exists a vector field $v$ satisfying:
\begin{equation*}
- \div(v) = \left(\mm - \Leb\right) \text{ in } \La_1, \quad v \cdot \vec{n} = 0 \text{ on } \partial \La_1,
\end{equation*}
and such that:
\begin{equation}
\label{RevH1W2}
\int_{\La_1} |v|^2 \leq \brho \times \WWW_2^2\left(\mm, \Leb_{\La_1}\right).
\end{equation}
\end{lemma}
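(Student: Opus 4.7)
\textbf{Plan of proof for Lemma \ref{lem:ReverseH1W2}.}

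The plan is to use the Benamou-Brenier representation of $\WWW_2^2(\mm, \Leb_{\La_1})$ in the reverse direction from Lemma \ref{lem:W2Hm1}: instead of building a transport interpolation out of a given vector field, I will extract the desired vector field $v$ by time-averaging the momentum $\rm_t v_t$ of an (almost) optimal interpolation. Concretely, fix $\varepsilon > 0$ and pick an admissible pair $(\mm_t, v_t)_{t \in [0,1]}$ with $\mm_0 = \Leb_{\La_1}$, $\mm_1 = \mm$, whose action is within $\varepsilon$ of $\WWW_2^2(\mm, \Leb_{\La_1})$. Then define
\begin{equation*}
v(x) := \int_0^1 \rm_t(x) v_t(x)\, \dd t.
\end{equation*}

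The compatibility relation $-\div(v) = \mm - \Leb$ in $\La_1$ will follow by integrating the continuity equation \eqref{continuity} in time from $0$ to $1$, and the Neumann condition $v \cdot \vec{n} = 0$ on $\partial \La_1$ will follow from the corresponding Neumann condition imposed on $\rm_t v_t$ at every time $t$ in the definition of admissible pairs. These two points are essentially bookkeeping.

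The main step — and the one where the bounded-density assumption $\|\rm\|_\infty \leq \brho$ enters — is the energy estimate. By the uniform density bound \eqref{eq:uniform_dens_bound}, the interpolating densities satisfy $\rm_t \leq \max(1, \brho) = \brho$ for all $t$. Applying Cauchy-Schwarz pointwise in $x$ to $\int_0^1 \sqrt{\rm_t(x)} \cdot \sqrt{\rm_t(x)}\, v_t(x)\, \dd t$,
\begin{equation*}
|v(x)|^2 \leq \left(\int_0^1 \rm_t(x)\, \dd t\right) \left(\int_0^1 \rm_t(x) |v_t(x)|^2\, \dd t\right) \leq \brho \int_0^1 \rm_t(x) |v_t(x)|^2\, \dd t.
\end{equation*}
Integrating over $x \in \La_1$ and swapping orders of integration yields
\begin{equation*}
\int_{\La_1} |v|^2 \leq \brho \int_0^1 \int_{\La_1} |v_t|^2\, \dd \mm_t\, \dd t = \brho \cdot \Action\left((\mm_t, v_t)_t\right) \leq \brho \left(\WWW_2^2(\mm, \Leb_{\La_1}) + \varepsilon\right),
\end{equation*}
and letting $\varepsilon \to 0$ gives \eqref{RevH1W2}.

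The only real obstacle is a regularity/approximation issue: to make the integral $v = \int_0^1 \rm_t v_t\, \dd t$ well-defined and to apply Fubini cleanly, one may want to first work with a minimizing sequence of sufficiently regular admissible pairs (e.g.\ as in the proof of the Benamou-Brenier formula in \cite[Sec.~5.3]{Sa15}) and then pass to the limit. This is standard, and in fact the argument is essentially \cite[Prop.~3.1]{loeper2006uniqueness}; the statement is reproduced here in the form tailored to our normalization and to the subsequent screening-in-Wasserstein construction of Section \ref{sec:ScreenWass}.
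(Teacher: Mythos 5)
Your proposal is correct and follows essentially the same route as the paper's proof: both define $v := \int_0^1 \rm_t v_t\, \dd t$ from a (near-)optimal Benamou–Brenier pair, derive the divergence and Neumann conditions by integrating the continuity equation in time, and obtain the energy bound via Cauchy–Schwarz in $t$ combined with the uniform density bound \eqref{eq:uniform_dens_bound}. The only cosmetic differences are your use of an $\varepsilon$-optimal pair (the paper takes an optimal one directly) and a marginally sharper pointwise form of Cauchy–Schwarz; both are fine.
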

This is basically \cite[Prop. 3.1]{loeper2006uniqueness} and we sketch here the proof.
\begin{proof}
Let $(\mm_t, v_t)_{t \in [0,1]}$ be an optimal admissible pair in the Benamou-Brenier formula \eqref{eq:BenamouBrenier} interpolating between $\mm_0 := \Leb$ and $\mm_1 := \mm$, and define the vector field $v$ as $v := \int_{0}^1 \rm_t v_t \dd t$. Integrating the continuity equation \eqref{continuity} shows that $- \div(v) = \mm_1 - \mm_0 = \mm - \Leb$ in $\La_1$, and since for all $t$ the vector field $v_t$ satisfies a Neumann boundary condition, so does $v$. Moreover, we have by Cauchy-Schwarz:
\begin{equation*}
\int_{\La_1} |v|^2 \leq \int_{0}^1 \left(\int_{\La_1} \rm^2_t(x) |v_t|^2(x) \dd x \right) \dd t \leq \sup_{t \in [0,1]} \| \rm_t \|_{\infty} \WWW_2^2(\Leb_{\La_1}, \mm).
\end{equation*}
Using the uniform control \eqref{eq:uniform_dens_bound} on the density of $\rm_t$ concludes the proof.
\end{proof}

\begin{corollary}
\label{coro:pointprocessWassH1}
Let $n \geq 1$, let $\bX$ be a $n^\d$-point configuration in $\La_n$ and let $\eta \in (0,1)$. Assume that $\eta < \min_{x \in \bX} \dist(x, \partial \La_n)$ and that the following “density bound” holds:
\begin{equation}
\label{densitybX}
\sup_{x \in \La_n} \left|\bX \cap \B_1(x) \right| \leq \brho.
\end{equation}
Then there exists a screened electric field $\El$ compatible with $\bX$ in $\La_n$ such that: 
\begin{equation}
\label{eq:EnergElecW2}
\frac{1}{n^\d} \int_{\La_n} |\Eleta|^2 \leq C_\eta \brho \left(\frac{1}{n^\d} \WWW_2^2(\bX, \Leb_{\La_n}) + 1\right),
\end{equation}
with $C_\eta = C \|\chi\|_\infty \eta^{-\d}$ for some $C$ depending only on $\d$.
\end{corollary}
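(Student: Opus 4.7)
The plan is to apply Lemma~\ref{lem:ReverseH1W2} to the spread-out configuration $\bXeta$ on $\La_n$, obtaining a vector field which (up to the factor $\cd$) will play the role of $\Eleta$, and then invert the truncation formula~\eqref{def:Eleta} to recover an admissible $\El$. The density bound~\eqref{densitybX} is precisely what makes $\bXeta$ satisfy the hypotheses of the $H^{-1}$-$W_2$ inequality: since $\eta<1$, the support of each $\ceta(\cdot-x)$ lies in $\B_1(x)$, and at most $\brho$ points of $\bX$ can occur in any unit ball; each contributes at most $\|\chi\|_\infty\eta^{-\d}$, so the density of $\bXeta$ is uniformly bounded by $\brho\|\chi\|_\infty \eta^{-\d}$.

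Next, I would run the Benamou-Brenier argument from the proof of Lemma~\ref{lem:ReverseH1W2} directly on $\La_n$ (the argument only uses convexity of the domain, the continuity equation, and the uniform density bound~\eqref{eq:uniform_dens_bound} along the interpolation, none of which are specific to $\La_1$ or to probability measures). This produces a vector field $v$ with $v\cdot \vec{n}=0$ on $\partial \La_n$, satisfying $-\div v = \bXeta - \Leb_{\La_n}$ in $\La_n$, and
\begin{equation*}
\int_{\La_n} |v|^2 \leq \brho\|\chi\|_\infty \eta^{-\d}\cdot \WWW_2^2(\bXeta,\Leb_{\La_n}).
\end{equation*}
To replace $\bXeta$ by $\bX$ on the right-hand side, I would apply the triangle inequality together with the bound $\WWW_2^2(\bX,\bXeta)\leq n^\d\eta^2$, which follows from transporting each spread-out charge back to its center (cost at most $\eta^2$ per unit mass, total mass $n^\d$). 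This yields $\WWW_2^2(\bXeta,\Leb_{\La_n})\leq 2\WWW_2^2(\bX,\Leb_{\La_n})+2n^\d\eta^2$.

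Finally, I would set $\Eleta:=\cd v$, which is screened and compatible with $\bXeta$ in the sense of~\eqref{divEleta}, and define $\El:=\Eleta-\sum_{x\in\bX}\nabla\feta(x-\cdot)$. By~\eqref{def:feta}, this gives $-\div\El=\cd(\bX-\Leb)$ in $\La_n$, so $\El$ is compatible with $\bX$. Crucially, the hypothesis $\eta<\min_{x\in\bX}\dist(x,\partial\La_n)$ ensures that the corrective terms $\nabla\feta(x-\cdot)$ all vanish in a neighbourhood of $\partial\La_n$, so that the Neumann condition on $\Eleta$ transfers to $\El$ and the latter is indeed screened. Dividing the resulting $L^2$ bound on $|\Eleta|^2=\cd^2|v|^2$ by $n^\d$ and absorbing the term $\eta^2\leq 1$ into the additive constant delivers~\eqref{eq:EnergElecW2} with the stated $C_\eta$.

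The only non-routine point is verifying that the Neumann boundary condition survives the reconstruction $\El=\Eleta-\sum_x\nabla\feta(x-\cdot)$, and the assumption separating $\bX$ from $\partial\La_n$ by at least $\eta$ is tailored precisely for this; the rest is bookkeeping around the scaling (total mass $n^\d$ rather than $1$) and the triangle inequality.
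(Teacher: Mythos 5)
Your proof is correct and follows essentially the same route as the paper: spread out the charges, use the density bound \eqref{densitybX} to get $\|\dd\bXeta/\dd\Leb\|_\infty\leq C\|\chi\|_\infty\eta^{-\d}\brho$, invoke Lemma \ref{lem:ReverseH1W2} (the paper does this by rescaling to $\La_1$ rather than rerunning Benamou--Brenier on $\La_n$, a cosmetic difference), and compare $\WWW_2(\bXeta,\Leb_{\La_n})$ with $\WWW_2(\bX,\Leb_{\La_n})$ by the triangle inequality at cost $n^\d\eta^2$. Your explicit reconstruction of $\El$ from $\Eleta$ via \eqref{def:Eleta} and the check that the Neumann condition survives (using $\feta\equiv 0$ outside $\B_\eta$ and the separation hypothesis) is a point the paper leaves implicit, and you handle it correctly.
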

\begin{proof}[Proof of Corollary \ref{coro:pointprocessWassH1}]
Let $\bX_{\eta}$ be the “spread-out” version of $\bX$ at distance $\eta$ as in \eqref{def:deta}. The $2$-Wasserstein distance between the measures $\frac{1}{n^\d} \bX$ and $\frac{1}{n^\d} \bXeta$ is clearly bounded by $\eta$, thus by the triangular inequality we have:
\begin{equation}
\label{WW2bXetabX}
\WWW_2^2(\bXeta, \Leb_{\La_n}) \leq C\left( \WWW_2^2(\bX, \Leb_{\La_n}) + n^\d \right).
\end{equation}
Regularizing $\bX$ into $\bXeta$ produces a measure with bounded density with respect to the Lebesgue measure. Indeed, combining \eqref{densitybX} and the fact that spread-out Dirac masses $\deta_x$ have a bounded density, we obtain:
\begin{equation*}
\|\frac{\dd \bXeta}{\dd \Leb_{\La_n}}\|_{\infty} \leq C_\eta \brho.
\end{equation*}
We may now apply Lemma \ref{lem:ReverseH1W2} to $\frac{1}{n^\d} \widetilde{\bXeta}$, where $\widetilde{\bXeta}$ is the push-forward of $\bXeta$ by $x \mapsto n^{-1} x$. By construction, $\frac{1}{n^\d} \widetilde{\bXeta}$ is a probability measure on $\La_1$ with density bounded by $C_\eta \brho$ and such that:
\begin{equation*}
\WWW_2^2\left(\frac{1}{n^\d} \widetilde{\bXeta}, \Leb_{\La_1}\right) = \frac{1}{n^{\d+2}} \WWW_2^2\left({\bXeta}, \Leb_{\La_n}\right).
\end{equation*}
The vector field $v$ given by Lemma \ref{lem:ReverseH1W2} can then be turned back into a vector field $\Eleta$ compatible with $\bXeta$ by setting $\Eleta(x) := n v(n^{-1}x)$ and applying a simple scaling to \eqref{RevH1W2} yields:
\begin{equation*}
\int_{\La_n} |\Eleta|^2 \leq C_\eta \brho \WWW_2^2(\bXeta, \Leb_{\La_n}),
\end{equation*}
which combined with \eqref{WW2bXetabX} concludes the proof.
\end{proof}
Corollary \ref{coro:pointprocessWassH1} shows that for (finite) point configurations with a well-controlled density of points, a bound on the Wasserstein distance (to Lebesgue) can be turned into a bound on the energy of a compatible electric field. This opens the way to proving an implication of the type $\WD \implies \EU$, however our statement requires that the number of points and the volume of the box coincide, which is not true in general for a random sample of a large point configuration. We thus develop a “screening procedure” which should correct the number of points without changing the underlying configuration - or its distance to Lebesgue - too much. Note that the construction of \cite[Theorem 3.8]{erbar2023optimal} is somewhat similar in spirit, but it is not suited to our purposes: it ensures that two slightly modified point processes will have the same random number of points in large boxes, however, these random numbers are in general different from the Lebesgue mass of that box.

\subsection{“Screening” in Wasserstein space}
\label{sec:ScreenWass}
\newcommand{\tPi}{\widetilde{\Pi}}
\begin{lemma}[Wasserstein screening]
\label{lem:ScreeningWass}
We can choose $\eta_0$ small enough, and $K, C$ large enough such that the following holds. 
\newcommand\puteqnum{
  \refstepcounter{equation}\textup{(\theequation)}}

Let $\epsilon \in (0, \hal)$ and let $R \geq 10$. Assume that $|\La_R|$ is an integer. Let $\bX$ be a point configuration in $\La_R$, and let $\Pi$ be a coupling between $\bX$ and a measure $\mm_0$ on $\R^2$ with\footnote{\corT{Here we mean that $\mm_0(\Omega) \leq \Leb(\Omega)$ for all $\Omega$, which is stronger than mere absolute continuity.}} $\mm_0 \leq \Leb$, such that the mass of $\mm_0$ is equal to $|\bX|$. Assume that the points of $\bX$ satisfy a density bound of the form:
\begin{equation}
\label{eq:AssumpDensity}
\sup_{x \in \La_R} \left|\bX \cap \B_1(x)\right| \leq \brho.
\end{equation}
Assume that the following “screenability conditions” hold:
\begin{enumerate}
   \item The parameters $R, \epsilon$ and the bound \eqref{eq:AssumpDensity} are such that $\brho \leq \frac{\epsilon^2 R}{K}$    \hfill \puteqnum \label{eq:brhoKepsilon}
   \item The number of points of $\bX$ in $\La_R$ is smaller than $\frac{1}{K} \frac{\epsilon^7 R^4}{\brho}$  \hfill \puteqnum \label{WassCondNumberPoints}
   \item The quadratic cost of $\Pi$ is controlled by: 
   \begin{equation}
\label{screenWasscond}
\iint_{\La_R \times \R^2} |x-y|^2 \dd \Pi(x,y) \leq \frac{1}{K} \frac{\epsilon^7 R^4}{\brho} 
\end{equation}
   \item The restriction of $\mm_0$ to $\La_{R(1-\epsilon)}$ coincides with the Lebesgue measure on $\La_{R(1-\epsilon)}$.       \hfill \puteqnum \label{secondSC}
\end{enumerate}
Then there exists a point configuration $\tbX$ in $\La_R$ such that:
\begin{enumerate}
   \item $\tbX$ has exactly $|\La_R|$ points in $\La_R$.
   \item $\tbX$ and $\bX$ coincide on $\La_{R (1 - \epsilon)}$.
   \item All the points of $\tbX$ are at distance at least $\eta_0$ from $\partial \La_R$.
   \item The following density bound holds:
   \begin{equation}
   \label{eq:densitytbX}
\sup_{x \in \La} \left|\tbX \cap \B_1(x)\right| \leq C'' \brho.
   \end{equation}
   \item We control the (usual) $2$-Wasserstein distance between $\tbX$ and $\Leb_{\La_R}$ by:
   \begin{equation}
   \label{WassTbX}
   \WWW_2^2\left(\tbX, \Leb_{\La_R}\right) \leq C \left(1 + \epsilon \brho^{\corT{^2}} \right) \times \iint_{\La_R \times \R^2} |x-y|^2 \dd \Pi(x,y) + C'|\bX \cap \La_R| +  C' R^2 \left( \epsilon + \epsilon^2 \brho^2\right).
   \end{equation}
\end{enumerate}
\end{lemma}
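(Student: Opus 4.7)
The proof mirrors the Sandier-Serfaty screening of Lemma \ref{lem:screening}, with the quadratic transport cost replacing the electric energy and the Benamou-Brenier formula \eqref{eq:BenamouBrenier} providing a bridge between couplings and ``fields'' that can be manipulated like electric fields.

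\textbf{Step 1 (Benamou-Brenier and a small vector field).} I first spread $\bX$ to $\bX_{\eta_0}$ (via \eqref{def:deta}); by \eqref{eq:AssumpDensity}, this measure has density bounded by $C\brho$. The coupling $\Pi$ induces a coupling between $\bX_{\eta_0}$ and $\mm_0$ whose cost is at most $\iint_{\La_R \times \R^2} |x-y|^2 \dd \Pi + O(\eta_0^2 |\bX \cap \La_R|)$, both terms being small by \eqref{screenWasscond} and \eqref{WassCondNumberPoints}. Invoking Benamou-Brenier, I obtain an admissible pair $(\mm_t, v_t)_{t \in [0,1]}$ between $\mm_0$ and $\bX_{\eta_0}$, whose densities $\rm_t$ satisfy $\|\rm_t\|_\infty \leq C\brho$ by \eqref{eq:uniform_dens_bound}. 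Setting $v := \int_0^1 \rm_t v_t \dd t$, one has $-\div(v) = \mm_0 - \bX_{\eta_0}$ and, by Cauchy-Schwarz, $\int_{\La_R}|v|^2 \leq C\brho \cdot \WWW_2^2(\bX_{\eta_0}, \mm_0) \leq C \epsilon^7 R^4/K$.

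\textbf{Step 2 (Good level and construction of $\tbX$).} A coarea-plus-pigeonhole argument over the family of square boundaries $L_\alpha := \partial \La_{R(1-\alpha)}$ for $\alpha \in (0, \epsilon/2)$ yields some $\alpha^*$ with $\int_{L^*} |v|^2 \dd \sigma \leq \frac{C}{\epsilon R} \int_{\La_R}|v|^2$, where $L^* := L_{\alpha^*}$. A companion pigeon-hole over thin shells around the candidate level (using $|\bX \cap \La_R| \leq C \brho R^2$) additionally ensures that $\bX$ has no points within distance $\eta_0$ of $L^*$. Setting $r^* := R(1-\alpha^*) \in (R(1-\epsilon/2), R)$, I define
\[
\tbX := (\bX \cap \La_{r^*}) \cup \tbX_{\mathrm{new}},
\]
where $\tbX_{\mathrm{new}}$ is a regular lattice in the shrunken annulus $\La_{R-2\eta_0} \setminus \La_{r^*}$ of cardinality $N' := |\La_R| - |\bX \cap \La_{r^*}|$, ensuring $|\tbX \cap \La_R| = |\La_R|$. (If $N' < 0$, I would instead trim points from $\bX \cap (\La_{r^*} \setminus \La_{R(1-\epsilon)})$; \eqref{screenWasscond} controls how often this happens.) The first four conclusions of the lemma follow directly from this construction and the density of the inserted lattice.

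\textbf{Step 3 (Coupling and estimate \eqref{WassTbX}).} I assemble a coupling between $\tbX$ and $\Leb_{\La_R}$ out of three pieces: (a) a transport from $\bX \cap \La_{r^*}$ to $\Leb_{\La_{r^*}}$, obtained by localizing the Benamou-Brenier curve to $\La_{r^*}$ and flattening the normal component of $v_t$ in an $\eta_0$-neighborhood of $L^*$; (b) a flat transport from $\tbX_{\mathrm{new}}$ to $\Leb_{\La_R \setminus \La_{r^*}}$, cheap because the lattice has unit spacing and the annulus has thickness $\leq \epsilon R/2$; (c) a local corrector on $L^*$ reconciling the mass flux between the two regions, estimated via a local Wasserstein-$H^{-1}$ inequality in the spirit of Lemma \ref{lem:ReverseH1W2}. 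The hard part is piece (c): a careful accounting of the flux of $v$ through $L^*$ (which by the divergence theorem matches the discrepancy $|\La_{r^*}| - |\bX \cap \La_{r^*}|$) and of its transport cost, mirroring the gluing of Neumann electric fields in Lemma \ref{lem:screening}. Summing the three pieces, the factor $(1 + \epsilon \brho)$ in \eqref{WassTbX} arises from absorbing the $L^\infty$-density bound $C\brho$ through the boundary layer of relative thickness $\epsilon$; the $C|\bX \cap \La_R|$ term is the cost of the $\eta_0$-spreading; and the $CR^2(\epsilon + \epsilon^2 \brho^2)$ term comes from the intrinsic cost of the annular lattice-to-Lebesgue transport and from the density correction.
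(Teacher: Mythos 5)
Your skeleton is the paper's (regularize to $\bX_{\eta_0}$, take a Benamou--Brenier curve with densities bounded by $C\brho$, select a good boundary $\partial\La_{r^*}$ by a mean-value argument, rebuild the configuration in the outer annulus, glue), but the point where you actually build $\tbX$ in the annulus is where the whole difficulty sits, and your choice there does not work. You insert a \emph{uniform} lattice with the correct total cardinality and defer mass balance to a ``local corrector on $L^*$''. The trouble is that the flux of $v=\int_0^1 \mathrm{m}_t v_t\,\dd t$ through $\partial\La_{r^*}$ is controlled only in $L^2$ along the boundary (by $C\brho\,\WWW_2^2(\bX_{\eta_0},\mm_0)/(\epsilon R)$), so the discrepancy $D:=|\bX\cap\La_{r^*}|-|\La_{r^*}|$ may enter through a tiny portion of $\partial\La_{r^*}$, while a uniform lattice spreads the compensating density shift $D/(\epsilon R^2)$ over the entire annulus. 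Reconciling a localized surplus of mass $D$ with a uniformly spread deficit of mass $D$ forces transport over distances of order $R$, hence a quadratic cost $\gtrsim D R^2$; under \eqref{screenWasscond} one can have $D$ as large as $\epsilon^{7/2}R^2/(K\brho)^{1/2}$ (a pile of $D$ excess points on top of a perfect configuration costs only about $D^2$), and then $DR^2$ exceeds every term on the right-hand side of \eqref{WassTbX} by a negative power of $\epsilon$. This is precisely why the paper tiles $\La_R\setminus\La_{r^*}$ by mesoscopic rectangles $H_i$ of side $\ell=\epsilon^2R$, assigns each the corrected density $m_i=1+\frac{1}{|H_i|}\bigl(\int_{\partial\La_{r^*}\cap\partial H_i}v\cdot\vec n-n_i\bigr)$, and realizes that density by one point per sub-rectangle of area $1/m_i$: the flux is then absorbed within distance $\ell$, and this local modulation is exactly what produces the admissible $\epsilon\brho$ in the factor $(1+\epsilon\brho)$. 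Your ``hard part is piece (c)'' remark is accurate, but with a uniform lattice piece (c) cannot be bounded as claimed; the local density modulation is the missing idea, not a detail.

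Two further steps fail as written. First, you cannot choose $L^*$ so that $\bX$ has no points within $\eta_0$ of it: the $\O(\epsilon R/\eta_0)$ disjoint candidate shells can all be nonempty as soon as $|\bX\cap\La_R|\gtrsim\epsilon R/\eta_0$, which \eqref{WassCondNumberPoints} and the density bound permit for every large $R$; the paper instead keeps the spread-out charges straddling $\partial\La_{r^*}$ and ``completes'' them on the outside (the quantities $n_\alpha$ and the associated fields), which you would need to replicate. Second, ``flattening the normal component of $v_t$'' near $L^*$ destroys the continuity equation in $\La_{r^*}$: with the flux blocked, the interior curve no longer terminates at $\bX_{\eta_0}\cap\La_{r^*}$ at time $1$. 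The correct gluing matches the normal component of the exterior velocity with that of $\mathrm{m}_tv_t$ on $\partial\La_{r^*}$ \emph{for each time $t$}, which is why the paper must build a genuinely time-dependent exterior interpolation (the measures $\mm^{\mathrm{ext}}_t$ together with the time-dependent flux-correction fields) rather than a single static corrector.
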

\begin{remark}
It is tempting (e.g. in view of what holds for the “classical” screening, see Remark \ref{remark_constant_screening1}) to ask whether the first constant $C$ in the right-hand side of \eqref{WassTbX} could be taken equal to $1$, which would be of potential interest. It seems very plausible, as the main losses on $C$ are due to two opposite operations: at the beginning of the argument we “spread out the charges”, passing from $\bX$ to $\bX_\eta$, and at the end we “shrink” them back, reverting to a purely atomic point configuration. The Wasserstein costs of doing so are evaluated through two separate triangular inequalities, when in fact they should cancel each other (to first order, at least). However, due to some intermediate steps in our argument, obtaining $C =1$ is a bit tricky, and we do not pursue that goal here. 
\end{remark}

\begin{proof}[Proof of Lemma \ref{lem:ScreeningWass}]
We follow the strategy introduced by Sandier-Serfaty for their screening of electric fields in the context of Coulomb gases, and transpose it into the $2$-Wasserstein world using the following analogy:
\newcommand{\bv}{\mathrm{v}}
\newcommand{\lra}{\longleftrightarrow}
\begin{center}
\begin{tabular}{ c c c }
Coulomb &  & Wasserstein  \\
   Electric field $\El$ & $\lra$ & Total flux $\int_{0}^1 \mu_t \times \bv_t \dd t$ \\
   Electric energy $\int |\El|^2$ & $\lra$ &  Total cost $\int_{0}^1 \int \mu_t \times |\bv_t|^2 \dd t$\\
   Compatibility $- \div(\El) = \cd \left( \bX - \Leb\right)$ & $\lra$ & Continuity equation $-\div(\mu_t \bv_t) = \frac{\dd}{\dd t} \mu_t$ \\
 \end{tabular}
 \end{center}
 One may quickly notice two non-trivial differences: in the Coulomb world, the energy is the $L^2$-norm (squared) of the electric field, whereas on the Wasserstein side we rather have:
 \begin{equation*}
\text{(Cost) }  \int_{0}^1 \int \mu_t |\bv_t|^2 \dd t\text{ vs. }  \int \left(\int_{0}^1 \mu_t\bv_t \dd t \right)^2  \leq \int_{0}^1 \int |\mu_t|^2 |\bv_t|^2 \dd t  \text{ ($L^2$-norm squared of the total flux)},
 \end{equation*}
the integrands differing by one power of the density (hence the need to have density bounds to control the cost in terms of the total flux). Moreover, screening in the Coulomb world only involves finding \emph{one} compatible electric field, whereas the Wasserstein picture, with the Benamou-Brenier point of view, requires to construct a \emph{family of interpolating measures} and velocity fields. Most of the technical ingredients can nevertheless be borrowed from Coulomb screening as in \cite[Prop. 6.1]{petrache2017next}.

\paragraph{\corT{Summary of the screening construction.}}
In short: “screening” consists in making ad hoc constructions near the boundary of $\La_R$ to pass from the existing data to the desired one. Here we start with the data of $\mm_0, \bX$, \corT{to which we associate an optimal pair} \corT{(family of measures, family of velocity fields)} interpolating between $\mm_0$ and $\bX$ (or more precisely a regularized version of $\bX$) in the sense of Benamou-Brenier, \corT{see Section \ref{sec:DefWass}}. The coupling $\Pi$ given in the assumption \corT{of Lemma \ref{lem:ScreeningWass}} yields an upper bound on the Wasserstein distance, and thus on the action of that pair. 

We use this data to produce a \emph{new} pair \corT{of measures and fields}, interpolating this time between $\Leb_{\La_R}$, \corT{instead of $\mm_0$}, and a regularized version of a new point configuration $\tbX$, \corT{instead of $\bX$}. 

The new pair coincides with the old one within a large sub-square $\La_r \subset \La_R$, but on $\La_R \setminus \La_r$ we have constructed \emph{by hand} \corT{new families of configurations and velocity fields}. 

This construction is done by solving some elliptic equations and defining somewhat abstract fields. The “action” of the pair \corT{(which eventually controls the Wasserstein distance)} is evaluated thanks to “energy estimates” found in the Coulomb screening lemmas.

\corT{\begin{itemize}
   \item In Step 0, we fix a Benamou-Brenier pair of interpolating measures between $\mm_0$ and $\bX$ (in fact, a “spread-out” version of $\bX$, as it is more convenient to work with measures that are non singular). This simply uses some definitions.
   \item In Step 1, we choose a “good boundary”, which means that we find a value or $r$ close to $R$ such that the “boundary” $\partial \La_r$ is “good” - meaning mostly that the velocity fields have good properties along $\partial \La_r$. This boundary will separate between an “old” region $\La_r$ where we will not touch the measures or the fields, and a “new” region $\La_R \setminus \La_r$ where we will perform ad hoc constructions.
   \item In Step 2, we define the “goal measure”, namely the point configuration that will replace $\bX$. This only requires to work on $\La_R \setminus \La_r$, as we do not change anything withing $\La_r$. This is quite tedious: we split the region $\La_R \setminus \La_r$ into mesoscopic rectangles $H_i$ (“first subdivision”) then each $H_i$ is cut further into rectangles $\mathcal{R}_\alpha$ of size $\approx 1$ (“second subdivision”), and we place one point in each $\mathcal{R}_\alpha$. The areas of those rectangles are chosen carefully so that the total number of points when defining the “goal measure” is the desired one.
   \item In Step 3, we continue this ad-hoc construction by defining a “goal field”, i.e. a properly chosen electric field compatible with our goal measure. This requires again to work within each small rectangle, and to solve elliptic equations of the form $- \div \mathrm{E} = ...$ (expressing various “compatibilities” of the fields) with relevant boundary conditions. This is identical to constructions done for screening in the Coulomb world.
   \item In Step 4, we do something new: we extend the “goal measure / field” into a whole interpolating family of measures / field. While doing that, we control the “action” of this family, which boils down to estimating the $L^2$-norms of the auxiliary fields that we have constructed by hand. Thankfully, this can be done by using estimates designed for the Coulomb case.
\end{itemize}
}

\paragraph{Step 0. Regularizing the configuration and choosing an admissible pair}
\newcommand{\bXetta}{\bX_{\eta_0}}
For $\eta_0$ small enough (we choose $\eta_0 = \frac{1}{100}$ below) we let $\bXetta$ be the regularized point configuration as in \eqref{def:deta}. Of course, there is a natural coupling between $\bX$ and $\bXetta$ with quadratic cost bounded by $|\bX \cap \La_R| \times \eta_0^2$, so by the triangular inequality we have:
\begin{equation}\label{trianginegXetta}
\WWW_2^2(\bXetta, \mm_0) \preceq \WWW_2^2(\bX, \mm_0) + |\bX \cap \La_R| \times \eta_0^2  \preceq \iint_{\La_R \times \R^2} |x-y|^2 \dd \Pi(x,y) + |\bX \cap \La_R| \times \eta_0^2,
\end{equation}
the last inequality following from the fact that $\Pi$ is \emph{a} coupling between $\bX$ and $\mm_0$, which provides an upper bound on the Wasserstein distance. The density assumption \eqref{eq:AssumpDensity} on the number of points of $\bX$ translates into a bound for the density of the measure $\bXetta$ with respect to the Lebesgue measure, namely:
\begin{equation}
\label{densitybXetta}
\|\frac{\dd \bXetta}{\dd \Leb}\|_{\infty} \leq C \brho \eta_0^{-2} \leq C' \brho.
\end{equation} 
We now fix an optimal family $(\mm_t, \bv_t)_{t \in [0,1]}$, given by the Benamou-Brenier\footnote{The Wasserstein distance is a distance, hence, symmetric. We always write the point configuration (or its spread out version) first and then the reference measure. However, we will often use the Benamou-Brenier formulation in “the other direction” i.e.\ with an interpolation between the reference measure (at time $0$) and the point configuration (at time $1$).}  formula \eqref{eq:BenamouBrenier}, such that:
\begin{enumerate}
   \item The initial measure $\mm_0$ is as in the assumption (in particular $\mm_0$ coincides with the Lebesgue measure on $\La_{R(1-\epsilon)}$), and $\mm_1 = \bXetta$.
   \item The associated cost satisfies:
   \begin{equation}
   \label{costmmt}
   \int_{0}^1  \left(\int_{\R^2} |\bv_t|^2 \dd \mm_t\right) \dd t \leq \WWW_2^2(\bXetta, \mm_0).
   \end{equation}
   \item For all $t \in [0,1]$, $\mm_t$ has a bounded density $\rm_t$ with respect to the Lebesgue measure on $\R^2$, with:
   \begin{equation}
   \label{unifmt}
\sup_{t \in [0,1]} \|\rm_t\|_{\infty} \leq C \brho,
   \end{equation}
   which is possible in view of \eqref{densitybXetta} and \eqref{eq:uniform_dens_bound}. 
\end{enumerate}

\paragraph{Step 1. Finding a good boundary.}
\newcommand{\minteta}{\mm^{\mathrm{int}}_{\eta_0}}
\newcommand{\mint}{\mm^{\mathrm{int}}}
By a mean value argument, we find $r \in [R - \epsilon R, R- \hal \epsilon R]$ such that:
\begin{equation}
\label{eq:meanValue}
\int_{0}^1 \left( \int_{\partial \La_r}   |\bv_t|^2 \rm_t \right) \dd t \preceq  \frac{1}{\epsilon R} \int_{0}^1  \left(\int_{\R^2} |\bv_t|^2 \rm_t\right) \dd t \preceq \frac{1}{\epsilon R} \WWW_2^2(\bXetta, \mm_0),
\end{equation}
using \eqref{costmmt}.

We now define $\mint$ as the restriction to $\La_r$ of the measure $\bXetta$, which will not be modified. An important technical remark is that since we have spread out the Dirac masses, some might intersect $\partial \La_r$ and we will need to take care of them in the construction below.

For $t \in [0,1]$ we define the “integrated flux” at time $t$ as the following vector field:
\begin{equation}
\label{def:Phit}
\Phi_t := \int_{0}^t  \rm_s \bv_s \dd s,
\end{equation}
and we will be particularly interested in the normal component of $\Phi_t$ along $\partial \La_r$. Let us note already that:
\begin{itemize}
   \item (Compatibility.) We have $-\div \Phi_1 = \mm_1 - \mm_0$ in $\La_r$.
 \item (Flux along $\partial_{\La_r}$.) We have, using an integration by parts and the continuity equation:
   \begin{multline}
\label{flux_initial}
\int_{\partial \La_r} \Phi_1 \cdot \vec{n} = \int_{\partial \La_r} \left(\Phi_1 - \Phi_0 \right) \cdot \vec{n} = \int_{0}^1 \int_{\partial \La_r} \frac{\dd}{\dd t} \Phi_t \cdot \vec{n} = \int_{0}^1 \int_{\partial \La_r}  \rm_t \bv_t \cdot \vec{n}\\ = \int_{0}^1 \int_{\La_r}  -\div\left(\rm_t \bv_t\right) = \int_{0}^1 \int_{\La_r} \frac{\dd}{\dd t} \rm_t = \mm_1(\La_r) - \mm_0(\La_r) = |\bXetta \cap \La_r| - |\La_r|.
\end{multline}
   \item (Energy along $\partial_{\La_r}$.) We have, using \eqref{unifmt}:
   \begin{multline}
   \label{EnergyPhi1}
\int_{\partial \La_r} |\Phi_1|^2  = \int_{\partial \La_r} \left| \int_{0}^1  \rm_s \bv_s \dd s \right|^2   \leq \int_{\partial \La_r} \int_{0}^1 \rm_s^2 |\bv_s|^2 \dd s \leq C \brho\corT{^2} \int_{0}^1 \left( \int_{\partial \La_r}   |\bv_s|^2 \rm_s \right) \dd s \\
\leq \frac{C \brho\corT{^2}}{\epsilon R} \WWW_2^2(\bXetta, \mm_0).
   \end{multline}
\end{itemize}

\paragraph{Step 2. Defining the “goal” measure}
\newcommand{\detaz}{\delta^{(\eta_0)}}
We now choose $\ell := \epsilon^2 R$, so that $1 \ll \ell \ll \epsilon R$.

\subparagraph{A first subdivision.}
We begin by splitting $\La_R \setminus \La_r$ into a finite family $\{H_i\}_{i \in I}$ of rectangles with sidelengths in $[\frac{\ell}{2}, 2 \ell]$ such that $m_i |H_i| \in \mathbb{N}$ for all $i$, with the “corrected density at time 1” $m_i$ defined as:
\begin{equation}
\label{eq:defmi}
m_i := 1 + \frac{1}{|H_i|} \left( \int_{\partial \La_r \cap \partial H_i} \Phi_1 \cdot \vec{n} - n_i \right), \quad \text{ where }n_i := \sum_{x \in \bX \cap \La_r} \int_{H_i} \detaz_x.
\end{equation}
The fact that we can split $\La_R \setminus \La_r$ into a finite family $\{H_i\}_{i \in I}$ of rectangles with sidelength in $[\frac{\ell}{2}, 2 \ell]$ is fairly elementary. The fact that we can moreover guarantee $m_i |H_i| \in \mathbb{N}$ comes from a perturbative argument: we justify below that $m_i$ is of order $1$, hence by “modifying the boundaries of the $H_i$ a little bit”, the quantity $m_i |H_i|$ quickly reaches an integer. This is a standard argument in Coulomb screening (see e.g.\ \cite[Proof of Prop. 6.1., Step 2.]{petrache2017next})

\subparagraph{Control on $|m_i - 1|$.}
\begin{claim}
\label{claim:sizemi}
Up to choosing $K$ large enough in the screenability condition \eqref{screenWasscond}, we have $|m_i -1| \leq \frac{1}{100}$, in fact we can ensure that $|m_i -1| \times \detaz_0 \leq \frac{1}{100}$, with $\detaz_0$ as in \eqref{def:deta}.
\end{claim}
\begin{proof}[Proof of Claim: \ref{claim:sizemi}]
First observe that $|\detaz_0|\leq \frac{1}{\eta_0^2}\|\chi\|_{\infty}.$
 On the one hand, using Cauchy-Schwarz's inequality we find:
 \begin{equation*}
\frac{1}{|H_i|} \left|\int_{\partial \La_r \cap \partial H_i} \Phi_1 \cdot \vec{n}\right|   \leq \frac{1}{\ell^2} \left(\int_{\partial \La_r \cap \partial H_i} 1 \right)^\hal \left(\int_{\partial \La_r} |\Phi_1|^2 \right)^\hal \leq \frac{1}{\ell^2} \ell^{\hal} \left(\int_{\partial \La_r} \int_{0}^1 |\bv_t|^2 \rm^2_t \right)^\hal,
\end{equation*}
using the rough bound $\int_{\partial H_i \cap \partial \La_r } |\Phi_1|^2  \leq \int_{\partial \La_r} |\Phi_1|^2$ and the definition of $\Phi_1$. Next, using the uniform bound \eqref{unifmt} on the density $\rm_t$ and \eqref{eq:meanValue}, we obtain:
\begin{equation*}
\left(\int_{\partial \La_r} \int_{0}^1 |\bv_t|^2 \rm^2_t \right)^\hal \leq \sup_{t \in [0,1]} \|\rm_t\|^\hal_{L^{\infty}} \times \left(\int_{\partial \La_r} \int_{0}^1 |\bv_t|^2 \rm_t \right)^\hal \leq C \brho^\hal \times \left(\frac{1}{\epsilon R} \WWW_2^2(\bXetta, \mm_0) \right)^\hal,
\end{equation*}
and thus, using \eqref{trianginegXetta}, we find:
\begin{equation*}
\frac{1}{|H_i|} \left| \int_{\partial \La_r \cap \partial H_i} \Phi_1 \cdot \vec{n} \right| \leq \frac{C}{\ell^{\frac{3}{2}}} \brho^\hal \times \frac{1}{\epsilon^\hal R^\hal}  \times \left( \iint_{\La_R \times \R^2} |x-y|^2 \dd \Pi(x,y) + |\bX \cap \La_R| \times \eta_0^2 \right)^\hal.
\end{equation*}
Since $\ell = \epsilon^2 R$, we see that the right-hand side can be made arbitrarily small by choosing the constant $K$ in the screenability conditions \eqref{WassCondNumberPoints} and \eqref{screenWasscond} large enough.

Controlling $n_i$ (as in \eqref{eq:defmi}) is easier: by definition $n_i$ is bounded by the number of points in a $1$-neighborhood of the boundary $\partial H_i$ of a rectangle whose sidelenghts are of order $\ell$, and thus $n_i \leq C \brho \ell$. In particular we have:
\begin{equation*}
\frac{n_i}{|H_i|} \leq C \frac{\brho \ell}{\ell^2} = \frac{C \brho}{\epsilon^2 R},
\end{equation*}
which can again be made arbitrarily small by choosing the constant $K$ in the screenability condition \eqref{eq:brhoKepsilon} large enough.
\end{proof}

\begin{remark} 
\label{remark:1stlayer}
It is helpful to realize that since $\ell \ll \epsilon R$, our subdivision of $\La_R \setminus \La_r$ into rectangles $\{H_i\}_{i \in I}$ consists in many ($\approx \frac{1}{\epsilon}$) concentric “layers” of width $\ell$. Among those layers, only the first one is actually concerned by the definition of $n_i, m_i$ as in \eqref{eq:defmi}, indeed for all rectangles $H_i$ besides those in this first layer, we have $n_i = 0$ and $m_i =1$ (since there is no sidelength of $H_i$ that touches $\partial \La_r$).
\end{remark}

\subparagraph{A second subdivision.}
Next, we subdivide each rectangle $H_i$ into a family of smaller rectangles $\{\Ralpha\}_{\alpha \in I_i}$ such that: 
\begin{equation}
\label{eq:propRalpha}
\text{ for all $i \in I$, for all $\alpha \in I_i$, } |\Ralpha| = \frac{1}{m_i}, \text{ the sidelengths of $\Ralpha$ are between $\frac{1}{3}$ and $3$.}
\end{equation}
This is again a standard construction, see e.g.\ \cite[Lemma 6.3]{petrache2017next}. Since we ensure that $m_i$ is very close to $1$, we can guarantee that the sidelengths will also be close to $1$ (between $\frac{1}{3}$ and $3$ is a conservative estimate, but the actual values do not matter anyway).

For each such rectangle, we keep track of the spread-out charges in $\La_r$ that might intersect $\partial \La_r$ near $\Ralpha$ and write:
\begin{equation}
\label{nalpha}
n_\alpha := \sum_{x \in \bX \cap \La_r} \int_{\Ralpha} \detaz_x.
\end{equation}
\begin{remark}
\label{rem:remark1stlayer2}
As in Remark \ref{remark:1stlayer}, it is here helpful to keep in mind that if $H_i$ is not in the “first layer” of rectangles neighboring $\partial \La_r$, then $m_i = 1$ and $n_\alpha = 0$ for all $\alpha \in I_i$. In fact, among each rectangle $H_i$ in the “first layer”, only the $\O(\ell)$ rectangles $\Ralpha$ which belong to “the first layer of the first layer”, i.e.\ whose sides intersect $\partial \La_r$, are such that $n_\alpha$ may be $\neq 0$.
\end{remark}

\subparagraph{Defining the goal measure.}
\newcommand{\muexteta}{\mm^{\mathrm{ext}}_{\eta_0}}
\newcommand{\tmueta}{\tbX_{\eta_0}}
\newcommand{\tmm}{\widetilde{\mm}}
\renewcommand{\tmu}{\tmm}
\newcommand{\tbv}{\widetilde{\bv}}

We define the “goal measure on the exterior” $\muext$ as being obtained by:
\begin{enumerate}
      \item Placing a spread-out Dirac mass at the center $p_\alpha$ of each rectangle $\Ralpha$ (for each $\alpha \in I_i$, $i \in I$).
   \item Completing (in some of the rectangles $\Ralpha$ which touch $\partial \La_r$) the (spread-out) charges that are in $\La_r$ and which may intersect the boundary $\partial \La_r$. Those should indeed not be thrown away when re-defining the measure outside $\La_r$.
\end{enumerate}
We choose $\eta_0 := \frac{1}{100}$, so that for each $i \in I$ and each $\alpha \in I_i$, we have $\dist(p_\alpha, \partial \Ralpha) \geq \frac{1}{3} \geq 10 \eta_0$. We then let $\muext$ be as follows:
\begin{equation}
\label{def:muext}
\muext := \sum_{i \in I} \sum_{\alpha \in I_i} \delta^{(\eta_0)}_{p_\alpha} + \sum_{i \in I} \sum_{\alpha \in I_i} n_{\alpha},
\end{equation}
keeping in mind (see Remark \ref{rem:remark1stlayer2}) that by construction $n_\alpha = 0$ besides the “first layer of first layer” rectangles $\Ralpha$ outside $\La_r$.

We also define the “total goal measure” $\tmueta$ as 
\begin{equation}
\label{eq:deftotal}
\tmueta := \mint + \muext.
\end{equation} 
Note that by construction (and in particular because we made sure to complete the spread-out charges across $\partial \La_r$), the measure $\tmueta$ is the regularization at scale $\eta_0$ of some underlying point configuration $\tbX$, which justifies our notation. 

\subparagraph{First properties of the total goal measure.}
We may already check a certain number of things.
\begin{enumerate}
   \item By construction, the mass of $\muext$ is given (using \eqref{flux_initial} to compute $\int_{\partial \La_r} \Phi_1 \cdot \vec{n}$, and \eqref{eq:defmi}) by:
\begin{multline*}
\label{massmuext}
|\muext| = \sum_{i \in I} m_i |H_i| + \sum_{i \in I} \sum_{\alpha \in I_i} n_{\alpha} = \sum_{i \in I} |H_i| + \int_{\partial \La_r} \Phi_1 \cdot \vec{n} - \sum_{i \in I} n_{i} + \sum_{i \in I} \sum_{\alpha \in I_i} n_{\alpha} \\
= \sum_{i \in I} |H_i| + \int_{\partial \La_r} \Phi_1 \cdot \vec{n} - \sum_{i \in I} n_{i} + \sum_{i \in I} n_{i}  = |\La_R \setminus \La_r| - \left(|\bXint \cap \La_r| - |\La_r|\right), 
\end{multline*}
and thus the total mass of $\tmueta$ is equal to:
\begin{equation*}
|\tmueta| = |\mint| + |\muext| = |\bXint \cap \La_r| + \left(|\La_R| - |\La_r| - |\bXint \cap \La_r| + |\La_r|\right) = |\La_R|,
\end{equation*}
which means that the underlying point configuration $\tbX$ has exactly $|\La_R|$ points and satisfies the first item of Lemma \ref{lem:ScreeningWass}.
\item Since we have chosen our “good boundary” $\La_r$ with $r \geq R - \epsilon R$, and have not touched the configuration inside $\La_r$, the second item of Lemma \ref{lem:ScreeningWass} is satisfied.
\item By construction, the points $p_\alpha$ placed in each $\Ralpha$ are all at distance at least $\frac{1}{3}$ from the boundary, and the other points are much further away, so the third item is satisfied. 
\item The density of points within $\La_r$ has not been changed, and the density of points outside $\La_r$ is governed by $m_i$, which is close to $1$. Hence the density bound \eqref{eq:densitytbX} holds.
\end{enumerate}
The rest of the proof is devoted to checking that $\tbX$ satisfies the last item in our statement, namely the $2$-Wasserstein bound \eqref{WassTbX}. To do so, we will produce a curve $(\tmm_t, \tbv_t)_{t \in [0,1]}$ satisfying the continuity equation, with a continuous family of measures  $(\tmm_t)_{t \in [0,1]}$ on $\La_R$ such that: $\tmm_0 = \Leb_{\La_R}$, $\tmm_1 = \tmueta$ and such that the associated quadratic cost is controlled. This will give us a bound on the $2$-Wasserstein distance between $\Leb_{\La_R}$ and $\tmueta$ - which can easily be converted into a a bound on the distance between $\Leb_{\La_R}$ and $\tbX$ by the triangular inequality.

\paragraph{Step 3. Defining the goal field}
We now define a vector field (which will eventually correspond to the value at time $1$ of the velocity field $\tbv$), meant to be “compatible” with $\tmueta$. We follow the plan used for Coulomb screening and will import the relevant estimates to control the “energy” of the field. We start by constructing an “exterior” field $\Eext$ corresponding to the “exterior” measure $\muext$ on $\La_R \setminus \La_r$, and which is defined as a sum of various fields related to the rectangles $H_i$ and $\Ralpha$ in our subdivisions.
\begin{enumerate}
   \item Let us start with the rectangles $H_i$ which are \emph{not} in the “first layer”, i.e.\ whose sides do not intersect $\partial \La_r$. These are easy to treat: since for them the “corrected” density $m_i$ is equal to $1$ there is no need to “correct the flux” (see below) and we simply define, for each $\alpha \in I_i$, a field $\El^{(\alpha, p)}$ corresponding to {p}lacing a charge, namely we solve:
   \begin{equation}
   \label{placingcharge}
   -\div(\El^{(\alpha, p)}) = \left(\delta^{(\eta_0)}_{p_\alpha} - 1\right) \text{ in } \Ralpha, \quad \El^{(\alpha, p)} \cdot \vec{n} = 0 \text{ on } \partial \Ralpha,
   \end{equation}
   which is possible because $|\Ralpha| = \frac{1}{m_i} = 1$ by construction.
A simple estimate for “placing a charge without flux creation” (see \cite[Lemma 6.5]{petrache2017next}) gives us:
\begin{equation}
\label{energyplace}
\int_{\Ralpha} \left| \El^{(\alpha, p)} \right|^2 \leq C,
\end{equation}
with a constant $C$ depending only on our choice of $\eta_0$.
   \item Let us now focus on rectangles $H_i$ which \emph{are} in the “first layer”.
   \begin{enumerate}
      \item Our first task will be to “{c}omplete charges” i.e.\ making sure that the parts of  spread-out charges coming from $\La_r$ and intersecting $\partial \La_r$ along $\partial H_i$ are not forgotten. For each $\alpha \in I_i$, we solve:
      \begin{equation}
      \label{eq:complete}
-\div(\El^{(\alpha, c)}) = n_\alpha  \text{ in } \Ralpha, \quad \El^{(\alpha, c)} \cdot \vec{n} = \frac{-n_\alpha}{\int_{\partial \Ralpha \cap \partial \La_r}1} \text{ on } \partial \Ralpha
      \end{equation}
  with $n_\alpha$ as in \eqref{nalpha} (note that we choose the boundary condition appropriately to make this solvable). The energy estimate for “completing a charge near the boundary” is found in \cite[Lemma 6.6]{petrache2017next} and gives:
  \begin{equation*}
\int_{\Ralpha} \left| \El^{(\alpha, c)} \right|^2 \leq C n_\alpha^2 \leq C \brho^2,
  \end{equation*} 
since $n_\alpha$ is controlled by the density bound for the points inside $\La_r$.
   \item The second task is to solve \eqref{placingcharge} again but with the correct density ($m_i$ instead of $1$), namely:
      \begin{equation}
   \label{placingcharge2}
   -\div(\El^{(\alpha, p)}) = \left(\delta^{(\eta_0)}_{p_\alpha} - m_i\right) \text{ in } \Ralpha, \quad \El^{(\alpha, p)} \cdot \vec{n} = 0 \text{ on } \partial \Ralpha,
   \end{equation}
   which is possible because $|\Ralpha| = \frac{1}{m_i}$. The energy estimate \eqref{energyplace} still holds because $m_i$ is close to~$1$ (see again \cite[Lemma 6.5]{petrache2017next}).
\item Finally, we need to “correct the {f}lux” on $H_i$, which means here solving:
\begin{equation}
\label{eq:Elif}
-\div (\El^{(i, f)}) = \left(m_i - 1 \right) \text{ in } H_i, \quad \El^{(i, f)} \cdot \vec{n} =  \begin{cases} \Phi_1 \cdot \vec{n} + \sum_{\alpha \in I_i} \frac{n_\alpha}{\int_{\partial \Ralpha \cap \partial \La_r}1} \1_{\partial \Ralpha} & \text{ on } \partial H_i \cap \partial \La_r, \\
 \El^{(i, f)} \cdot \vec{n} =  0 & \text{ on } \partial H_i \setminus \partial \La_r \end{cases}.
\end{equation}
Note that this is compatible with our definition \eqref{eq:defmi} of $m_i$. The energy of $ \El^{(i, f)}$ can be estimated using \cite[Lemma 6.4]{petrache2017next}, we obtain:
\begin{equation}
\label{eq:energyElif}
\int_{H_i} \left| \El^{(i, f)} \right|^2 \leq C \ell \left(\int_{\partial H_i \cap \partial \La_r} |\Phi_1|^2 + \sum_{\alpha \in I_i} n^2_\alpha \right)  \leq C \ell \int_{\partial H_i \cap \partial \La_r} |\Phi_1|^2 + C \ell^2 \brho^2, 
\end{equation}
using again our density assumption to bound each $n_\alpha$ by $C\brho$, and the fact that there are $\O(\ell)$ indices $\alpha$ in $I_i$ such $n_\alpha \neq 0$.
   \end{enumerate}
   \item We then define the “exterior” field $\Eext$ as:
\begin{equation}
\label{eq:defEext}
\Eext := \sum_{i \in I} \El^{(i, f)} + \sum_{i \in I} \sum_{\alpha \in I_i} \El^{(\alpha, p)} + \sum_{i \in I} \sum_{\alpha \in I_i} \El^{(\alpha, c)},
\end{equation}
knowing that $ \El^{(i, f)} = 0$ if $H_i$ is not in the “first layer”, and that $\El^{(\alpha, c)} = 0$ if $\alpha$ is not in “the first layer of the first layer”.
\item  We finally define the “total goal field” (or field at time $1$) $\El_1$ as: 
\begin{equation*}
\El_1 := \begin{cases} \Phi_1  & \text{ on } {\La_r} \\
\Eext & \text{ on }  \La_R \setminus \La_r \\
0 & \text{ outside } \La_R.
\end{cases}
\end{equation*}
The field $\El_1$ is defined piecewise but we have made it so that the normal components of each piece agree along the common boundary.
\end{enumerate}

\subparagraph{Properties of the goal field.}
\begin{enumerate}
   \item By construction, we have:
   \begin{equation}
   \label{diveext}
-\div(\Eext) = \left(\muext - \Leb \right) \text{ in $\La_R \setminus \La_r$},
   \end{equation}
    with a normal component that vanishes on the outer boundary $\partial \La_R$ and matches the one of $\Phi_1$ along the inner boundary $\partial \La_r$. This ensures that the total goal field $\El_1$ satisfies:
   \begin{equation}
   \label{compatibleEl1}
   -\div(\El_1) = \left(\tmueta - \Leb\right) \text{ in } \La_R, \quad \El_1 \cdot \vec{n} = 0 \text{ on } \partial \La_R.
   \end{equation}
   \item We can easily estimate the energy $\int_{\La_R \setminus \La_r} |\Eext|^2$ of $\Eext$ using the controls mentioned above for each field.
   \begin{enumerate}
      \item First, we have:
      \begin{equation*}
      \int_{\La_R \setminus \La_r} \left|\sum_{i \in I} \El^{(i,f)}\right|^2 = \sum_{i \in I} \int_{H_i} \left|\El^{(i,f)}\right|^2 = \sum_{i \in I, \partial H_i \cap \partial \La_r \neq \emptyset} \int_{H_i} \left|\El^{(i,f)}\right|^2.
      \end{equation*}
      The first equality is due to the fact that the “flux correction” fields $\El^{(i, f)}$ are zero outside “their” rectangle $H_i$ and thus mutually orthogonal. The second equality is due to the fact that by construction, we are “correcting” the density $m_i$ only on rectangles that have a side included in $\partial \La_r$ - on the other ones, the density is simply $1$.

      Using \eqref{eq:energyElif}, noting that there are $\O(\frac{R}{\ell})$ rectangles $H_i$ in the “first layer”, and inserting \eqref{EnergyPhi1}, we find:
      \begin{equation}
      \label{sumElif}
\int_{\La_R \setminus \La_r} \left|\sum_{i \in I} \El^{(i,f)}\right|^2  \leq C \ell \int_{\partial \La_r} |\Phi_1|^2 + C \times \frac{R}{\ell} \times \ell^2 \brho^2 \leq C \ell \frac{{\brho\corT{^2}}}{\epsilon R} \WWW_2^2(\bXetta, \mm_0) + C R \ell \brho^2.
      \end{equation}

      \item Secondly, we have, for the same orthogonality reasons:
      \begin{equation}
      \label{sumepa}
  \int_{\La_R \setminus \La_r} \left|\sum_{i \in I} \sum_{\alpha \in I_i} \El^{(\alpha,p)}\right|^2  = \sum_{i \in I}  \sum_{\alpha \in I_i} \int_{\Ralpha} \left|\El^{(\alpha,p)}\right|^2 \leq C \times |\La_R \setminus \La_r| \times 1 = \O(\epsilon R^2).
      \end{equation}

      \item Finally, we can evaluate the total energy due to the completion of charges as:
\begin{equation}
\label{sumeca}
  \int_{\La_R \setminus \La_r} \left|\sum_{i \in I} \sum_{\alpha \in I_i} \El^{(\alpha,c)}\right|^2 \leq C \times \frac{R}{\ell} \times \ell \times \brho^2,
\end{equation}
using the fact that there are $\O(\frac{R}{\ell})$ rectangles $H_i$ in the “first layer” and that in each of this rectangle there is at most $\O(\ell)$ smaller rectangles $\Ralpha$ whose $n_\alpha \neq 0$.
   \end{enumerate}
In conclusion, combining \eqref{sumElif} and \eqref{sumepa} (the last estimate \eqref{sumeca} being of smaller order) we find:
\begin{equation}
\label{eq:EnerEext}
\int_{\La_R \setminus \La_r} |\Eext|^2 \leq  C \ell \frac{{\brho\corT{^2}}}{\epsilon R} \WWW_2^2(\bXetta, \mm_0) + C R \ell \brho^2 + C \epsilon R^2.
\end{equation}
\end{enumerate}

\begin{remark}
\label{rem:couldstophere}
For the sole purpose of Theorem \ref{theo:WDdensity}, one could in fact stop here. Indeed, we have constructed a point configuration $\tbX$ which approximates the initial one, has the right number of points, and for which we can exhibit a compatible electric field ($\El_1$) which is screened and whose energy is controlled in terms of $\WWW_2^2(\bXetta, \mm_0)$. This is enough material to bound Coulomb energies of stationary point processes in terms of Wasserstein distances, see the proof of Proposition \ref{prop:approximate2} below. However, for completeness and future reference, let us proceed with the proof of our “Wasserstein screening” as stated, which now requires that we extend our “goal field” into a family of measures and fields indexed by the time parameter $t$.
\end{remark}

\paragraph{Step 4. Constructing an interpolation}
\newcommand{\di}{\mathrm{dens}_i}
For each $i \in I$ we introduce the “density correction” at time $t \in [0,1]$ as:
\begin{equation}
\label{eq:defdi}
\di(t) := \frac{1}{|H_i|} \int_{\partial H_i \cap \partial \La_r} \left(\Phi_t - t \Phi_1 \right) \cdot \vec{n}.
\end{equation}
Again, $\di$ is actually $0$ unless $H_i$ belongs to the “first layer” of our subdivision. The quantity $\di$ can be positive or negative, but by the same argument as in Claim \ref{claim:sizemi}, we find that by choosing $K$ large enough in \eqref{screenWasscond}:
\begin{equation}
\label{controlondi}
\sup_{t \in [0,1]} \sup_{i \in I} \frac{1}{m_i} \left|\di(t)\right| \leq \frac{1}{100} \min\left(1, \frac{\eta_0^2}{ \|\chi\|_{\infty}}\right),
\end{equation}
with $\chi$ the bump function chosen for truncations (this observation will be useful later).
We can already note that for all $i \in I$ we have 
\begin{equation}
\label{eq:nocorrection01}
\di(0) = \di(1)= 0.
\end{equation} 
Moreover, in view of the definition \eqref{def:Phit} of $\Phi_t$:
\begin{equation}
\label{eq:diprime}
\frac{\dd}{\dd t} \di(t) = \frac{1}{|H_i|} \int_{\partial H_i \cap \partial \La_r} \left(\rm_t \bv_t - \Phi_1\right) \cdot \vec{n}.
\end{equation}
We now consider the corresponding “field correction”, defined as the solution to:
\newcommand{\Elc}{\El^{\mathrm{corr}, (i)}}
\begin{multline}
\label{eq:Elit}
- \div(\Elc_t) = \sum_{\alpha \in I_i} \frac{1}{m_i} \frac{\dd}{\dd t} \di(t) \times \delta^{(\eta_0)}_{p_\alpha} \text{ in } H_i, \\
 \Elc_t \cdot \vec{n} = \begin{cases} (\rm_t \bv_t - \Phi_1) \cdot \vec{n} & \text{ on } \partial H_i \cap \partial \La_r,\\
0 & \text{ on } \partial H_i \setminus \partial \La_r.
\end{cases}
\end{multline}
Note that by construction there are $|H_i| m_i$ rectangles $\Ralpha$ (and thus $|H_i| m_i$ points $p_\alpha$) within $H_i$, hence we are placing a total mass $|H_i| \times m_i \times \frac{1}{m_i} \times \frac{\dd}{\dd t} \di(t)$, which, by definition \eqref{eq:diprime} of $\frac{\dd}{\dd t} \di(t)$, is compatible with the boundary data chosen for $\Elc_t$.

\begin{claim}[Energy of the field correction]
\label{claimFieldCorrection}
For some $C$:
\begin{equation}
\label{eq:energyElit}
\sum_{i \in I} \int_{H_i} |\Elc_t|^2 \leq C \ell \int_{\partial \La_r} \left(\brho \rm_t |\bv_t|^2 + |\Phi_1|^2\right).
\end{equation}
\end{claim}
\begin{proof}[Proof of Claim \ref{claimFieldCorrection}]
For each $H_i$ in the first layer, we can decompose $\Elc_t$ as a sum of (using the terminology of \cite{petrache2017next}):
\begin{enumerate}
   \item A field that “corrects the flux” i.e.\ solves
   \begin{equation*}
   - \div(\El^{(i,f)}_t) = \frac{\dd}{\dd t} \di(t) \text{ in } H_i, \quad \El^{(i,f)}_t \cdot \vec{n} = \begin{cases}
  (\rm_t \bv_t - \Phi_1) \cdot \vec{n} & \text{ on } \partial H_i \cap \partial \La_r \\
   0 & \text{ on } \partial H_i \setminus \partial \La_r.
   \end{cases}
   \end{equation*}
   \item Fields that “place a charge without flux creation” in each $\Ralpha$ ($\alpha \in I_i$) i.e.\ solve
   \begin{equation*}
   - \div(\El^{(\alpha,p)}_t) = \frac{\dd}{\dd t} \di(t) \left( \frac{1}{m_i} \delta^{(\eta_0)}_{p_\alpha} - 1 \right) \text{ in } \Ralpha, \quad \El^{(\alpha,p)}_t \cdot \vec{n} = 0 \text{ on } \partial \Ralpha,
   \end{equation*}
   which again is solvable because $|\Ralpha| = \frac{1}{m_i}$.
\end{enumerate}
Using the corresponding energy estimates from \cite{petrache2017next} (as mentioned above), we find:
\begin{equation}
\label{energycorrect1}
\int_{H_i} |\Elc_t|^2 \leq C \ell \int_{\partial H_i \cap \partial \La_r} \left(|\rm_t|^2 |\bv_t|^2 + |\Phi_1|^2\right) + C \ell^2 \times \left|  \frac{\dd}{\dd t} \di(t) \right|^2, 
\end{equation}
which we can translate into \eqref{eq:energyElit} using the uniform bound \eqref{unifmt} on the density $\rm_t$. Indeed, using \eqref{eq:diprime} and Cauchy-Schwarz's inequality, we see that:
\begin{equation*}
\ell^2 \left|  \frac{\dd}{\dd t} \di(t) \right|^2 \leq C \frac{\ell^2}{\ell^4} \times \ell \times \int_{\partial H_i \cap \partial \La_r} \left(|\rm_t|^2 |\bv_t|^2 + |\Phi_1|^2\right), 
\end{equation*}
which is negligible compared to the first term in the right-hand side of \eqref{energycorrect1}. Summing \eqref{energycorrect1} over $i \in H_i$ yields \eqref{eq:energyElit}.
\end{proof}

\subparagraph{Interpolation measure and field}
\newcommand{\tv}{\widetilde{\bv}}
\newcommand{\trm}{\tilde{\rm}}
\newcommand{\trmext}{\tilde{\rm}^{\mathrm{ext}}}

For $t \in [0,1]$, we let $\theta(t) := 1 - (1-t)^2$ (cf.\ the proof of Lemma \ref{lem:W2Hm1}) and  define:
\begin{itemize}
   \item The measure $\muext_t$ as:
\begin{equation}
\label{defmuextT}
\muext_t := t \muext + \left(1-t \right) \Leb_{\La_R \setminus \La_r} + \sum_{i \in I} \sum_{\alpha \in I_i} \frac{1}{m_i} \di(t) \delta^{(\eta_0)}_{p_\alpha},
\end{equation}
and the measure $\tmu_t$ (total measure at time $t$)
\begin{equation}
\label{def:tmut}
\tmu_t := \mm_{\theta(t)} \1_{\La_r} + \muext_{\theta(t)} \1_{\La_R \setminus \La_r},
\end{equation}
where we recall that $\mm_t$ was chosen in Step 0. 

   \item The fields $\Eext_t$ as :
   \begin{equation}
   \label{defElext}
   \Eext_t := \theta'(t) \left(\Eext + \sum_{i \in I} \Elc_{\theta(t)}\right),
   \end{equation}
   with $\Eext$ as in \eqref{eq:defEext} and $\Elc$ as in \eqref{eq:Elit}.
   \item The velocity field $\tbv_t$ as:
   \begin{equation}
   \label{deftv}
\tv(t) :=  \begin{cases} \theta'(t) \bv_{\theta(t)} & \text{ on } \La_r \\
                     \frac{\Eext_t}{\rmuext_{\theta(t)}}  & \text{ on } \La_R \setminus \La_r.
            \end{cases}
   \end{equation}
   where $\rmuext_t$ denotes the density of $\muext_t$, which was defined in \eqref{defmuextT}.
\end{itemize}
Let us immediately note that in view of our definitions (see in particular \eqref{eq:nocorrection01}) we have:
\begin{equation*}
\tmu(0) = \Leb_{\La_R}, \quad \tmu(1) = \tmueta,
\end{equation*}
so $\tmu_t$ interpolates between the Lebesgue measure on $\La_R$ and our spread-out point configuration $\tmueta$.
\begin{claim}
\label{claim:bonneinterpol}
The following holds for all $t \in (0,1)$
\begin{enumerate}
   \item $\muext_t$ is a positive measure, and thus so is $\tmu_t$.
   \item The measure $\muext_t$ has a density $\rmuext$ with respect to the Lebesgue measure such that:
   \begin{equation}
   \label{rmuextLB}
        \frac{1}{C} \left(1 - \theta(t)\right) \leq \rmuext_t \leq C' \brho
   \end{equation}
   \item The pair $t \mapsto (\tmu_t, \tv_t)$ solves the continuity equation:
   \begin{equation*}
   \frac{\dd}{\dd t} \tmm_t = - \div\left( \tmm_t \tv_t \right).
       \end{equation*}
\end{enumerate}
\end{claim}
\begin{proof}[Proof of Claim \ref{claim:bonneinterpol}]
The only possible negative contribution to $\muext_t$ comes from the density correction, which is never too negative in view of \eqref{controlondi}. More precisely
\begin{enumerate}
   \item If $t \in [0, \hal]$, we have $(1-t) \Leb_{\La_R \setminus \La_r} \geq \hal \Leb_{\La_R \setminus \La_r}$ and by \eqref{controlondi}:
   \begin{equation*}
   \sup_{i \in I} \sup_{\alpha \in I_i} \frac{1}{m_i} \left|\di(t)\right| \delta^{(\eta_0)}_{p_\alpha} \leq    \sup_{i \in I} \sup_{\alpha \in I_i} \frac{1}{m_i} \left|\di(t)\right| \times \frac{1}{\eta_0^2} \|\chi\|_{\infty} \leq \frac{1}{100}.
   \end{equation*}
   \item If $t \in [\hal, 1]$, we have 
   \begin{equation*}
\muext_t \geq \hal \muext - \sum_{i \in I} \sum_{\alpha \in I_i} \frac{1}{m_i} \left|\di(t)\right| \delta^{(\eta_0)}_{p_\alpha} \geq \sum_{i \in I} \sum_{\alpha \in I_i} \delta^{(\eta_0)}_{p_\alpha} \left(1 - \frac{1}{m_i} \left|\di(t)\right|\right),
   \end{equation*}
   but from \eqref{controlondi} we know that $\sup_{i \in I} \sup_{\alpha \in I_i} \frac{1}{m_i} \left|\di(t)\right| \leq \frac{1}{100}$.
\end{enumerate}
So in both cases, we have enough positive mass (coming either from the Lebesgue measure of from the spread-out charges) to cover any possible negative contribution from the $\di(t)$
's.

By a similar argument, we bound the density $\rmuext_t$ from below (using the Lebesgue part) and from above (using the density bound on $\muext$) as follows:
\begin{equation*}
\frac{99}{100} (1-\theta(t)) \leq \rmuext_t \leq (1 - \theta(t)) + C \theta(t) \brho \leq C' \brho,
\end{equation*}
which proves the second item. 

We can directly compute, for $t \in (0,1)$, in view of \eqref{def:tmut} and the definitions above:
\begin{equation}
\label{dddttmmt}
\frac{\dd}{\dd t} \tmm_t = 
\begin{cases}
\theta'(t) \frac{\dd}{\dd t} \mm_{\theta(t)} & \text{ inside } \La_r \\
\theta'(t) \left(\muext - \Leb_{\La_R \setminus \La_r}\right) + \sum_{i \in I} \sum_{\alpha \in I_i} \theta'(t) \frac{\dd}{\dd t} \di(\theta(t)) \delta^{(\eta_0)}_{p_\alpha} & \text{ in } \La_R \setminus \La_r. 
\end{cases}
\end{equation}
By construction, inside $\La_r$ $(\mm_t, \bv_t)$ solves a continuity equation which implies that:
\begin{equation*}
-\div(\tmm_t \tv_t) = -\div\left(\mm_{\theta(t)} \theta'(t) \bv_{\theta(t)}\right) =  \theta'(t) \frac{\dd}{\dd t} \mm_{\theta(t)},
\end{equation*}
this last expression being exactly what we find in \eqref{dddttmmt}. On the other hand, in $\La_R \setminus \La_r$ we have (in view of \eqref{def:muext}, \eqref{deftv} and of the expressions \eqref{diveext}, \eqref{eq:Elit}):
\begin{equation*}
-\div(\tmm_t \tv_t) = -\div(\muext_{\theta(t)} \tv_t) = -\div(\Eext_t) = \theta'(t) \left( \muext - \Leb_{\La_R \setminus \La_r}\right) + \sum_{i \in I} \sum_{\alpha \in I_i} \theta'(t) \frac{\dd}{\dd t} \di(\theta(t)) \delta^{(\eta_0)}_{p_\alpha},
\end{equation*}
which coincides with the expression found in \eqref{dddttmmt}. Finally, it remains to observe that along $\partial \La_r$ we have:
\begin{multline*}
\trmext_t \tv_t \cdot \vec{n} = \Eext_t \cdot \vec{n} = \theta'(t) \Eext \cdot \vec{n} + \sum_{i \in I} \theta'(t) \Elc_{\theta(t)} \cdot \vec{n} 
= \theta'(t) \Phi_1 \cdot \vec{n} + \theta'(t) \left( \rm_{\theta(t)} \bv_{\theta(t)} - \Phi_1 \right) \cdot \vec{n} \\ = \theta'(t) \rm_{\theta(t)} \bv_{\theta(t)} \cdot \vec{n},
\end{multline*}
thus ensuring that no divergence is created along $\partial \La_r$.
\end{proof}

\paragraph{Step 5. Conclusion}
We may now estimate the quadratic cost associated to the family $(\tmm_t, \tv_t)$. By construction we have
\begin{equation*}
\int_{0}^1  \left(\int_{\La_R}  |\tv(t)|^2 \dd \tmm_t \right) \dd t = \int_{0}^1  \theta'(t)^2  \left(\int_{\La_r} |\bv_{\theta(t)}|^2 \dd \mm_{\theta(t)} \right) \dd t + \int_{0}^1  \left(\int_{\La_R \setminus \La_r} |\Eext_t|^2 \frac{1}{\rmuext_{\theta(t)}} \right) \dd t.
\end{equation*}

On the one hand, a simple change of variables, a bound on $\theta'$ and \eqref{costmmt} yield:
\begin{equation}
\label{mainpart}
\int_{0}^1  \theta'(t)^2  \left(\int_{\La_r} |\bv_{\theta(t)}|^2 \dd \mm_{\theta(t)} \right) \dd t \leq C \WWW_2^2(\bXetta, \mm_0).
\end{equation}
On the other hand, using the definition of $\Eext_t$, \eqref{eq:EnerEext} and \eqref{eq:energyElit}, the lower bound \eqref{rmuextLB} on $\rmuext$, as well as \eqref{eq:meanValue} and \eqref{EnergyPhi1} to control the flux along $\partial \La_r$, we obtain (note that $\frac{(\theta')^2}{1 - \theta}$ is bounded):
\begin{multline*}
\int_{0}^1  \left(\int_{\La_R \setminus \La_r} |\Eext_t|^2 \frac{1}{\rmuext_{\theta(t)}} \right) \dd t
 \leq C \int_{0}^1 \frac{\theta'(t)^2}{(1-\theta(t))} \left(\int_{\La_R \setminus \La_r} |\Eext|^2 + \sum_{i \in I} \int_{H_i} |\Elc|^2 \right) \dd t \\
 \leq
 C \ell \frac{{\brho\corT{^2}}}{\epsilon R} \WWW_2^2(\bXetta, \mm_0) + C R \ell \brho^2 + C \epsilon R^2 + C \ell  \int_{0}^1  \int_{\partial \La_r} \left( {\brho} \rm_t |\bv_t|^2 + |\Phi_1|^2\right) \dd t \\
 \leq
  C \ell \frac{{\brho\corT{^2}}}{\epsilon R} \WWW_2^2(\bXetta, \mm_0) + C R \ell \brho^2 + C \epsilon R^2,
\end{multline*} 
which combined with \eqref{mainpart} shows that:
\begin{equation*}
\WWW_2^2(\tbX, \Leb_{\La_R}) \leq C \WWW_2^2(\bXetta, \mm_0) \left(1 + \epsilon \brho\corT{^2} \right) + C R^2 \left( \epsilon + \epsilon^2 \brho^2 \right).
\end{equation*} 
Using \eqref{trianginegXetta} to compare $\WWW_2^2(\bXetta, \mm_0)$ and $\WWW_2^2(\bX, \mm_0)$ concludes the proof of \eqref{WassTbX}.
\end{proof}

\subsection{Constructing an approximate sequence}
In a similar fashion to Section \ref{sec:E1W2}, we reduce the proof of Theorem \ref{theo:WDdensity} to the following proposition.
\begin{proposition}
\label{prop:approximate2}
Let $\bbX$ be a stationary point process satisfying the uniform density bound \eqref{eq:densitybound} and such that $\Ww_2^2(\bbX, \Leb)$ is finite. Then there exists a sequence $(\bbX_n)_{n \geq 1}$ of stationary point processes such that:
\begin{enumerate}
   \item $\bbX_n$ converges in distribution to $\bbX$ as $n \to \infty$.
   \item $\Coul_{\eta_0}(\bbX_n) \leq C \brho  \left(\Ww^2_2(\bbX, \Leb) + 1\right) (1+o_n(1))$.
\end{enumerate}
\end{proposition}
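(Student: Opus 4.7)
The strategy closely mirrors the proof of Proposition \ref{prop:approximate}, but with the roles of Wasserstein and Coulomb exchanged: instead of starting from a stationary compatible electric field and deducing Wasserstein controls, we start from a stationary optimal coupling and use the Wasserstein screening of Lemma \ref{lem:ScreeningWass}, followed by Corollary \ref{coro:pointprocessWassH1}, to produce a compatible electric field with controlled energy. Concretely, fix $c \in (0, 2/7)$ (e.g. $c = 1/8$), set $\epsilon_n := n^{-c}$, and let $Q$ be a stationary coupling between $\bbX$ and $\Leb$ realizing $\Ww_2^2(\bbX, \Leb)$. For each $n$, we extract from $Q$ a (random) coupling $\Pi_n$ on $\La_n \times \R^2$ between $\bX \cap \La_n$ and a sub-Lebesgue measure $\mm_0^{(n)}$ of mass $|\bX \cap \La_n|$ that coincides with $\Leb$ on $\La_{n(1-\epsilon_n)}$; producing such a $\mm_0^{(n)}$ requires a minor surgery near $\partial \La_n$, rerouting any mass of $\Leb_{\La_{n(1-\epsilon_n)}}$ coupled by $Q$ to points of $\bbX$ outside $\La_n$ onto points of $\bX$ inside the buffer annulus, at a total extra cost of order $\epsilon_n n^2$.

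Define the good event $\Omega_n$ by requiring the four screenability conditions of Lemma \ref{lem:ScreeningWass} to hold for the pair $(\bX, \Pi_n)$ with $R = n$, $\epsilon = \epsilon_n$. The condition \eqref{eq:brhoKepsilon} reads $\brho \leq \epsilon_n^2 n/K$, and \eqref{WassCondNumberPoints} reads $|\bX \cap \La_n| \leq \epsilon_n^7 n^4/(K\brho)$; both are automatic for large $n$ thanks to \eqref{eq:densitybound}, which gives $|\bX \cap \La_n| \leq \brho n^2$. The structural condition \eqref{secondSC} is built into our construction of $\mm_0^{(n)}$. The cost condition \eqref{screenWasscond} holds on an event of probability tending to $1$, by Markov's inequality and the fact that the expected cost of $\Pi_n$ is $\leq n^2 \Ww_2^2(\bbX, \Leb)(1+o_n(1))$. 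On $\Omega_n$, Lemma \ref{lem:ScreeningWass} produces $\tbX_n$ with exactly $n^2$ points in $\La_n$, coinciding with $\bX$ on $\La_{n(1-\epsilon_n)}$, with density $\leq C\brho$, and whose $2$-Wasserstein distance to $\Leb_{\La_n}$ is controlled by \eqref{WassTbX}. Corollary \ref{coro:pointprocessWassH1} then yields a screened local electric field $\tEl_n$ compatible with $\tbX_n$ in $\La_n$ with
\begin{equation*}
\frac{1}{n^2}\int_{\La_n} |\tEl_{n,\eta_0}|^2 \leq C \brho \left(\frac{1}{n^2}\WWW_2^2(\tbX_n, \Leb_{\La_n}) + 1\right) \leq C\brho\left(\Ww_2^2(\bbX, \Leb) + 1\right)(1+o_n(1)).
\end{equation*}

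We then proceed exactly as in Section \ref{sec:E1W2}: tile $\R^2$ by translates of $\La_n$ along $n\Z^2$, paste into each tile an independent copy of $\tbX_n$ (sampled conditionally on $\Omega_n$), and glue the corresponding screened local fields $\tEl_n$ along the common walls (the normal components agree since they all vanish), obtaining a global configuration $\bbX_n^{\mathrm{glob}}$ and a global field $\bEl_n^{\mathrm{glob}}$ compatible with it. A further uniform random shift by $\bshift_n \in \La_n$ makes the pair $(\bbX_n, \bEl_n)$ jointly stationary. Convergence in distribution $\bbX_n \to \bbX$ follows as in Lemma \ref{lem:applyscreening} from $\Proba(\Omega_n) \to 1$ and the fact that $\tbX_n$ and $\bX$ agree on $\La_{n(1-\epsilon_n)}$. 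Stationarity and a law-of-large-numbers argument identical to Claim \ref{claim:CoutReparti} give
\begin{equation*}
\Coul_{\eta_0}(\bbX_n) \leq \E[|\bEl_{n,\eta_0}(0)|^2] = \frac{1}{n^2}\E\left[\int_{\La_n}|\tEl_{n,\eta_0}|^2\right] \leq C\brho\left(\Ww_2^2(\bbX, \Leb) + 1\right)(1 + o_n(1)),
\end{equation*}
as required.

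The main technical obstacle is the preparation of $\Pi_n$ and $\mm_0^{(n)}$ satisfying \eqref{secondSC}: while one could, in principle, try to avoid this surgery by weakening the screening lemma, it seems cleaner to perform it at this stage, and verifying that the rerouting near $\partial \La_n$ can be done with only an $O(\epsilon_n n^2)$ extra quadratic cost is where the density bound \eqref{eq:densitybound} and the stationarity of $Q$ are genuinely used. Once $\Pi_n$ is available, the remainder of the argument is a routine adaptation of Section \ref{sec:E1W2} and of Remark \ref{rem:couldstophere}.
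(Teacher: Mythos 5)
Your overall architecture is exactly the paper's: extract from a stationary optimal coupling a local coupling in $\La_n$, check the screenability conditions of Lemma \ref{lem:ScreeningWass} on an event of probability $1-o_n(1)$, apply the Wasserstein screening and then Corollary \ref{coro:pointprocessWassH1} to get a screened local field with controlled energy, and finally tile, glue along the (Neumann) walls and average over a uniform shift. The verifications of \eqref{eq:brhoKepsilon}, \eqref{WassCondNumberPoints} and \eqref{screenWasscond} are fine (for \eqref{WassCondNumberPoints} you invoke the almost sure density bound, the paper uses a Markov bound on the cost; both work), and the conclusion via gluing and lower semi-continuity is the same as in the paper.

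The genuine gap is the one you yourself flag: the condition \eqref{secondSC}, i.e.\ that $\mm_0$ coincide with $\Leb$ on $\La_{n(1-\epsilon_n)}$. Your proposed ``surgery'' is not carried out, and as described it runs into real obstructions. First, rerouting the missing Lebesgue mass onto points of $\bX$ in the buffer annulus changes the first marginal of the coupling (those atoms would carry mass $>1$), so you must simultaneously strip an equal amount of mass from $\mm_0$ outside $\La_{n(1-\epsilon_n)}$ and re-match it; this bookkeeping is not addressed. Second, the surgery is simply impossible on configurations with a large deficit of points: \eqref{secondSC} forces $|\bX\cap\La_n|=|\mm_0|\geq |\La_{n(1-\epsilon_n)}|$, so no rerouting can repair a realization with too few points in $\La_n$, and you would need a separate probabilistic estimate to discard such realizations. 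Third, the claimed $O(\epsilon_n n^2)$ extra cost is not justified. The paper avoids all of this by \emph{not} modifying the induced local coupling at all: it upgrades the averaged cost bound to an $L^\infty$ bound on displacements via \cite[Lemma 4.1]{Michael_GOLDMAN_2020}, which shows that with probability $1-o_n(1)$ no transported pair meeting $\B_n$ has displacement $\geq n^{1-c}$; consequently all Lebesgue mass on $\La_{n-n^{1-c}}$ is already sent to points of $\La_n$, so the induced $\mm_0$ automatically equals $\Leb$ there, and the (rare) failure of this event is simply absorbed into $\Omega_n^c$. You should replace the surgery by this argument (or supply a complete substitute handling the three points above); as written, the key step of the proof is missing.
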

Since the Coulomb energy is lower semi-continuous on the space of stationary point processes (see \cite[Lemma 3.9]{leble2017large} \corT{where $\widetilde{\mathbb{W}}$ corresponds to our $\Coul$}), this implies~\eqref{eq:CoulWwbrho}.

\begin{proof}[Proof of Proposition \ref{prop:approximate2}]
The proof of Proposition \ref{prop:approximate2} is almost identical to the proof of Proposition \ref{prop:approximate}, replacing Lemma \ref{lem:screening} (screening of electric fields) by Lemma \ref{lem:ScreeningWass} (screening in Wasserstein space) and Corollary \ref{coro:W2Hm1bigbox} ($W_2$-$H^{-1}$ inequality for screened configurations) by Corollary \ref{coro:pointprocessWassH1} (the reverse inequality).

\paragraph{Step 1. Applying the screening.}
\newcommand{\btEln}{{\tEl}^{\bullet}}
\begin{lemma}
\label{lem:applyscreening2}
Let $c := \frac{1}{10}$. For all $n \geq 1$, there exists an event $\Omega_n$ and a random point configuration $\btbX_n$ in $\La_n$ such that:
\begin{enumerate}
   \item $\lim_{n \to \infty} \Proba(\bbX \in \Omega_n) = 1$
   \item $\btbX_n$ has almost surely exactly $n^\d$ points in $\La_n$.
   \item $\btbX_n \cap \La_{n - n^{1-c}}=\bbX \cap \La_{n - n^{1-c}}$ on $\Omega_n$.
   \item There exists a random local electric field $\btEln$ compatible with $\btbX_n$ in $\La_n$ and screened almost surely such that:
   \begin{equation} \label{W2tPpndeuxieme}
   \frac{1}{n^\d} \E \left[ \int_{\La_n} |\btEln_{\eta_0}|^2 \right] \leq C \brho \left(\Ww^2_2(\bbX, \Leb) + 1 \right).
   \end{equation}
\end{enumerate}
\end{lemma}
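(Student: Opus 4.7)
The plan is to mirror the proof of Lemma \ref{lem:applyscreening}, substituting the Wasserstein screening of Section \ref{sec:ScreenWass} for the electric one, and closing the loop via Corollary \ref{coro:pointprocessWassH1}, which converts a Wasserstein control on a configuration with a uniform density bound into a bound on the energy of a compatible screened electric field.

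First, extract from $\WD$ a stationary coupling $\mathbf{q}$ between $\bbX$ and $\Leb$ attaining $\E[\iint_{\La_1 \times \R^2}|x-y|^2 \, d\mathbf{q}] = \Ww_2^2(\bbX, \Leb)$; set $c := \tfrac{1}{10}$, $\epsilon_n := n^{-c}$, and $\bX := \bbX \cap \La_n$. The next task is to produce from $\mathbf{q}$ a random coupling $\bPi_n$ between $\bX$ and a random measure $\mm_n$ on $\R^2$ with $|\mm_n| = |\bX|$, $\mm_n \leq \Leb$, and $\mm_n \equiv \Leb$ on $\La_{n(1-\epsilon_n)}$ (condition \eqref{secondSC} of the Wasserstein screening). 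The natural starting point is $\tilde{\mm}_n := \mathbf{q}(\La_n \times \cdot)$, which has the right mass and is $\leq \Leb$, but may leave ``gaps'' on the set $B_n := \{y \in \La_{n(1-\epsilon_n)} : T_{\mathbf{q}}(y) \notin \La_n\}$. One repairs the gaps by swapping Lebesgue mass between $B_n$ and a matching subset $A_n \subset \La_n \setminus \La_{n(1-\epsilon_n)}$, reshuffling the couplings accordingly. The key observation is that any $y \in \La_{n(1-\epsilon_n)}$ with $T_{\mathbf{q}}(y) \notin \La_n$ satisfies $|T_{\mathbf{q}}(y) - y| \geq \dist(y, \partial \La_n)$, so a stratified Markov bound yields $\E[|B_n|] = O(\Ww_2^2(\bbX,\Leb)/\epsilon_n)$ and places $B_n$ near $\partial \La_{n(1-\epsilon_n)}$. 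This concentration allows the swap pairs to be chosen at short distance and gives $\E[\iint_{\La_n \times \R^2}|x-y|^2 \, d\bPi_n] \leq C n^2 \Ww_2^2(\bbX, \Leb)$.

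Now define $\Omega_n$ as the event that the above construction is well-defined and that the three screenability conditions \eqref{eq:brhoKepsilon}, \eqref{WassCondNumberPoints}, and \eqref{screenWasscond} of Lemma \ref{lem:ScreeningWass} all hold with $R = n$ and $\epsilon = \epsilon_n$. Condition \eqref{eq:brhoKepsilon} is deterministic and holds for $n$ large; condition \eqref{WassCondNumberPoints} follows from $|\bX| \leq C\brho n^2 \ll n^{4 - 7c}/\brho$ by the density bound \eqref{eq:densitybound}; condition \eqref{screenWasscond} holds with probability $1 - o_n(1)$ by Markov applied to the cost of $\bPi_n$, whose expectation is $O(n^2 \Ww_2^2)$, well below the threshold $n^{4 - 7c}/\brho$. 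On $\Omega_n$, Lemma \ref{lem:ScreeningWass} produces a configuration $\tbX$ in $\La_n$ with exactly $n^2$ points, coinciding with $\bbX$ on $\La_{n - n^{1-c}}$, with every point at distance $\geq \eta_0$ from $\partial \La_n$, satisfying $\sup_x |\tbX \cap \B_1(x)| \leq C\brho$, and with the $\WWW_2^2$-bound \eqref{WassTbX}. Declare $\btbX_n$ to have the law of $\tbX$ under $\P[\cdot \mid \Omega_n]$: the first three items of the lemma follow directly. Applying Corollary \ref{coro:pointprocessWassH1} with $\eta = \eta_0$ to each realization then yields a screened electric field $\btEl_n$ compatible with $\btbX_n$ in $\La_n$ satisfying
\[
\frac{1}{n^2}\int_{\La_n} |(\btEl_n)_{\eta_0}|^2 \leq C\brho\left(\frac{1}{n^2}\WWW_2^2(\btbX_n, \Leb_{\La_n}) + 1\right).
\]
Taking expectations, inserting \eqref{WassTbX} and the cost bound on $\bPi_n$, using $\P(\Omega_n) = 1 - o_n(1)$ to exchange conditional and unconditional expectations, and absorbing the terms $\epsilon_n \brho$, $\epsilon_n^2 \brho^2$, $\epsilon_n$ into an $o_n(1)$ correction yields \eqref{W2tPpndeuxieme}.

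The main obstacle is the construction of $(\mm_n, \bPi_n)$: enforcing $\mm_n \equiv \Leb$ exactly on $\La_{n(1-\epsilon_n)}$ while keeping the expected quadratic cost of $\bPi_n$ at the optimal order $n^2 \Ww_2^2(\bbX,\Leb)$. A naïve Markov bound gives ``crossing volume'' $O(1/\epsilon_n^2)$, which combined with worst-case swap distance $O(n)$ would inflate the cost by a factor $1/\epsilon_n^2$ and break the final estimate. One must exploit the concentration of $B_n$ near $\partial \La_{n(1-\epsilon_n)}$ and the resulting possibility of short-distance swaps to recover the optimal bound.
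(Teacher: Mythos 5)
Your overall architecture coincides with the paper's: define $\Omega_n$ as the event that the screenability conditions of Lemma \ref{lem:ScreeningWass} hold, verify each one with probability $1-o_n(1)$, apply the Wasserstein screening, and convert the resulting $\WWW_2^2$-bound into an electric-field bound via Corollary \ref{coro:pointprocessWassH1}. (Your verification of \eqref{WassCondNumberPoints} directly from the density bound is in fact simpler than the paper's, which uses a cost-versus-excess Markov argument; both work.) The one genuinely different step is the verification of \eqref{secondSC}. The paper does \emph{not} modify the coupling: it passes to the regularized process $\bbX_{\eta_0}$, controls the coupling cost on $\La_{2n}$ by Markov, and invokes the deterministic $L^\infty$ displacement estimate of \cite[Lemma 4.1]{Michael_GOLDMAN_2020} to conclude that with probability $1-o_n(1)$ no mass travels farther than $n^{1-c}$, so the second marginal of the restricted coupling \emph{already} equals $\Leb$ on $\La_{n-n^{1-c}}$ and no repair is needed. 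This route is cleaner at the price of citing an external lemma.

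Your alternative — repairing the target measure by swapping the deficit set $B_n$ against surplus mass — can be made to work, but as sketched it has two gaps. First, for a measure $\mm_n$ with $\mm_n\leq\Leb$, $|\mm_n|=|\bbX\cap\La_n|$ and $\mm_n\equiv\Leb$ on $\La_{n(1-\epsilon_n)}$ to exist at all, one needs $|\bbX\cap\La_n|\geq|\La_{n(1-\epsilon_n)}|$; this must be added to $\Omega_n$ (it holds with probability $1-o_n(1)$ by the same cost-versus-discrepancy Markov bound, but it is not automatic). Second, the swaps are \emph{not} all short: deficit can sit at any depth $s\in[\tfrac{1}{2}\epsilon_n n, n]$ inside $\La_{n(1-\epsilon_n)}$ while the surplus lies in the outer annulus, so the swap distance for deficit at depth $s$ is of order $s$, up to $O(n)$. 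The estimate that saves you is shell-by-shell: the deficit mass at depth comparable to $s$ is at most $c_s/s^2$, where $c_s$ is the coupling cost already paid by those pairs, so the extra swap cost is $\lesssim s^2\cdot c_s/s^2=c_s$, which sums to $O(n^2\,\Ww_2^2(\bbX,\Leb))$. A single Markov bound at depth $\epsilon_n n$ only gives $\E[|B_n|]=O(\Ww_2^2/\epsilon_n^2)$ (not $O(\Ww_2^2/\epsilon_n)$ as you assert), and combined with worst-case swap distances it loses a factor and breaks \eqref{W2tPpndeuxieme}. You correctly flagged this as the main obstacle; to close it, either carry out the dyadic-shell accounting above or adopt the paper's $L^\infty$ route.
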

\begin{proof}[Proof of Lemma \ref{lem:applyscreening2}]
Let $\epsilon_n = n^{-c}$ with $c = \frac{1}{10}$. 

\paragraph{The screenability conditions are often satisfied.}
We choose for $\Omega_n$ the “screenability conditions” of Lemma \ref{lem:ScreeningWass}, i.e.\ we say that a point configuration $\bX$ belongs to $\Omega_n$ when \eqref{eq:brhoKepsilon}, \eqref{WassCondNumberPoints} are satisfied (with $\epsilon = \epsilon_n$ and $R = n$) and when there exists a measure $\mm_0$ on $\R^2$ whose mass is equal to $|\bX\cap\La_n|$, which coincides with the Lebesgue measure on $\La_{n - n^{1-c}}$ and such that $\mm_0 \leq \Leb$, and a coupling $\Pi$ between $\bX\cap\La_n$ and $\mm_0$ satisfying \eqref{screenWasscond}. We want to prove that $\Proba(\bbX \notin \Omega_n)$ tends to $0$ as $n \to \infty$.

First, since $\epsilon_n \to 0$ in such a way that $\epsilon_n^2 n \to \infty$, it is clear that the condition \eqref{eq:brhoKepsilon} is satisfied for all $n$ large enough (depending on $\brho$).

To show that big excesses of points in $\La_n$ are unlikely and prove that \eqref{WassCondNumberPoints} is often satisfied, we can use the following simple observation: if for some $R \geq 1$ we have $|\bX \cap \La_R| \geq M R^2$ with $M \geq 10$, then for all couplings $\Pi$ between $\bX$ and $\Leb$:
\begin{equation*}
\iint_{\La_{R} \times \R^2} |x-y|^2 \dd \Pi(x,y) \geq \lambda M^2 R^4
\end{equation*}
(for some universal constant $\lambda$). We have chosen $\epsilon_n$ in such a way that, as $n \to \infty$:
\begin{equation*}
M_n := \frac{\epsilon_n^7 n^4}{K \brho n^2} = \frac{1}{K \brho} n^{\frac{13}{10}} \to \infty,
\end{equation*} 
and we may thus write, using the definition of $\Ww_2$ and Markov's inequality:
\begin{multline}
\label{raretoomanypoints}
\Proba\left[ |\bbX \cap \La_n| \geq \frac{\epsilon_n^7 n^4}{K \brho}  \right] = \Proba\left[ |\bbX \cap \La_n| \geq M_n n^2  \right] \leq \Proba\left[ \inf_{\Pi \in \Cpl(\bbX, \Leb)} \iint_{ \La_{n} \times \R^2} |x-y|^2 \dd \Pi(x,y) \geq \lambda M_n^2 n^4 \right] \\
\leq \frac{\Ww_2^2(\bbX, \Leb) \times n^2}{\lambda M_n^2 n^4} \to 0,
\end{multline}
so the condition \eqref{WassCondNumberPoints} is satisfied with probability $1 - o_n(1)$. We now focus  on the conditions \eqref{screenWasscond} and \eqref{secondSC}, which involve finding a good coupling.

\newcommand{\Pig}{\Pi^{\mathrm{glob}}}
\newcommand{\Pil}{\Pi^{\mathrm{loc}}}
\newcommand{\tPig}{\tilde{\Pi}^{\mathrm{glob}}}
\newcommand{\tPil}{\tilde{\Pi}^{\mathrm{loc}}}

By assumption, $\Ww_2^2(\bbX, \Leb)$ is finite, hence for almost every realisation $\bbX$ there exists a (global) coupling $\Pig(\cdot, \cdot ; \bbX)$ between $\bbX$ and $\Leb$, such that for all $r > 0$:
\begin{equation}
\label{meancost_squares}
\E\left[ \frac{1}{r^2}  \iint_{\Lambda_r\times \R^2}|x-y|^2 \tilde{\Pi}(dx,dy; \bbX) \right] = \Ww_2^2(\bbX, \Leb) < + \infty.
\end{equation}
This global (random) coupling induces a “local” (random) coupling $\Pil_n(\cdot, \cdot ; \bbX)$ between the measure $\bbX \cap \La_n$ on the one hand and a certain measure $\mm_0$, which is such that $\mm_0 \leq \Leb$ and whose mass is equal to $|\bbX \cap \La_n|$. A simple Markov's bound yields:
\begin{multline*}
 \Proba\left[\iint_{\Lambda_n \times \Rd}|x-y|^2 \Pil_n(dx,dy; \bbX) \geq \frac{1}{K \brho} \epsilon_n^7 n^4 \right] = \Proba\left[\iint_{\Lambda_n \times \Rd}|x-y|^2 \Pig(dx,dy; \bbX) \geq \frac{1}{K \brho} \epsilon_n^7 n^4 \right]
 \\
 \leq  \frac{K \brho n^2 \Ww_2^2(\bbX, \Leb)}{n^{4-7c}}.
\end{multline*}
By our choice of $c$, the right-hand side tends to $0$ as $n \to \infty$, hence the condition \eqref{screenWasscond} is satisfied with probability $1 - o_n(1)$.

 It remains to prove that \eqref{secondSC} is often satisfied, which involves finding an $L^{\infty}$ bound on the displacement of the mass inside $\La_{n(1- \epsilon_n)}$ under $\Pil_n$. We will use a result which is stated for bounded densities, so we first turn the (random) coupling $\Pig$ between $\bbX$ and the Lebesgue measure into a (random) coupling $\tPig(\cdot, \cdot ; \bbX)$ between $\bbX_{\eta_0}$ and $\Leb$ by going through the obvious coupling between $\bbX$ and $\bbX_{\eta_0}$. On each square $\La_r$, the additional cost is $\O(|\bbX \cap \La_r|)$, and thus using Markov's inequality again, we can ensure that:
\begin{equation}
\label{couttPig}
\Proba \left[ \iint_{\Lambda_{2n}\times \R^2} |x-y|^2 \tPig(\dd x,\dd y ; \bbX) \leq n^{2+c} \right] = 1 - o_n(1).
\end{equation} 
 We now appeal to the $L^{\infty}$ estimate of \cite[Lemma 3.1]{Michael_GOLDMAN_2020}, which ensures that:
\begin{equation}
\label{Linfinty}
\sup_{(x,y)\in\mbox{supp}(\tPig(\cdot, \cdot ; \bbX)), x \in \B_{\frac{3}{4} \cdot 2n} \text{ or } y\in \B_{\frac{3}{4} \cdot 2n}} |x-y| \leq C n \left( \frac{1}{n^4} \iint_{\Lambda_{2n}\times \R^2} |x-y|^2 \tPig(\dd x, \dd y ; \bbX)) \right)^{\frac{1}{4}} 
\end{equation}
{(Note that \cite[Lemma 3.1]{Michael_GOLDMAN_2020} is written in terms of transport maps, whereas we use the language of couplings which is due to the density assumptions on the marginal measure the same. Indeed, our $\tPig(\cdot, \cdot ; \bbX)$ is the pushforward of either marginal via a map $T$ (resp.\ its inverse) onto the other marginal by the classical Brenier result.)}
Note that, at least for $n$ large enough, $n^{1-c} \geq C n \times \left( \frac{1}{n^4} n^{2+c} \right)^{\frac{1}{4}}$. Combining \eqref{couttPig} and \eqref{Linfinty} we deduce that with probability $1 - o_n(1)$, there is no couple $(x,y)$ in the support of $\tPig(\cdot, \cdot ; \bbX)$ such that $x \in \B_n$ or $y\in \B_n$, and with $|x-y| \geq n^{1-c}$. In particular, it means that all the Lebesgue mass on $\La_{n-n^{1-c}}$ is transported onto (spread-out) points in $\La_n$, and thus the measure $\mm_0$ defined above coincides with the Lebesgue measure on $\La_{n-n^{1-c}}$. This proves that the last screening condition \eqref{secondSC} is satisfied with probability $1 - o_n(1)$.

\paragraph{Construction of the approximate process in large boxes.}
If $\bX \in \Omega_n$ we can apply the “screening in Wasserstein space” and obtain a new configuration which we denote by $\tbX$. 

\corT{We define a new point configuration $\btbX_n$ by putting $\btbX_n=\tbX$ on $\Omega_n$ and $\btbX_n=\sum_{x\in \Lambda_n\cap \mathbb Z^2} \delta_x$ on $\Omega_n^c$. The choice on $\Omega_n^c$ is arbitrary as long as it satisfy item 2 and has finite energy as in \eqref{W2tPpndeuxieme} }
By design, the configurations $\tbX$ have exactly $n^\d$ points in $\La_n$ and coincide with the original configuration in $\La_{n - \epsilon_n n}$. This proves the second and third items. Moreover, \corT{the configurations $\btbX_n$} satisfy the density bound \eqref{eq:densitytbX} and the Wasserstein bound \eqref{WassTbX}, which together imply, in view of Corollary \ref{coro:pointprocessWassH1}, the existence of a screened electric field $\tEl$ compatible with $\tbX$ in $\La_n$ and such that:
\begin{equation*}
\frac{1}{n^\d} \int_{\La_n} |\tEl_{\eta_0}|^2 \leq C \left( \brho  \frac{1}{n^\d} \iint |x-y|^2 \Pi'(dx,dy; \bbX_{\eta_0}) + \frac{\brho}{n^\d} |\bbX\cap\Lambda_n| + 1 \right) 
\end{equation*}
In particular, we obtain the existence of a random local electric field $\btEl$ compatible with $\btbX_n$ in $\La_n$ and screened almost surely, such that:
\begin{equation*}
\frac{1}{n^\d} \E \left[ \int_{\La_n} |\btEl_{\eta_0}|^2 \right] \leq \frac{C \brho}{n^\d} \left( \E\left[ \iint_{\La_n \times \La_n} |x-y|^2 \Pi'(dx,dy; \bbX_{\eta_0})\right] +1 \right) 
\leq C \brho \left( \Ww_2^2(\bbX_{\eta_0},\Leb) + 1 \right).
\end{equation*}
Together with the triangular inequality $\Ww_2(\bbX_{\eta_0},\Leb)\leq \eta_0 + \Ww_2(\bbX,\Leb)$ it implies that:
\begin{equation*}
\frac{1}{n^\d} \E \left[ \int_{\La_n} |\btEl_{\eta_0}|^2 \right]  
\leq  C''  \brho  \left(  \Ww^2_2(\bbX, \Leb)  + 1 \right),
\end{equation*}
which proves \eqref{W2tPpndeuxieme}. 
\end{proof}

\newcommand{\bElglob}{\El^{\bullet, \mathrm{glob}}}
\paragraph{Step 2. Forming a stationary approximation and conclusion}
For $n \geq 1$, let $\btbX_n$ be given by Lemma \ref{lem:applyscreening2}. We use it to form a stationary point process $\bbXglob_n$ exactly as in Step 2 of the proof of Proposition~\ref{prop:approximate}. Since the screened electric fields $\btEl$ can be glued together, we may construct a stationary electric field $\bElglob$ compatible with $\bbXglob_n$ and such that:
\begin{equation*}
\Coul_{\eta_0}(\bbXglob_n) \leq \frac{1}{n^\d} \E \left[ \int_{\La_n} |\bElglob_{\eta_0}|^2 \right] \leq C \brho  \left( \Ww^2_2(\bbX, \Leb)  + 1 \right),
\end{equation*}
which concludes the proof of Proposition \ref{prop:approximate2} \corT{using the last item of Lemma \ref{lem:twoproperties}}.
\end{proof}

\section{\texorpdfstring{$\SHU \iff \EU$}{HU-* <=> Coul}: proof of Theorem \ref{theo:SHUEU}}
\label{sec:HUSSWYCoul}
\label{sec:Sodin}
\newcommand{\JU}{\mathrm{J}_1}
\newcommand{\hPsi}{\widehat{\Psi}}
\newcommand{\Ee}{\bar{\mathrm{E}}}
In this section, we prove the equivalence (in dimension $2$) between the quantitative hyperuniformity $\SHU$ introduced in Definition \ref{def:SHU} and finiteness of the regularized Coulomb energy. We rely on the results of the recent paper \cite{sodin2023random}, which is concerned with the following question (with a different terminology): given a stationary point process $\bbX$, when does there exist a compatible stationary electric field $\bEl$? Without any requirement on the field's regularity, the answer is: always, but this ceases to be true if one asks e.g.\ for $\E[|\bEl(0)|]$ to be finite. The main result of \cite{sodin2023random} expresses the equivalence between three conditions:
\begin{enumerate}
    \item The finitess of a certain integral against the spectral measure of $\bbX$, which we recall below in \eqref{def:SC}.
    \item Existence of a compatible stationary electric field with some regularity.
    \item Convergence in $L^2$ of the following random sum: 
    \begin{equation*}
   \Ee_R := \sum_{x \in \bX, 1 \leq |x| \leq R} \nabla \log(x).
    \end{equation*}
 \end{enumerate} 
 We will here use the equivalence between 1. and 2. but let us briefly comment on the third condition. If one tries to prove that $\bbX$ has finite (regularized) Coulomb energy, a first task would be to construct \emph{some} electric field compatible with $\bbX$. The most natural thing to do would be to define $\El(z)$ ($z \in \R^2$) as the (hypothetical) limit of:
 \begin{equation*}
    \Ee'_R(z) := \int_{x \in \R^2, |x| \leq R} - \nabla \log|z-x| \dd \left(\bbX - \Leb \right)(x).
 \end{equation*}
Note that $\Ee'_R(0)$ and $\Ee_R$ coincide because the contribution of Lebesgue in $\Ee'_R(0)$ vanishes by rotational symmetry. It is unclear that $\Ee'_R(z)$ or $\Ee_R$ has a limit as $R \to \infty$, in any case one could estimate the size of $\Ee'_R(z)$ by splitting the domain of integration into smaller regions, applying a Taylor's expansion on each one, and using “discrepancy estimates” i.e.\ the size of $|\bX \cap \Omega| - |\Omega|$ in each region $\Omega$ to control first order terms. Thus it is plausible that “good hyperuniformity” and “existence of good electric fields” are related properties.

\subsection{Preliminaries on the spectral measure}
A point process $\bbX$ is said have “finite second moment” when $\E[|\bbX \cap \BR|^2]$ is finite for all $r > 0$. The spectral measure of a stationary point process $\bbX$ (with finite second moment) is a locally finite measure $\rho$ (on $\Rd$, here $\d = 2$) such that for all compactly supported smooth functions $\varphi, \psi$
\begin{equation*}
\Cov\left( \sum_{x \in \bX} \varphi(x), \sum_{x \in \bX} \psi(x) \right) = \int_{\omega \in \R^2} \hat{\varphi}(\omega) \hat{\psi}(\omega) \dd \rho(\omega),
\end{equation*}
where $\hat{\varphi}(\omega) := \int_{\R^2} e^{-2i \pi \omega \cdot x}\varphi(x) \dd x$ is the Fourier transform of $\varphi$.

One can think of $\rho$ as the analogue of the “spectral measures” for second-order stationary processes indexed by $\mathbb{Z}$, which are probability measures on $\mathbb{S}^1$ (this is in fact not just an analogy). We refer to \cite[Sec.\ 2.1]{sodin2023random} and the references therein for more details. It is recalled there that $\rho$ is always “translation-bounded”, a fact for which we will need a quantitative version, expressed in the next lemma.
\begin{lemma}
\label{lem:TranslationBounded}
We have: 
\begin{equation}
\label{eq:TranslationBounded}
\sup_{a \in \R^2} \rho(\B_1(a)) \leq C \E\left[|\bbX \cap \B_1|^2\right].
\end{equation}
\end{lemma}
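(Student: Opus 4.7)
The plan is to exploit the defining identity of the spectral measure, $\Var(\sum_x \varphi(x)) = \int |\hat\varphi|^2 \dd\rho$, for a real-valued test function whose Fourier transform is bounded below on a small ball centered at a prescribed frequency. I fix a smooth real non-negative bump $\varphi : \R^2 \to \R$ with $\supp \varphi \subset \B_1(0)$ and $\int \varphi > 0$. Since $\hat\varphi$ is continuous and $\hat\varphi(0) = \int \varphi > 0$, there exist $\delta > 0$ and $c_0 > 0$ depending only on this choice such that $|\hat\varphi(\omega)|^2 \geq c_0$ on $\B_\delta(0)$. To pick up spectral mass near a general frequency $a \in \R^2$, I modulate: set $\varphi^c_a(x) := \cos(2\pi a \cdot x) \varphi(x)$ and $\varphi^s_a(x) := \sin(2\pi a \cdot x) \varphi(x)$, both real-valued, supported in $\B_1(0)$, pointwise bounded by $\|\varphi\|_\infty$, and whose Fourier transforms satisfy
\begin{equation*}
|\hat{\varphi^c_a}(\omega)|^2 + |\hat{\varphi^s_a}(\omega)|^2 = \tfrac{1}{2}\bigl(|\hat\varphi(\omega - a)|^2 + |\hat\varphi(\omega + a)|^2\bigr).
\end{equation*}

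The support condition gives the crude pointwise bound $\bigl|\sum_x \varphi^{c/s}_a(x)\bigr| \leq \|\varphi\|_\infty \cdot |\bbX \cap \B_1|$, so each of the variances of these two real-valued linear statistics is at most $\|\varphi\|_\infty^2\, \E[|\bbX \cap \B_1|^2]$. Applying the spectral identity to $\varphi^c_a$ and $\varphi^s_a$ and summing, I obtain
\begin{equation*}
\int_{\R^2} \bigl(|\hat\varphi(\omega - a)|^2 + |\hat\varphi(\omega + a)|^2\bigr) \dd\rho(\omega) \leq 4\|\varphi\|_\infty^2\, \E[|\bbX \cap \B_1|^2].
\end{equation*}
Dropping the non-negative second term, restricting the first to $\omega \in \B_\delta(a)$, and using $|\hat\varphi(\omega - a)|^2 \geq c_0$ there gives $\rho(\B_\delta(a)) \leq C_1\, \E[|\bbX \cap \B_1|^2]$ with $C_1$ depending only on the fixed $\varphi$.

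To upgrade from $\B_\delta(a)$ to $\B_1(a)$, I cover the latter by $N = \O(\delta^{-2})$ translates $\B_\delta(a + y_j)$ and apply the previous estimate at each frequency $a + y_j$ in turn; summing yields $\rho(\B_1(a)) \leq N C_1\, \E[|\bbX \cap \B_1|^2]$ uniformly in $a$. The only subtlety worth flagging is matching the paper's covariance formula with one involving $|\hat\varphi|^2$: I use real-valued cosine/sine modulations (rather than the complex modulation $e^{-2\pi i a \cdot x}\varphi$) precisely so that the spectral identity is used in its uncontroversial form for real test functions. Everything else is routine.
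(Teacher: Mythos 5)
Your proof is correct and follows essentially the same route as the paper's: a fixed bump supported in $\B_1$ whose Fourier transform is bounded below near the origin, a modulation to shift the frequency to $a$, the trivial variance bound $\Var \leq \|\varphi\|_\infty^2\,\E[|\bbX\cap\B_1|^2]$ coming from the support condition, and a finite covering of $\B_1(a)$ by small balls. If anything, your use of real cosine/sine modulations is slightly more careful than the paper's write-up, which applies the spectral identity at the shifted frequency without spelling out the modulated test function.
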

\begin{proof}
Let $\Psi := \chi \ast \chi$, where $\chi$ is a smooth bump function near $0$ as above in Section \ref{sec:CoulombEnergy}, in particular it is radially symmetric, supported in $B_1$ and has mass 1. We have $\hPsi(0) = 1$ and $\hPsi$ is smooth, so there exists $\epsilon > 0$ such that $\hPsi \geq \hal$ on $\B_\epsilon$. In particular, we have:
\begin{equation*}
\rho\left(\B_{\epsilon}(a)\right) = \int_{\omega \in \R^2} \1_{\B_\epsilon(a)}(\omega) \dd \rho(\omega) \leq C \int_{\omega \in \R^2} |\hat{\chi}|^2(\omega-a) \dd \rho(\omega) = C \Var\left[ \sum_{x \in \bX} \chi(x) \right] 
\leq C \E\left[|\bbX \cap \B_1|^2\right].
\end{equation*}
We have thus bounded $\rho(\B_\epsilon(a))$ independently of $a$, and since one can cover the unit ball by a finite amount of balls of radius $\epsilon$ we easily deduce \eqref{eq:TranslationBounded}.
\end{proof}
\corT{We will need below a control on the integral of $\omega \mapsto \frac{1}{|\omega|^3}$ against $\rho$. Paving the plane by balls and using \eqref{eq:TranslationBounded} on each ball, we can bound the integral by a sum over balls (which makes a convergent series appear) and get:}
\begin{corollary}
\begin{equation}
\label{eq:conseqTB}
\int_{\omega \in \R^2, |\omega| \geq 1} \frac{1}{|\omega|^3} \dd \rho(\omega) \leq  C \E\left[|\bbX \cap \B_1|^2\right].
\end{equation}
\end{corollary}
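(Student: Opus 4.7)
The plan is to perform a dyadic decomposition of the region $\{|\omega| \geq 1\}$ into annuli $A_n := \{\omega \in \R^2 : 2^n \leq |\omega| < 2^{n+1}\}$ for $n \geq 0$. On each annulus, the integrand $\frac{1}{|\omega|^3}$ is pointwise bounded by $2^{-3n}$, so it suffices to estimate $\rho(A_n)$ and then sum a geometric series.

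To bound $\rho(A_n)$, I would cover $A_n$ by unit balls. Since $A_n$ has area of order $2^{2n}$, a standard covering argument yields a family of $O(2^{2n})$ unit balls $\B_1(a_j)$ whose union covers $A_n$. Applying Lemma~\ref{lem:TranslationBounded} to each $\B_1(a_j)$, we get
\begin{equation*}
\rho(A_n) \leq \sum_{j} \rho(\B_1(a_j)) \leq C \, 2^{2n} \, \E\left[|\bbX \cap \B_1|^2\right].
\end{equation*}

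Combining the two estimates, the contribution from $A_n$ is bounded by
\begin{equation*}
\int_{A_n} \frac{1}{|\omega|^3} \, \dd \rho(\omega) \leq 2^{-3n} \times C \, 2^{2n} \, \E\left[|\bbX \cap \B_1|^2\right] = C \, 2^{-n} \, \E\left[|\bbX \cap \B_1|^2\right],
\end{equation*}
and summing over $n \geq 0$ yields a convergent geometric series, giving \eqref{eq:conseqTB}. There is essentially no obstacle here; the argument is a routine consequence of the translation-boundedness recorded in Lemma~\ref{lem:TranslationBounded}, and the choice of the exponent $3$ (strictly larger than the dimension $2$) is precisely what makes the dyadic sum converge.
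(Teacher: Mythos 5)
Your argument is correct and is exactly the intended one: the paper leaves this corollary as an immediate consequence of Lemma~\ref{lem:TranslationBounded}, namely covering dyadic annuli by $O(2^{2n})$ unit balls and summing the resulting geometric series. Nothing to add.
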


\subsection{The spectral condition and \texorpdfstring{$\SHU$}{*-hyperuniformity} are equivalent}

\begin{definition}[The spectral condition $\SC$]
Let $\bbX$ be a stationary point process with finite second moment.
We say that $\bbX$ satisfies the “spectral condition” $\SC$ of \cite{sodin2023random} when:
\begin{equation}
\label{def:SC}
\int_{0 \leq |\omega| \leq 1} \frac{\dd \rho(\omega)}{|\omega|^2} < + \infty.
\end{equation}
\end{definition}

\begin{lemma}
\label{lem:equivSCSHU}
For a stationary point process with finite second moment, we have $\SC \iff \SHU$. More precisely there exists a constant $C$ such that:
\begin{equation}
\label{eq:SCSHU}
\sum_{n \geq 0} \sigma(2^n) \leq C \int_{0 \leq |\omega| \leq 1} \frac{\dd \rho(\omega)}{|\omega|^2}, \quad \int_{0 \leq |\omega| \leq 1} \frac{\dd \rho(\omega)}{|\omega|^2} \leq C \sum_{n \geq 0} \sigma(2^n).
\end{equation}
\end{lemma}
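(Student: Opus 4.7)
The plan is to relate $\sigma(r)$ directly to the spectral measure $\rho$ and then trade summation over dyadic radii for integration against $\dd\rho$. Choosing $\varphi=\psi=\1_{\B_r}$ in the defining property of $\rho$ (justified by approximation) gives
\begin{equation*}
\sigma(r) = \frac{1}{|\B_r|}\int_{\R^2}\bigl|\widehat{\1_{\B_r}}(\omega)\bigr|^2\,\dd\rho(\omega).
\end{equation*}
In dimension two, up to absolute constants $\widehat{\1_{\B_r}}(\omega)=r|\omega|^{-1}J_1(2\pi r|\omega|)$, which yields the elementary bounds $|\widehat{\1_{\B_r}}(\omega)|^2\leq C\min(r^4,\,r/|\omega|^3)$ globally and $|\widehat{\1_{\B_r}}(\omega)|^2\geq c\,r^4$ on the small ball $\{|\omega|\leq\epsilon/r\}$, for some absolute $c,C,\epsilon>0$.

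For the forward direction $\SC\Rightarrow\SHU$, I would plug the upper bound into the spectral formula with $r=2^n$ and exchange sum and integral:
\begin{equation*}
\sum_{n\geq 0}\sigma(2^n) \leq C\int_{\R^2}\Biggl(\sum_{n\geq 0}\min\!\bigl(2^{2n},\,2^{-n}|\omega|^{-3}\bigr)\Biggr)\dd\rho(\omega).
\end{equation*}
For each fixed $\omega$, the inner sum is a truncated geometric series peaking at $2^n\approx|\omega|^{-1}$ and evaluates to $O(|\omega|^{-2})$ when $|\omega|\leq 1$ and to $O(|\omega|^{-3})$ when $|\omega|>1$. Splitting the outer integral at $|\omega|=1$, the low-frequency piece is exactly the integral defining $\SC$, while the high-frequency piece is controlled by \eqref{eq:conseqTB}, which together give the first inequality in \eqref{eq:SCSHU}.

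For the reverse direction, using only the lower bound $|\widehat{\1_{\B_r}}|^2\geq c\,r^4\,\1_{|\omega|\leq\epsilon/r}$, one gets $\sigma(2^n)\geq c\int_{|\omega|\leq\epsilon 2^{-n}}2^{2n}\,\dd\rho$. Summing over $n\geq 0$ and swapping the order, each $\omega$ with $|\omega|\leq\epsilon$ contributes $\sum_{2^n\leq\epsilon/|\omega|}2^{2n}\asymp|\omega|^{-2}$, which yields $\sum_n\sigma(2^n)\geq c\int_{|\omega|\leq\epsilon}|\omega|^{-2}\dd\rho$. The missing annulus $\epsilon\leq|\omega|\leq 1$ and the control of $\E[|\bbX\cap\B_1|^2]$ are absorbed via Lemma~\ref{lem:TranslationBounded}, together with the observation that $\sigma(1)$ already controls $\Var[|\bbX\cap\B_1|]$ while the mean is fixed by the intensity. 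The main obstacle here is the oscillatory behaviour of $\widehat{\1_{\B_r}}$: since $J_1$ has zeros, no pointwise lower bound of order $r/|\omega|^3$ can hold. This forces the argument to restrict to the central ball $|\omega|\leq\epsilon/r$ where the Bessel function stays away from zero, and explains why a buffer on the intermediate-frequency annulus has to be absorbed through translation-boundedness rather than recovered directly from $\sigma(2^n)$.
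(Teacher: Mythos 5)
Your proof is correct and follows essentially the same route as the paper's: the same Bessel-function representation of $\sigma(r)$ via the spectral measure, the same upper bounds $\jr(\omega)\lesssim\min\left(r^2,(r|\omega|^3)^{-1}\right)$ with the sum--integral swap and the split at $|\omega|=1$ for the forward direction, and a lower bound on $\jr$ near $\omega=0$ for the converse. The only cosmetic difference is that the paper works with the dyadic annuli $|\omega|\in(2^{-n-1},2^{-n}]$, which tile $(0,1]$ exactly and so avoid your extra step of patching the annulus $\epsilon\leq|\omega|\leq1$ through translation-boundedness.
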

\begin{proof}[Proof of Lemma \ref{lem:equivSCSHU}]
Recall that the rescaled number variance can be expressed in Fourier space as:
\begin{equation}
\label{sigmainFourier}
\sigma(r) = \int_{\omega \in \R^2} \jr(\omega) \dd \rho(\omega), \quad \text{where } \jr(\omega) = \frac{1}{|\omega|^2} \JU^2(r |\omega|), 
\end{equation}
with $\JU$ a Bessel function of the first kind (see \cite[Sec. 1]{coste2021order}). The following facts about $\jr$  are relevant for us:
\begin{claim}
\label{claim:jr}
There exists some constants $C, c$ such that:
\begin{enumerate}
   \item For $|\omega| \geq r^{-1}$, we have $\jr(\omega) \leq C \frac{1}{r|\omega|^3}$.
   \item For $|\omega| \leq r^{-1}$, we have $c r^2 \leq \jr(\omega) \leq C r^2$.
\end{enumerate}
\end{claim}
\begin{proof}
The first item follows from a classical decay estimate for Bessel functions, namely that $|\JU|(t) \leq \frac{C}{\sqrt{t}}$ for $|t| \geq 1$. The second estimate follows from the combination of:
\begin{enumerate}
   \item The asymptotics $\JU(x) \sim \corT{x}$ as $x \to 0^+$.
   \item The fact that $\JU(x)$ does not vanish for $x \leq 1$.
\end{enumerate}
Those two facts ensure the existence of $c, C > 0$ such that $\corT{c |x|^2 \leq \JU^2(|x|) \leq C|x|^2}$ for $|x| \leq 1$. Inserting this into the definition of $\jr$ proves the claim.
\end{proof}

\paragraph{$\SHU$ implies $\SC$.}
Since $\jr$ is non-negative and $\rho$ is a positive measure, for all $r > 0$ we can write:
\begin{multline*} 
\sigma(r) = \int_{\omega \in \R^2} \jr(\omega) \dd \rho(\omega) \geq \int_{\omega \in \R^2, |\omega| \in (\hal r^{-1}, r^{-1}]} \jr(\omega) \dd \rho(\omega) \geq c r^2 \int_{\omega \in \R^2, |\omega| \in (\hal r^{-1}, r^{-1}]} \dd \rho(\omega) \\
\geq \frac{c}{4} \int_{\omega \in \R^2, |\omega| \in (\hal r^{-1}, r^{-1}]} \frac{1}{|\omega|^2} \dd \rho(\omega),
\end{multline*}
using Claim \ref{claim:jr} (second item) in the previous-to-last inequality. Taking $r = 2^{n}$ and summing over $n \geq 0$, we obtain:
\begin{equation*}
\sum_{n = 0}^{+ \infty} \sigma(2^n) \geq \frac{c}{4} \sum_{n =0}^{+ \infty} \int_{\omega \in \R^2, |\omega| \in (2^{-n-1}, 2^{-n}]} \frac{1}{|\omega|^2} \dd \rho(\omega) =  \frac{c}{4} \int_{0 < |\omega| \leq 1}  \frac{1}{|\omega|^2} \dd \rho(\omega)
\end{equation*}
This gives the right-hand inequality in \eqref{eq:SCSHU} (hyperuniformity, or the mere boundedness of the rescaled number variance (or even less), ensures that $\rho$ has no atom at $0$, see \cite[Sec. 2.3]{sodin2023random}).

\paragraph{$\SC$ implies $\SHU$.} Conversely, let us start again from \eqref{sigmainFourier} and decompose $\sigma(r)$, using Claim \ref{claim:jr} as:
\begin{multline}
\label{SWYdeco}
\sigma(r) = \int_{\omega \in \R^2, |\omega| \leq r^{-1}} \jr(\omega) \dd \rho(\omega) + \int_{\omega \in \R^2, r^{-1} \leq |\omega| \leq 1} \jr(\omega) \dd \rho(\omega) + \int_{\omega \in \R^2, 1 \leq |\omega|} \jr(\omega) \dd \rho(\omega) \\
\leq C r^2 \int_{\omega \in \R^2, |\omega| \leq r^{-1}}  \dd \rho(\omega) + C \int_{\omega \in \R^2, r^{-1} \leq |\omega| \leq 1} \frac{1}{r |\omega|^3} \dd \rho(\omega) + C \frac{1}{r} \int_{\omega \in \R^2, 1 \leq |\omega|} \frac{1}{|\omega|^3} \dd \rho(\omega).
\end{multline}
Taking $r = 2^n$ and summing over $n \geq 0$, we obtain:
\begin{itemize}
   \item On the one hand:
   \begin{multline}
   \label{SWY1}
\sum_{n = 0}^{+ \infty} 2^{2n} \int_{\omega \in \R^2, |\omega| \leq 2^{-n}}  \dd \rho(\omega) \leq  \int_{\omega \in \R^2, |\omega| \leq 1} \left( \sum_{n \leq - \log_2 |\omega|} 2^{2n} \right) \dd \rho(\omega) \leq \int_{\omega \in \R^2, |\omega| \leq 1} \frac{1}{|\omega|^2} \dd \rho(\omega).
   \end{multline}
   \item On the other hand:
   \begin{multline}
   \label{SWY2}
\sum_{n = 0}^{+ \infty} \int_{\omega \in \R^2, 2^{-n} \leq |\omega| \leq 1} \frac{1}{2^n |\omega|^3} \dd \rho(\omega) \leq C \int_{\omega \in \R^2, 0 < |\omega| \leq 1}  \left( \sum_{n \geq - \log_2 |\omega|} 2^{-n} \right) \frac{1}{|\omega|^3} \dd \rho(\omega) \\
\leq C \int_{\omega \in \R^2, 0 < |\omega| \leq 1} \frac{1}{|\omega|^2} \dd \rho(\omega).
   \end{multline}
   \item Finally: 
   \begin{equation}
   \label{SWY3}
\sum_{n = 0}^{+ \infty} \frac{1}{2^{n}} \int_{\omega \in \R^2, 1 \leq |\omega|} \frac{1}{|\omega|^3} \dd \rho(\omega) \leq C \int_{\omega \in \R^2, 1 \leq |\omega|} \frac{1}{|\omega|^3} \dd \rho(\omega),
   \end{equation}
    and this last integral is always finite, see \eqref{eq:conseqTB}.
\end{itemize}
Combining \eqref{SWYdeco}, \eqref{SWY1}, \eqref{SWY2} and \eqref{SWY3} yields the left-hand inequality in \eqref{eq:SCSHU}.
\end{proof}

\subsection{The spectral condition implies finite regularized Coulomb energy}
\newcommand{\rhoEl}{\rho_{\bEl}}
\begin{proposition}
\label{prop:SCCoul}
Let $\bbX$ be a stationary point process with finite second moment such that $\SC$ holds. Then we have:
\begin{equation}
\label{eq:SCCoul}
\Coul_1(\bbX) \leq C  \int_{0 \leq |\omega| \leq 1} \frac{\dd \rho(\omega)}{|\omega|^2} + \E\left[|\bbX \cap \B_1|^2\right]. 
\end{equation}
\end{proposition}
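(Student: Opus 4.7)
I would construct an explicit stationary compatible electric field by Fourier methods and then bound the $L^2$-norm of its $1$-truncation by integration against the spectral measure $\rho$. The natural kernel is $h := \nabla \tilde{\g}_1$, where $\tilde{\g}_1 := \g + \mathsf{f}_1$ is the truncated Coulomb potential satisfying $-\Delta \tilde{\g}_1 = \cd\, \delta_0^{(1)}$. A direct computation yields in Fourier space
\[
\widehat{h}(\omega) = \frac{\cd\, i \omega}{2\pi |\omega|^2}\, \widehat{\chi_1}(\omega),
\qquad
|\widehat{h}(\omega)|^2 = \frac{\cd^{2}}{4\pi^{2}} \frac{|\widehat{\chi_1}(\omega)|^2}{|\omega|^2},
\]
where $|\widehat{\chi_1}| \leq 1$ everywhere and $\widehat{\chi_1}$ decays at infinity — at least $|\widehat{\chi_1}(\omega)|^2 \lesssim |\omega|^{-3}$ even in the non-smooth case $\chi = \1_{\B_1}$.

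\textbf{Construction of the field.} Under $\SC$, the function $|\widehat{h}|^2$ will turn out to belong to $L^1(d\rho)$ (verified below), so the classical spectral identity $\Var\bigl[\sum_{x \in \bX} \varphi(x)\bigr] = \int |\widehat{\varphi}|^2 d\rho$, applied componentwise to compactly-supported truncations of $h(y-\cdot)$, allows one to define
\[
\bEl_1(y) := \int h(y-x)\, \dd(\bbX - \Leb)(x)
\]
as an $L^2(\Proba)$-limit. The resulting field is stationary by translation invariance, and the identity $-\div(\bEl_1) = \cd\, (\bbX_1 - \Leb)$ holds distributionally since it does at the level of each compactly-supported approximation. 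To confirm that $\bEl_1$ is the $1$-truncation of a global stationary field $\bEl$ compatible with $\bbX$ — so that it is admissible in the infimum \eqref{def:Couleta} defining $\Coul_1(\bbX)$ — I would invoke the equivalence proven in \cite{sodin2023random} between $\SC$ and the existence of such a field.

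\textbf{Spectral estimate.} The spectral identity then gives
\[
\E\bigl[|\bEl_1(0)|^2\bigr] = \int_{\R^2} |\widehat{h}(\omega)|^2\, d\rho(\omega) = \frac{\cd^{2}}{4\pi^{2}} \int_{\R^2} \frac{|\widehat{\chi_1}(\omega)|^2}{|\omega|^2}\, d\rho(\omega).
\]
Split the integral at $|\omega| = 1$. For $|\omega| \leq 1$, using $|\widehat{\chi_1}| \leq 1$,
\[
\int_{|\omega| \leq 1} \frac{|\widehat{\chi_1}|^2}{|\omega|^2}\, d\rho \leq \int_{|\omega| \leq 1} \frac{d\rho(\omega)}{|\omega|^2},
\]
which is precisely the $\SC$ contribution to \eqref{eq:SCCoul}. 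For $|\omega| > 1$, the translation-boundedness of $\rho$ (Lemma \ref{lem:TranslationBounded}) gives $\rho(\B_R) \leq C R^2\, \E[|\bbX \cap \B_1|^2]$ for $R \geq 1$, while $|\widehat{\chi_1}(\omega)|^2 \leq C |\omega|^{-3}$. Decomposing dyadically,
\[
\int_{|\omega| > 1} \frac{|\widehat{\chi_1}|^2}{|\omega|^2}\, d\rho \leq C \sum_{k \geq 0} \frac{\rho(\B_{2^{k+1}})}{2^{5k}} \leq C\, \E[|\bbX \cap \B_1|^2].
\]
Summing both halves and passing to the infimum over admissible compatible fields yields \eqref{eq:SCCoul}.

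\textbf{Main obstacle.} The delicate step is not the spectral bookkeeping but the rigorous identification of the $L^2$-constructed $\bEl_1$ with the $1$-truncation of a genuine global stationary field compatible with $\bbX$: one must check that the distributional identity $-\div \bEl_1 = \cd(\bbX_1 - \Leb)$ survives the $L^2$ limit, and that an underlying untruncated compatible field really exists. This is exactly the content that \cite{sodin2023random} supplies under $\SC$, and we use it as a black box.
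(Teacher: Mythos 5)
Your proposal is correct and follows essentially the same route as the paper: both rely on \cite{sodin2023random} to supply the stationary compatible field under $\SC$, compute the second moment of its regularization via the spectral measure (your kernel $\widehat{h}$ gives exactly the spectral density $|\hat\chi|^2\,\dd\rho(\omega)/|\omega|^2$ that the paper obtains by mollifying the abstract Sodin--Wennman--Yakir field with $\chi$), split at $|\omega|=1$, and control the high-frequency part by translation-boundedness of $\rho$ as in Lemma~\ref{lem:TranslationBounded} and \eqref{eq:conseqTB}. The "main obstacle" you flag is resolved in the paper exactly as you suggest, by taking the existence of the compatible stationary field from \cite[Sec.~5]{sodin2023random} as a black box and defining the untruncated field as the mollified field minus $\sum_{x}\nabla\mathsf{f}_1(x-\cdot)$.
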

\begin{proof}[Proof of Proposition \ref{prop:SCCoul}]
From \cite[Sec. 5]{sodin2023random}, we know the following:
\begin{enumerate}
   \item If $\SC$ holds, then there exists a stationary vector field (denoted by $\mathcal{V}$ in \cite{sodin2023random}, but which we denote by $\bEl$ for consistency) which is compatible with $\bbX$. \corT{The authors of \cite{sodin2023random} do not use the term “compatible”, but their main concern (see \cite[Question 1]{sodin2023random}) is to find a “stationary vector field $V_\Lambda$ such that $\div V_\Lambda = n_\Lambda - c_\Lambda m$”, where:
   \begin{itemize}
      \item $n_\Lambda$ is the purely atomic measure associated to a point process $\Lambda$
      \item $c_\Lambda$ is the intensity of the point process (in our case, $c_\Lambda$ is always $1$).
      \item $m$ is the Lebesgue measure,
   \end{itemize}
   so their question is indeed equivalent to solving $\div V_\Lambda = n_\Lambda - \Leb$, which up to a multiplicative constant and a change of notation is the same as solving the compatibility condition \eqref{def:compatible}.
   }

   \item This vector field has a spectral measure $\rhoEl$ which is given by $\dd \rhoEl(\omega) = \frac{\dd \rho(\omega)}{|\omega|^2}$. 
\corT{Heuristically}, it means that, for all $x \in \Rd$ we have:
\begin{equation}
\label{eq:El0Elx}
\E\left[\bEl(0) \cdot \bEl(x)\right] = \int_{\omega \in \R^2} e^{-i 2\pi x \cdot \omega} \frac{\dd \rho(\omega)}{|\omega|^2}. 
\end{equation}
\end{enumerate}
Note that taking $x = 0$ in \eqref{eq:El0Elx} \corT{would} yield:
\begin{equation*}
\E\left[|\bEl(0)|^2\right] = \int_{\omega \in \R^2} \frac{\dd \rho(\omega)}{|\omega|^2},
\end{equation*} 
however this integral might be infinite despite the spectral condition $\SC$, as nothing guarantees that $\int_{\omega \in \R^2, |\omega| \geq 1} \frac{\dd \rho(\omega)}{|\omega|^2} < + \infty$. Such a high-frequency integrability is related to the short-distance properties of the point process, which we now discard by applying a regularization.

Let $\chi$ be the radially symmetric bump function used for regularization in Section \ref{sec:CoulombEnergy}, and let $\bhEl := \chi \ast \bEl$. We have:
\begin{equation*}
\E\left[|\bhEl(0)|^2\right] = \E\left[|\chi \ast \bEl(0)|^2\right] = \E\left[ \iint_{u,v \in \B_{1}} \chi(u) \chi(v) \bEl(u) \cdot \bEl(v) \dd u \dd v \right] = \int_{\omega \in \R^2} |\hat{\chi}|^2(\omega) \frac{\dd \rho(\omega)}{|\omega|^2},
\end{equation*}
\corT{this last equality being rigorously derived in \cite[Lemma 4.1]{sodin2023random2}, \cite[Prop. 5.9]{sodin2023random}.}

Now, since $\chi$ is smooth and compactly supported, its Fourier transform is bounded near $0$ \emph{and} decays \corT{faster than any polynomial} at infinity. We thus obtain:
\begin{equation*}
\E\left[|\bhEl(0)|^2\right] \leq C \int_{\omega \in \R^2, |\omega| \leq 1} \frac{\dd \rho(\omega)}{|\omega|^2} + C \int_{\omega \in \R^2, 1 \leq |\omega|} \frac{\dd \rho(\omega)}{|\omega|^3},
\end{equation*}
and using \eqref{eq:conseqTB} to bound the last integral yields:
\begin{equation}
\label{bhEl}
\E\left[|\bhEl(0)|^2\right] \leq C \int_{\omega \in \R^2, 0 \leq |\omega| \leq 1} \frac{\dd \rho(\omega)}{|\omega|^2} + C \E\left[|\bbX \cap \B_1|^2\right].
\end{equation} 
Now set $\bEl := \bhEl - \sum_{x \in \bbX} \mathsf{f}_1(\cdot - x)$ with $\mathsf{f}$ as in \eqref{def:feta}. Then by construction $\bhEl$ coincides with the truncation $\bEl_1$ as in \eqref{def:Eleta} with $\eta = 1$. Moreover $\bEl$ is stationary and compatible with $\bbX$. We have thus checked the existence of a stationary electric field $\bEl$ compatible with $\bbX$ such that $\E\left[|\bEl_1(0)|^2\right]$ is finite and bounded by the right-hand side of \eqref{bhEl}, which ensures that the regularized Coulomb energy is finite, and moreover \eqref{eq:SCCoul} holds.
\end{proof}

\subsection{Finite regularized Coulomb energy implies the spectral condition}
\label{sec:CoulSC}
We will use \cite[Thm. 5.4]{sodin2023random}, which we here reformulate in our language.
\begin{lemma}
\label{lem:SodinEnough}
Let $\bbX$ be a stationary point process with finite second moment, and let $\bEl$ be a random stationary electric field satisfying the following properties:
\begin{enumerate}
   \item It is compatible with $\bbX$.
   \item For all smooth test functions $\varphi_1, \varphi_2$ compactly supported on $\R^2$, letting $\varphi = (\varphi_1, \varphi_2)$, we have:
   \begin{equation}
   \label{eq:finiteCondition}
   \E\left[  \left| \int_{\R^2} \bEl \cdot \varphi \right|^2 \right] < + \infty.
   \end{equation}
\end{enumerate}
Then the spectral condition $\SC$ holds.
\end{lemma}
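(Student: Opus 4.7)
The plan is to extract a $2\times 2$ matrix-valued spectral measure $\rho_{\bEl}$ for the stationary field $\bEl$ from condition \eqref{eq:finiteCondition}, and then to exploit the compatibility relation $-\div(\bEl)=\cd(\bbX-\Leb)$, which translates in Fourier into an algebraic identity between $\rho_{\bEl}$ and $\rho$. The key heuristic is that $2\pi i\, \omega \cdot \widehat{\bEl}(\omega) = -\cd\, \widehat{(\bbX-\Leb)}(\omega)$ yields the measure identity $4\pi^2\, \omega^{\mathsf T} \rho_{\bEl}(d\omega)\,\omega = \cd^2\, d\rho(\omega)$. Since $\omega^{\mathsf T} M \omega \leq |\omega|^2 \mathrm{Tr}(M)$ for any positive semi-definite $M$, one immediately gets $d\rho(\omega)/|\omega|^2 \leq C\, d\mathrm{Tr}(\rho_{\bEl})(\omega)$, and $\SC$ follows from local finiteness of $\rho_{\bEl}$.

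First I would construct $\rho_{\bEl}$. For $\varphi=(\varphi_1,\varphi_2)$ and $\psi=(\psi_1,\psi_2)$ smooth and compactly supported, condition \eqref{eq:finiteCondition} and Cauchy--Schwarz make the sesquilinear form
\begin{equation*}
(\varphi,\psi) \mapsto \E\!\left[\int_{\R^2} \bEl\cdot\varphi \;\overline{\int_{\R^2} \bEl\cdot\psi}\right]
\end{equation*}
well-defined, and the stationarity of $\bEl$ makes it translation-invariant in the diagonal sense. A vector-valued Bochner--Herglotz argument (as recalled in \cite[Sec.~2.1]{sodin2023random}) then produces a positive-Hermitian-matrix-valued Radon measure $\rho_{\bEl}$ on $\R^2$ such that the displayed quantity equals $\int_{\R^2} \hat{\varphi}^{\,*}(\omega)\, \rho_{\bEl}(d\omega)\, \hat{\psi}(\omega)$. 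Local finiteness of $\rho_{\bEl}$ follows from a translation-boundedness argument analogous to the proof of Lemma \ref{lem:TranslationBounded}, applied to $\bEl$ rather than to $\bbX$.

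Next I would implement the compatibility computation. For any scalar test function $\psi \in C^\infty_c(\R^2)$, integration by parts against \eqref{def:compatible} gives $\int_{\R^2} \bEl\cdot\nabla\psi = \cd \int \psi\, d(\bbX-\Leb)$. Taking variance on both sides and applying the two spectral representations yields
\begin{equation*}
4\pi^2\int_{\R^2} |\hat{\psi}(\omega)|^2\; \omega^{\mathsf T}\rho_{\bEl}(d\omega)\,\omega \;=\; \cd^2 \int_{\R^2} |\hat{\psi}(\omega)|^2\, d\rho(\omega)
\end{equation*}
for all such $\psi$. By density of $\{|\hat\psi|^2 : \psi\in C^\infty_c\}$ in $C_c(\R^2;\R_{\geq 0})$ this forces the pointwise measure identity $4\pi^2\, \omega^{\mathsf T}\rho_{\bEl}(d\omega)\,\omega = \cd^2\, d\rho(\omega)$. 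Combining with $\omega^{\mathsf T}\rho_{\bEl}(d\omega)\,\omega \leq |\omega|^2\, d\mathrm{Tr}(\rho_{\bEl})(\omega)$ gives
\begin{equation*}
\int_{|\omega|\leq 1}\frac{d\rho(\omega)}{|\omega|^2} \;\leq\; C\int_{|\omega|\leq 1} d\mathrm{Tr}(\rho_{\bEl})(\omega) \;<\; +\infty,
\end{equation*}
which is precisely $\SC$.

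The main obstacle I anticipate is the clean construction of $\rho_{\bEl}$: one must check from \eqref{eq:finiteCondition} that the associated covariance functional is positive definite and continuous enough on $(C^\infty_c)^2$ for a matrix-valued Bochner--Herglotz representation to apply, and one must justify that $\bEl$, which is only assumed integrable against smooth test functions via \eqref{eq:finiteCondition}, may legitimately be tested against $\nabla\psi$ so that the distributional identity \eqref{def:compatible} produces the scalar equality $\int\bEl\cdot\nabla\psi=\cd\int\psi\, d(\bbX-\Leb)$ almost surely. Once this spectral framework is in place, the rest is algebraic; this is in essence the content of \cite[Thm.~5.4]{sodin2023random}, whose strategy I am paraphrasing here.
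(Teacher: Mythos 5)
The paper does not actually prove this lemma: it is presented as a reformulation of \cite[Thm.~5.4]{sodin2023random} and used as a black box, so there is no in-paper argument to compare against. Your reconstruction follows the same spectral strategy as that reference --- build a matrix-valued spectral measure for the stationary field, translate compatibility into the Fourier-side identity $4\pi^2\,\omega^{\mathsf{T}}\rho_{\bEl}(\dd\omega)\,\omega=\cd^2\,\dd\rho(\omega)$, and bound the quadratic form by the trace --- and this skeleton is sound. Two points need repair before it is a proof. First, the density claim is false as stated: for $\psi\in C^\infty_c$ the function $|\hat{\psi}|^2$ is the restriction of an entire function of exponential type (Paley--Wiener) and is never compactly supported, so $\{|\hat{\psi}|^2\}$ is not dense in $C_c(\R^2;\R_{\geq 0})$. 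The measure identity must instead be obtained by polarization and the standard uniqueness statement for translation-bounded positive (tempered) measures tested against $\hat{\psi}\,\overline{\hat{\phi}}$ with $\psi,\phi\in C^\infty_c$, i.e.\ against Fourier transforms of a dense subspace of Schwartz space; this is routine but it is the actual mechanism. Second, you use the non-centered second moment of $\bEl$, while the paper's $\rho$ is defined through covariances; either center $\bEl$ (its mean is a constant vector by stationarity) or note that the discrepancy only contributes an atom at $\omega=0$, which is annihilated by the factor $\omega^{\mathsf{T}}(\cdot)\,\omega$ and is therefore harmless. The continuity needed for the Bochner--Schwartz representation, which you rightly flag as the main obstacle, does follow from \eqref{eq:finiteCondition} by a closed-graph argument for the linear map $\varphi\mapsto\int_{\R^2}\bEl\cdot\varphi\in L^2(\Proba)$ on the Fr\'echet space of test functions supported in a fixed compact set, using that $\bEl\in L^1_{loc}$ almost surely.
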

From Lemma \ref{lem:SodinEnough} we can deduce that $\EU \implies \SC$, because existence of regularized electric fields in $L^2$ is enough to guarantee \eqref{eq:finiteCondition}, as we now show.
\begin{proposition}
\label{prop:CoulSC}
Assume that $\EU$ holds. Then the spectral condition $\SC$ holds.
\end{proposition}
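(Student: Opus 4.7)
The plan is to invoke Lemma~\ref{lem:SodinEnough}: under $\EU$, the property $\SC$ will follow provided we can exhibit a stationary, compatible electric field $\bEl$ satisfying the finiteness condition \eqref{eq:finiteCondition}. Since $\EU$ already furnishes a stationary compatible $\bEl$ with $\E\bigl[|\bEl_1(0)|^2\bigr]<+\infty$, the task reduces to verifying \eqref{eq:finiteCondition} for this particular $\bEl$.

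The key manipulation is to undo the truncation. From \eqref{def:Eleta} we have $\bEl = \bEl_1 - \sum_{x\in\bbX}\nabla\mathsf{f}_1(x-\cdot)$ (the sign uses the radial symmetry of $\mathsf{f}_1$), so that for any smooth compactly supported test vector field $\varphi$ with $\supp\varphi\subset\B_R$,
\begin{equation*}
\int_{\R^2}\bEl\cdot\varphi \;=\; \int_{\R^2}\bEl_1\cdot\varphi \;-\; \sum_{x\in\bbX}(\nabla\mathsf{f}_1\ast\varphi)(x).
\end{equation*}
The first term is handled by Cauchy-Schwarz and stationarity: we have $\E\bigl[|\int\bEl_1\cdot\varphi|^2\bigr]\leq \|\varphi\|_{L^2}^2\,|\B_R|\,\E[|\bEl_1(0)|^2]<+\infty$.

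For the second term, the crucial input is the borderline integrability of $\nabla\mathsf{f}_1$ in dimension $2$: since $\mathsf{f}_1$ is supported in $\B_1$ with $|\nabla\mathsf{f}_1(x)|\lesssim 1/|x|$ near the origin, we have $\nabla\mathsf{f}_1\in L^1(\R^2)$. By Young's inequality, $\psi:=\nabla\mathsf{f}_1\ast\varphi$ is then bounded and supported in $\B_{R+1}$, so that $\bigl|\sum_{x\in\bbX}\psi(x)\bigr|\leq \|\psi\|_\infty\,|\bbX\cap\B_{R+1}|$. The second moment of this random sum is finite by the first item of Lemma~\ref{lem:twoproperties}, which states precisely that $\Coul_1(\bbX)<+\infty$ yields finite second moments for the number of points of $\bbX$ in any ball. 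Combining the two bounds verifies \eqref{eq:finiteCondition}, and Lemma~\ref{lem:SodinEnough} then delivers $\SC$.

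No substantial obstacle is expected; the only step requiring attention is the decomposition via \eqref{def:Eleta} together with the $L^1$-integrability of $\nabla\mathsf{f}_1$ — which is specific to dimension $2$ and consistent with the whole of Section~\ref{sec:Sodin} being two-dimensional.
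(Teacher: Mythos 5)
Your proof is correct and follows essentially the same route as the paper's: decompose $\bEl$ into $\bEleta$ plus the truncation correction, control the first term by Cauchy--Schwarz and stationarity, and control the second by $\|\nabla \feta\|_{L^1}$ times the local point count, whose second moment is finite by the first item of Lemma~\ref{lem:twoproperties}. The only (harmless) imprecision is that $\EU$ guarantees $\E[|\bEleta(0)|^2]<+\infty$ only for \emph{some} $\eta\in(0,1]$, not necessarily $\eta=1$; your argument goes through verbatim with $\eta$ in place of $1$.
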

\newcommand{\heta}{h^{\sbullet}_{\eta}}
\begin{proof}[Proof of Proposition \ref{prop:CoulSC}]
By Definition \ref{def:RegularizedEnergy}, $\EU$ means that there exists a random stationary  electric field $\bEl$ compatible with $\bbX$ and $\eta \in (0, 1)$ such that $\E\left[|\bEleta(0)|^2\right] < + \infty$. For any test vector field $\varphi$ as in Lemma \ref{lem:SodinEnough}, we write:
\begin{equation*}
\left| \int_{\R^2} \bEl \cdot \varphi \right|^2 \leq C \|\varphi\|^2_{L^{\infty}} \left( \| \bEleta \|^2_{L^1(\BR)}  +  \| \bEl - \bEleta \|^2_{L^1(\BR)} \right),
\end{equation*}
where $r > 0$ is large enough to contain the support of $\varphi$. On the one hand, using Cauchy-Schwarz's inequality and stationarity, we have:
\begin{equation*}
\E \left[ \| \bEleta \|^2_{L^1(\BR)}  \right] \leq |\BR| \times \E\left[ \int_{\BR} |\bEleta|^2  \right] \leq |\BR| \times |\BR|  \times \E\left[|\bEleta(0)|^2\right].
\end{equation*}
On the other hand, we have by definition of the regularization $\bEleta$ (see \eqref{def:Eleta}):
\begin{equation*}
\| \bEl - \bEleta \|_{L^1(\BR)} \leq |\bbX \cap \B_{r+1}| \times |\nabla \feta|_{L^1}, 
\end{equation*}
where $|\nabla \feta|_{L^1}$ is a finite quantity depending on $\eta$ and we thus obtain:
\begin{equation*}
\E \left[ \| \bEl - \bEleta \|^2_{L^1(\BR)}  \right] \leq C_\eta \E\left[  |\bbX \cap \B_{r+1}|^2 \right], 
\end{equation*}
but finiteness of the (regularized) Coulomb energy implies that the point process has finite second moment (see Lemma \ref{lem:twoproperties})
hence that $\E\left[  |\bbX \cap \B_{r+1}|^2 \right]$ is finite for all $r$. Thus \eqref{eq:finiteCondition} holds.
\end{proof}

\section{Construction of counter-examples}
\label{sec:constructions}
\newcommand{\LakL}{\Lambda^{(k)}_N}
\newcommand{\LaL}{\Lambda_N}
\newcommand{\LaUL}{\Lambda^{(1)}_N}
\newcommand{\LaZL}{\Lambda^{(0)}_N}
\newcommand{\pL}{\alpha_N}
\newcommand{\tp}{{\p}_N}
\newcommand{\CkN}{C^{(k)}_N}
\newcommand{\CZN}{C^{(0)}_N}

For $N \geq 10$ we let $\LaL=\LaL^{(0)}$ be the (half-open) cube $\LaL:= [-\hal N, \hal N)^2$, and for all $k \in N\Z^2$ we let $\LakL := k + \LaZL$, which is a cube of sidelength $N$ centered at $k$.

\subsection{\texorpdfstring{$\WD$ does not imply $\EU$}{Wass does not imply Coul}}
\label{WDnotEU}
We will construct a stationary point process as a mixture of “elementary blocks” - themselves stationary point processes, which we now define.

\subsubsection*{Definition of the elementary blocks}
We use here a construction of \cite[\corT{Sec. 4}]{HUPL}. We first define a “displacement field” $\tp : \R^2 \mapsto \R^2$ by setting $\tp(x) := k-x$ for all $k \in N\Z^2$ and all $x$ in $\LakL$. { Thinking about $\tp$ as a device to transport/displace points, i.e.\ the point at $x$ is transported to $x+\tp(x)$,} applying $\tp$ corresponds to “sending all the points of the cube $\LakL$ to its center”. This (deterministic) displacement field is clearly $N\Zd$-{periodic}. Finally, we set:
 \begin{equation*}
\bbX_N := \{\corT{x + \tp(x) + \tau}, \ x \in \Z^2 \},
 \end{equation*} 
\corT{where $\tau$ is a random vector uniformly distributed in $\LaL$. The point process $\bbX_N$ is then stationary.}
By construction, the size of all displacements is bounded by $\sqrt{2} N$, thus $\Pp_N={\mathrm{ Law}}(\bbX_N)$ is at finite distance of the lattice - hence of the Lebesgue measure, and we have:
\begin{equation}
\label{PpNLeb}
\Ww^2_2(\bbX_N, \Leb) \leq C N^2.
\end{equation}

{Note that $\bbX_N$ is not a simple point process. One can easily change this construction to obtain a simple point process by sending the points of $\LakL$ independently to uniform points in a small ball around $k$. The following arguments  only need minor adaptations.  }


\subsubsection*{Lower bound on the Coulomb energy}
\begin{lemma}
\label{lem:toomanyhasacost}
Let $\bX$ be a point configuration and $\El$ a compatible electric field. Assume that for some $z \in \R^2$ and for $M \geq 100$, we have: $|\bX \cap \B_1(z)| \geq M^2$. Then for some $c > 0$ and for all $\eta \in (0,1)$:
\begin{equation}
\label{costtoomany}
\int_{\B_{M}(z)} |\Eleta|^2 \geq c M^4 \log M,
\end{equation}
\end{lemma}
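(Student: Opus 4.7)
The plan is to exploit the divergence-theorem/flux picture: an abnormally large cluster of points forces a large radial flux of the (regularized) field through every sphere around $z$, and this flux cannot be cheap in $L^2$.

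First, I would fix $z$ and record that, since $\El$ is compatible with $\bX$, the regularized field satisfies the distributional identity $-\div(\Eleta) = \cd(\bXeta - \Leb)$ (cf.\ Claim~\ref{claim:propertiesOF}). Because $\bXeta$ has a bounded density with respect to Lebesgue on any ball, $\div(\Eleta) \in L^\infty_{\text{loc}}$ and $\Eleta \in L^2_{\text{loc}}$, so the divergence theorem can be applied on $\B_r(z)$ for (almost) every $r > 0$. This gives
\begin{equation*}
\int_{\partial \B_r(z)} \Eleta \cdot \vec{n}\, d\mathcal{H}^1 \;=\; -\cd\bigl( \bXeta(\B_r(z)) - |\B_r(z)| \bigr).
\end{equation*}

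Next, for any $r \geq 2$ (and since $\eta < 1$), the spread-out mass coming from the $\geq M^2$ points of $\bX \cap \B_1(z)$ remains supported in $\B_{1+\eta}(z) \subset \B_r(z)$. Hence $\bXeta(\B_r(z)) \geq M^2$, and for $r \in [2, M/\sqrt{2\pi}]$ (a nontrivial range because $M \geq 100$) we obtain $\bXeta(\B_r(z)) - \pi r^2 \geq M^2/2$. Combined with Cauchy--Schwarz on the sphere,
\begin{equation*}
\Bigl(\tfrac{\cd M^2}{2}\Bigr)^2 \;\leq\; \Bigl(\int_{\partial \B_r(z)} \Eleta \cdot \vec{n}\Bigr)^2 \;\leq\; 2\pi r \int_{\partial \B_r(z)} |\Eleta|^2 \, d\mathcal{H}^1,
\end{equation*}
so $\int_{\partial \B_r(z)} |\Eleta|^2 \geq c' M^4/r$ for some universal $c' > 0$.

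Finally I would integrate this over $r$ using the coarea formula:
\begin{equation*}
\int_{\B_M(z)} |\Eleta|^2 \;\geq\; \int_{2}^{M/\sqrt{2\pi}} \int_{\partial \B_r(z)} |\Eleta|^2 \, d\mathcal{H}^1\, dr \;\geq\; c' M^4 \int_{2}^{M/\sqrt{2\pi}} \frac{dr}{r} \;\geq\; c\, M^4 \log M
\end{equation*}
for some universal $c > 0$, where the last inequality uses $M \geq 100$ to absorb the constants in the logarithm. This is \eqref{costtoomany}.

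The only potentially delicate point is justifying the divergence theorem for $\Eleta$ on balls of arbitrary radius, but this is routine: the density-bounded right-hand side of $\div(\Eleta) = -\cd(\bXeta - \Leb)$ combined with $\Eleta \in L^2_{\text{loc}}$ gives normal traces on smooth boundaries, and one only needs the identity for a.e.\ $r$, which is exactly what coarea provides. There is no real obstacle; the proof is essentially a two-dimensional logarithmic capacity estimate, transplanted to the electric-field language used throughout the paper.
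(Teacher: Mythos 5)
Your proof is correct and follows essentially the same route as the paper's: a flux lower bound on spheres around $z$ coming from the excess of charge, converted into an $L^2$ lower bound via Cauchy--Schwarz, then integrated in $r$ to produce the $\log M$ factor. The only (cosmetic) difference is that the paper applies Cauchy--Schwarz on dyadic annuli $\B_{2^{k+1}}(z)\setminus\B_{2^k}(z)$ and sums over $k$, whereas you apply it spherewise and integrate $dr/r$; the two bookkeepings are equivalent.
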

The assumption expresses the fact that there are “way too many ($M \geq 100$) points” within $\B_1$ and the conclusion is that the electric energy in the ball of radius $M$ is bounded below by $M^2 \log M$ - significantly more than the volume of that ball. 
\begin{proof}
The proof uses a standard technique in the analysis of Coulomb gases and relies on the fact that since $-\div(\El) = 2\pi \left( \bX - \Leb \right)$, any “discrepancy” between the number of points of $\bX$ and the mass of $\Leb$ in a given region can be felt by integrating the normal coordinates of the field along the boundary of that region, and then eventually within the electric energy itself.

For $k$ between $1$ and $\log \frac{M}{10} - 1$, for all $t$ between $2^{k}$ and $2^{k+1}$ we have:
\begin{equation}
\label{eq:IPPDiscr}
\int_{\partial \B_{t}(z)} \Eleta \cdot \vec{n} = \int_{\B_{t}(z)} \div \Eleta = \left| \bXeta \cap \B_{t}(z)\right| - \pi t^2 \geq M^2 -  \pi \frac{M^2}{100} \geq \hal M^2.
\end{equation}
Let $\AK$ be the annulus $\B_{2^{k+1}}(z) \setminus \B_{2^{k}}(z)$. Integrating \eqref{eq:IPPDiscr} over $t$ and using Cauchy-Schwarz's inequality, we obtain:
\begin{equation*}
\int_{\AK} |\Eleta|^2 \geq 2^{-2k} \left( \int_{t = 2^{k}}^{2^{k+1}} \dd t \int_{\partial \B_{t}(z)} \Eleta \cdot \vec{n} \right)^2 \geq 2^{-2k} \left( 2^{k} \times c M^2 \right)^2 \geq c' M^4,
\end{equation*}
and thus a sum over $k$ yields, as claimed in \eqref{costtoomany}:
\begin{equation*}
\int_{\B_{M}(z)}  |\Eleta|^2 \geq \sum_{k = 1}^{\log \frac{M}{10} - 1} c' M^4  \geq c'' M^4 \log M.
\end{equation*}
\end{proof}

Using Lemma \ref{lem:toomanyhasacost} and the definition of $\bbX_N$ we deduce the following lower bound on its regularized Coulomb energy.
\begin{corollary}
For some constant $c > 0$ and for all $\eta \in (0,1)$ we have:
\begin{equation}
\label{eq:CoulPpN}
\Coul_\eta(\bbX_N) \geq c N^2 \log N.
\end{equation}
\end{corollary}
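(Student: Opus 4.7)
The plan is to exploit the fact that by construction, the process $\bbX_N$ places $\approx N^2$ coincident points at each of the cluster centers $k - \tau$ for $k \in N\Z^2$, and then to invoke Lemma \ref{lem:toomanyhasacost} at each of these centers to generate a large \emph{local} lower bound on the energy of any compatible truncated field, which, summed over a large box and normalized by stationarity, will yield the desired bound on $\Coul_\eta(\bbX_N)$.

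First I would verify the clustering. Unwinding $\bp_N(y) = \tp(y+\tau)$ shows that every $y \in \LakL - \tau$ is mapped by the displacement to the common point $k - \tau$; in particular, at least $(N-1)^2$ points of $\bbX_N$ sit at each such center, so $|\bbX_N \cap \B_1(k-\tau)| \geq M^2$ for $M = \lfloor (N-1)/10 \rfloor$. Since the centers $\{k - \tau\}_{k \in N\Z^2}$ are pairwise at distance at least $N$, the balls $\B_{N/10}(k-\tau)$ are disjoint — this disjointness is the only geometric ingredient required.

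Next, for any stationary electric field $\bEl$ compatible with $\bbX_N$ and any $\eta \in (0,1)$, Lemma \ref{lem:toomanyhasacost} applied at $z = k - \tau$ with $M \asymp N$ gives, almost surely,
\begin{equation*}
\int_{\B_{N/10}(k - \tau)} |\bEleta|^2 \,\geq\, c\, N^4 \log N.
\end{equation*}
Let $R \gg N$. There are $\sim (R/N)^2$ indices $k \in N\Z^2$ for which $\B_{N/10}(k - \tau) \subset \La_R$; summing the pointwise inequality over these and using disjointness yields
\begin{equation*}
\int_{\La_R} |\bEleta|^2 \,\geq\, c'\, R^2 N^2 \log N.
\end{equation*}

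Finally, stationarity of $\bEl$ gives $\E\big[|\bEleta(0)|^2\big] = |\La_R|^{-1}\, \E\big[\int_{\La_R} |\bEleta|^2\big] \geq c' N^2 \log N$; taking the infimum over all stationary compatible fields produces \eqref{eq:CoulPpN}. There is no real obstacle: the entire argument is a bookkeeping exercise on top of Lemma \ref{lem:toomanyhasacost}, the only mildly delicate point being the choice of radius $N/10$ (rather than, say, $N/2$) to guarantee the disjointness of the balls around neighboring cluster centers, which is harmless up to a constant.
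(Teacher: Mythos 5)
Your argument is correct and is essentially the paper's own proof: both rest on observing that $\gtrsim N^2$ points of $\bbX_N$ coincide at a cluster center $z = k-\tau$, applying Lemma~\ref{lem:toomanyhasacost} there to get a local energy $\gtrsim N^4\log N$, and converting this into a lower bound on $\E\bigl[|\bEleta(0)|^2\bigr]$ by stationarity. The only (immaterial) difference is that the paper uses a single ball $\B_N(z)\subset\La_{10N}$ around one cluster, whereas you sum over the $\sim (R/N)^2$ disjoint balls of radius $N/10$ inside $\La_R$; both normalizations yield the same order $N^2\log N$.
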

\begin{proof}
Under $\Pp_N$, almost surely there is a point $z$ in $\La_{2N}$ which receives all the points from one of the squares, and is thus such that $|\bX \cap \B_1(z)| \geq N^2$, and then by Lemma \ref{lem:toomanyhasacost} we know that if $\El$ is any electric field compatible with $\bX$ then:
\begin{equation*}
\int_{\B_{N}(z)} |\rEleta|^2 \geq c N^4 \log N.
\end{equation*}
Consequently, if $\El$ is a stationary random electric field compatible with $\Pp_N$ we almost surely have: 
\begin{equation*}
\int_{\La_{10N}} |\rEleta|^2 \geq cN^4 \log N
\end{equation*}
(because $\La_{10N}$ contains all the possible disks $\B_{N}(z)$ for $z$ in $\La_{2N}$) and thus:
\begin{equation*}
\E \left[ \int_{\La_{10N}} |\rEleta|^2  \right] = (10N)^2 \E \left[ |\rEleta(0)|^2  \right] \geq cN^4 \log N,
\end{equation*}
which implies \eqref{eq:CoulPpN} by Definition \ref{def:RegularizedEnergy}.
\end{proof}

\subsubsection*{Application: construction of the process}
Let $\{\alpha_N\}_{N \geq 1}$ be a sequence of positive coefficients such that:
\begin{equation*}
\sum_{N \geq 1} \alpha_N = 1, \quad \sum_{N \geq 1} \alpha_N N^2 < + \infty, \quad \sum_{N \geq 1} \alpha_N N^2 \log N = + \infty.
\end{equation*}
Define $\bbX$ as being equal to $\bbX_N$ with probability $\alpha_N$. In view of \eqref{PpNLeb} we have:
\begin{equation*}
\Ww^2_2(\bbX, \Leb) \leq C \sum_{N \geq 1} \alpha_N N^2 < + \infty
\end{equation*}
by our choice of coefficients, hence the process $\bbX$ is at finite $2$-Wasserstein distance of $\Leb$. On the other hand, by \eqref{eq:CoulPpN} and our choice of coefficients we have for all $\eta \in (0,1)$:
\begin{equation*}
\Coul_\eta(\bbX) \geq c \sum_{N \geq 1} \alpha_N N^2 \log N = + \infty.
\end{equation*}

\corT{\subsubsection*{Auxiliary fact: control of the discrepancy by the Coulomb energy}
We sketch here the proof of a useful inequality mentioned in Lemma \ref{lem:twoproperties}, namely the fact that the (regularized) Coulomb energy of a point process $\bbX$ can be used to control the second moment of the discrepancies of $\bbX$ (i.e. the variance of the number of points in large disks). The starting point is the inequality written above in \eqref{eq:IPPDiscr} namely:
\begin{equation*}
\int_{\partial \B_{t}(z)} \Eleta \cdot \vec{n} = \int_{\B_{t}(z)} \div \Eleta = \left| \bXeta \cap \B_{t}(z)\right| - |\B_t(z)|.
\end{equation*}
Taking the second moment and applying Cauchy-Schwarz's inequality yields:
\begin{equation*}
\E\left[  \left(\left| \bXeta \cap \B_{t}(z)\right| - \left|\B_t(z)\right| \right)^2 \right] = \E\left[  \left(\int_{\partial \B_{t}(z)} \Eleta \cdot \vec{n} \right)^2 \right] \leq |\partial \B_{t}(z)| \times \E\left[  \int_{\partial \B_{t}(z)} \left|\Eleta\right|^2 \right].
\end{equation*}
Now, using stationarity, we have $\E\left[  \int_{\partial \B_{t}(z)} \left|\Eleta\right|^2 \right] = |\partial \B_{t}(z)| \times \E\left[ \left|\Eleta(0)\right|^2 \right] = \pi t \times \Coul_\eta(\bbX)$ and we thus obtain, as claimed:
\begin{equation*}
\E\left[  \left(\left| \bXeta \cap \B_{t}(z)\right| - \left|\B_t(z)\right| \right)^2 \right] \leq C t^2 \Coul_\eta(\bbX).
\end{equation*}
}

\subsection{Hyperuniformity does not imply finite Wasserstein distance}
We construct here a stationary, hyperuniform point process which has infinite $1$-Wasserstein distance to Lebesgue. As before, it is obtained as a mixture of certain “building blocks” - but here we rely on a different family of elementary blocks, which were used in \cite[Sec. 6.2]{leble2016logarithmic} as finite-energy, hyperuniform approximations of the Poisson point process.

\subsubsection*{Definition of the elementary blocks}
\newcommand{\BkN}{\mathbf{B}^{(k)}_{N}}
\newcommand{\BN}{\mathbf{B}_{N}}
\newcommand{\bbXp}{\bar{\bX}^{\sbullet}}
For $N \geq 1$, we let $\{\BkN\}_{k \in N\Z^2}$ be a family of i.i.d Bernoulli/Binomial point processes with $N^2$ points in $\LaL$ (i.e.\ each $\BkN$ has the law of $N^2$ i.i.d. uniformly distributed points in $\LaL$). We let $\bbXp_N$ be the point process obtained by pasting the configuration $\BkN$ onto $\LakL$ for all $k \in N \Z^2$. Finally, we let $\bbX_N$ be the \emph{stationary} version obtained by averaging $\bbXp_N$ over translations in $\LaL$.

\subsubsection*{Upper bound on the number variance}
\begin{lemma}
\label{lem:numbervariancePoissonHU}
We have, for some $C > 0$ independent of $N$ and for all $r \geq 1$:
\begin{equation*}
\Var[ |\bbX_N \cap \BR| ] \leq C \begin{cases}
r^2 & \text{ if } r \leq N \\
rN & \text{ if } r \geq N.
\end{cases}
\end{equation*}
\end{lemma}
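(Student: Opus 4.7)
The plan is to exploit the independence of the building blocks $\BkN$ by conditioning on the random shift $\tau$ used to define $\bbX_N$, and then to bound the conditional variance by separating cubes fully inside $\BR$ (which contribute nothing) from cubes straddling its boundary (whose contribution is controlled by the perimeter).

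First, I write
\begin{equation*}
|\bbX_N \cap \BR| = |\bbXp_N \cap (\BR - \tau)| = \sum_{k \in N\Z^2} \left|\BkN \cap \Omega_k\right|, \qquad \Omega_k := (\BR - \tau) \cap \LakL,
\end{equation*}
so that conditional on $\tau$, the summands are independent with $|\BkN \cap \Omega_k| \sim \mathrm{Bin}(N^2, p_k)$, where $p_k := |\Omega_k|/N^2$. Since $\sum_k |\Omega_k| = |\BR|$, we have $\E[|\bbX_N \cap \BR| \mid \tau] = |\BR|$ deterministically, so the law of total variance gives
\begin{equation*}
\Var\bigl[|\bbX_N \cap \BR|\bigr] = \E\bigl[\Var[|\bbX_N \cap \BR| \mid \tau]\bigr] = \E\Bigl[\sum_{k} N^2 p_k(1-p_k)\Bigr].
\end{equation*}

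For $r \leq N$, I use the elementary bound $p_k(1-p_k) \leq p_k$, yielding $\sum_k N^2 p_k(1-p_k) \leq \sum_k |\Omega_k| = |\BR| \leq \pi r^2$ almost surely, which gives the first case.

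For $r \geq N$, the key observation is that $p_k(1-p_k) = 0$ as soon as $\LakL$ is either fully inside or fully outside $\BR - \tau$; only cubes meeting the boundary $\partial(\BR - \tau)$ contribute, each by at most $N^2/4$. It thus suffices to count these \emph{boundary cubes}: a standard geometric argument (the cubes straddling $\partial \BR$ are contained in a thin annulus of width $O(N)$ around the sphere, whose area is $O(rN + N^2)$, and which can contain at most $C r/N$ cubes when $r \geq N$) gives at most $C r/N$ such cubes. Hence $\sum_k N^2 p_k(1-p_k) \leq C(r/N) \cdot N^2 = CrN$ almost surely, which proves the second case.

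The main (minor) obstacle is the geometric perimeter count: showing that the number of cubes of sidelength $N$ that intersect the sphere $\partial \BR$ is $O(r/N)$ for $r \geq N$. This follows by a straightforward covering argument, using that the $N$-neighborhood of $\partial \BR$ has area at most $4\pi r N$ when $r \geq N$, and so contains at most $C r/N$ cubes of volume $N^2$. The remaining steps (computing the Binomial variance, invoking the law of total variance) are routine.
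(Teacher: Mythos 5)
Your proof is correct and follows essentially the same route as the paper's: condition on the uniform shift, use independence of the blocks, and bound the conditional variance by counting boundary cubes for $r \geq N$. Your handling of the $r \leq N$ case via $p_k(1-p_k) \leq p_k$ and your explicit remark that the conditional expectation equals $|\BR|$ deterministically are minor (and clean) refinements of the same argument.
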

\begin{proof}[Proof of Lemma \ref{lem:numbervariancePoissonHU}]
Write $\bbX_N$ as $\bbXp_N + \bshift_N$, where $\bbXp_N$ is the “non-averaged” process and $\bshift_N$ is an independent shift uniformly distributed in $\LaL$. Fix $\shift \in \LaL$ and let us bound the variance of $\bbXp_N + \shift$.
\begin{itemize}
   \item If $r\leq N$, we have $\Var\left[|(\bbXp_N + \shift) \cap \BR| \right] \leq Cr^2$. Indeed:
   \begin{itemize}
      \item If $\BR$ is entirely contained within one of the shifted squares $\LakL + \shift$ for $k \in N\Z^2$, then the number of points in $\BR$ follows a binomial distribution of parameters $\left(N^2, \frac{|\BR|}{N^2}\right)$ and is thus $\O(r^2)$.
       \item If $\BR$ intersects several of the shifted squares, the number of points in $\BR$ is the sum of (at most four) independent Binomial random variables whose variances add up to $\O(r^2)$ again. 
    \end{itemize} 
   \item If $r>N$, only the shifted squares $\LakL + \shift$ ($k \in N\Z^2$) that intersect the boundary $\partial \BR$ contribute to the number variance - all the shifted squares that are included in $\partial \BR$ contain exactly $N^2$ points. There are $\O\left(\frac{r}{N}\right)$ many shifted squares intersecting $\partial \BR$, which behave independently, and the contribution to the number variance in each of these boxes is bounded by $N^2$. Hence the number variance of $\bbXp_N + \shift$ in $\BR$ is bounded by $\O\left(\frac{r}{N}\right) \times N^2= \O(rN)$.
\end{itemize}
Averaging over the choice of $\shift$ concludes the proof of Lemma \ref{lem:numbervariancePoissonHU}.
\end{proof}

\subsubsection*{Lower bound on the Wasserstein distances}
Recall  from \eqref{wpperunit} that if $\bX$ is a point configuration in $\R^2$, $\w_1(\bX, \Leb)$ denotes the $1$-Wasserstein distance \emph{per unit volume} between $\bX$ and $\Leb$, namely:
\begin{equation*}
\w_1(\bX, \Leb) := \inf_{\Pi \in \Cpl(\bX, \Leb)} \limsup_{n \to \infty} \frac{1}{n^\d} \int_{\La_n \times \R^2} |x-y| \dd \Pi(x,y).
\end{equation*}

\begin{lemma}
\label{lem:costPoissonHU}
There exists $c > 0$ independent of $N$ such that:
\begin{equation}
\label{eq:costPoisson}
\E\left[ \w_1(\bbX_N, \Leb) \right] \geq c \log^{\hal} N.
\end{equation}
\end{lemma}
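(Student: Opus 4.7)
By translation invariance of $\w_1$ and of $\Leb$, we have $\w_1(\bbXp_N + \tau, \Leb) = \w_1(\bbXp_N, \Leb)$ for every $\tau \in \R^2$; hence $\w_1(\bbX_N, \Leb) = \w_1(\bbXp_N, \Leb)$ almost surely, and it suffices to prove $\E[\w_1(\bbXp_N, \Leb)] \geq c\sqrt{\log N}$. The strategy combines the Kantorovich--Rubinstein dual formulation of the $1$-Wasserstein distance with the classical Ajtai--Komlós--Tusnády (AKT) matching lower bound, applied independently in each i.i.d.\ block $\BkN$ of $\bbXp_N$.

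On each box $\LakL$, the restriction $\bbXp_N|_{\LakL}$ is a Binomial sample of $N^2$ i.i.d.\ uniform points, with $\Leb(\LakL) = N^2$. A \emph{Dirichlet} version of AKT will supply a random 1-Lipschitz function $F_k$ supported in $\LakL$, vanishing on $\partial \LakL$ (so that $\|F_k\|_\infty \leq CN$), such that
\begin{equation*}
\E\left[\int F_k \, d(\bbXp_N - \Leb)\right] \geq c N^2 \sqrt{\log N}.
\end{equation*}
This is the main technical input: it follows from the standard dyadic/Haar AKT construction, in which the witness is assembled from random signs on a dyadic subdivision of $\LakL$ paired with compactly-supported Lipschitz bumps that already vanish on the sub-cell boundaries, so the Dirichlet condition comes essentially for free, without spoiling the $\sqrt{\log N}$ scaling.

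Choose a buffer width $L = L(n) \to \infty$ with $NL/n \to 0$ (take for instance $L = \sqrt{n}$) and set $F := \sum_{k\,:\,\LakL \subset \La_{n-L}} F_k$. Since each $F_k$ vanishes on its box boundary and the $F_k$ have disjoint supports, $F$ is globally 1-Lipschitz on $\R^2$, supported in $\La_{n-L}$, with $\|F\|_\infty \leq CN$. For any coupling $\Pi$ of $\bbXp_N$ and $\Leb$, Kantorovich--Rubinstein duality together with a splitting of the integral according to whether $x \in \La_n$ (using that $F(x) = 0$ on $\La_n^c$) yields
\begin{equation*}
\int F\, d(\bbXp_N - \Leb) \;\leq\; \int_{\La_n \times \R^2} |x-y|\, d\Pi \;+\; \|F\|_\infty \cdot \Pi(\La_n^c \times \La_{n-L}).
\end{equation*}
A marginal-balance computation bounds $\Pi(\La_n^c \times \La_{n-L}) = (n-L)^2 - \Pi(\La_n \times \La_{n-L})$ by $O(nL) + L^{-1}\int_{\La_n \times \R^2}|x-y|\, d\Pi + |\bbXp_N \cap (\La_n \setminus \La_{n-L})|$; the last term is $O(nL)$ a.s.\ by the intensity-$1$ shell volume, and the middle term reflects the fact that each unit of mass transported from $\La_{n-L}$ to $\La_n^c$ pays at least $L$. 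Rearranging,
\begin{equation*}
\frac{1}{n^2}\int_{\La_n \times \R^2}|x-y|\, d\Pi \;\geq\; \frac{\tfrac{1}{n^2}\int F\, d(\bbXp_N - \Leb) \;-\; O(NL/n)}{1 + O(N/L)}.
\end{equation*}

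As $n \to \infty$, the strong law of large numbers applied to the $(n/N)^2$ i.i.d.\ block contributions $\int F_k\, d(\bbXp_N - \Leb)$ gives $\frac{1}{n^2}\int F\, d(\bbXp_N - \Leb) \to c\sqrt{\log N}$ almost surely, while $NL/n \to 0$ and $N/L \to 0$. Crucially, the lower bound produced by the above inequality does not depend on $\Pi$, so almost surely $\liminf_n \frac{1}{n^2}\inf_\Pi \int_{\La_n \times \R^2}|x-y|\, d\Pi \geq c\sqrt{\log N}$. Invoking the standard interchange $\inf_\Pi \limsup_n \geq \limsup_n \inf_\Pi \geq \liminf_n \inf_\Pi$, we conclude $\w_1(\bbXp_N, \Leb) \geq c\sqrt{\log N}$ a.s., which yields the claimed expectation bound. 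The main technical obstacle is the Dirichlet AKT lower bound: the $\sqrt{\log N}$ order of $\E[\WWW_1(\bbXp_N|_{\LakL}, \Leb|_{\LakL})]$ is classical, but one must verify that a near-optimal 1-Lipschitz witness can be chosen to vanish on $\partial \LakL$, which is what allows the box-wise test functions $F_k$ to glue into a single globally 1-Lipschitz function $F$.
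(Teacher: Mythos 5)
Your proposal is correct in outline and shares the paper's two key ingredients: Kantorovich--Rubinstein duality against a $1$-Lipschitz witness supported in a box of side $N$, and an AKT-type lower bound $\E[\int f\,\dd(\mathbf{B}_N-\Leb)]\geq cN^2\log^{1/2}N$ for that witness. Where you diverge is in how you pass from the single-box estimate to the infinite-volume cost per unit volume $\w_1$: the paper short-circuits this by invoking \cite[Lemma 2.6]{erbar2023optimal}, which directly bounds $\E[\w_1(\bar{\bX}^{\sbullet}_N,\Leb)]$ below by $N^{-2}$ times the expected optimal cost in a single $\La_N$, whereas you re-derive this reduction by hand — gluing Dirichlet witnesses $F_k$ over the blocks, controlling leakage across $\partial\La_n$ with a buffer of width $L=\sqrt{n}$ and a mass-balance estimate, and passing to the limit via the law of large numbers and the interchange $\inf_\Pi\limsup_n\geq\liminf_n\inf_\Pi$. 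Your route is longer but more self-contained (it essentially reproves the cited lemma in this periodic setting), and your mass-balance step is sound since each box of $\bar{\bX}^{\sbullet}_N$ carries exactly $N^2$ points, so the shell counts are deterministic. The one place where you are less careful than the paper is the witness itself: you assert the "Dirichlet AKT" input for the \emph{Binomial} point process, while the paper, citing \cite[Lemma 2.7]{HuMaOt23} (stated there for the Poisson process), explicitly verifies the moment identities $\E N_Q^2=|Q|$, $\E N_QN_{Q'}=0$, $\E N_Q^4\lesssim|Q|^2$, etc., and the invariance under measure-preserving maps needed to transfer the construction from Poisson to Binomial. That verification should be added to make your argument complete; note also that the witness of \cite{HuMaOt23} is compactly supported in the box, so the Dirichlet condition you need for gluing does indeed come for free, as you anticipated.
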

\begin{proof}[Proof of Lemma \ref{lem:costPoissonHU}]
By translation-invariance of the Lebesgue measure, it is enough to prove the lower bound \eqref{eq:costPoisson} for the non-averaged process $\bbXp_N$. Using \cite[Lemma 2.6]{erbar2023optimal} with { $\vartheta(x-y)=|x-y|$}, we know that $\E\left[ \w_1(\bbXp_N, \Leb) \right]$ is bounded below by:
\begin{equation}
\label{obs1}
\E\left[ \w_1(\bbXp_N, \Leb) \right] \geq \frac{1}{N^2} \E\left[ \inf_{\Pi \in \Cpl(\bbXp_N, \Leb)} \int_{\La_N \times \R^2} |x-y| \dd \Pi(x,y) \right]
\end{equation}
(we could in fact replace $N$ by any $R > 0$ in the right-hand side, but this choice is natural and convenient). Next, observe that for all point configuration $\bX$, and for all couplings $\Pi$ between $\bX$ and $\Leb$, if $f : \R^2 \to \R$ is a $1$-Lipschitz function supported on $\La_N$, we have:
\begin{equation}
\label{obs2}
\int_{\La_N \times \R^2} |x-y| \dd \Pi(x,y) \geq \int_{\La_N \times \La_N} \left(f(x) - f(y)\right) \dd \Pi(x,y) = \int_{\La_N} f(x) \dd \bX(x) - \int_{\La_N} f(x) \dd x.
\end{equation}
Since the law of $\bbXp_N$ in $\La_N$ is given by a binomial point process $\BN$ (with $N^2$ i.i.d.\ points), it is thus sufficient for us to produce such a function $f$ with:
\begin{equation}
\label{eq:goodf}
\E \left[ \int_{\La_N} f(x) \dd \BN(x) - \int_{\La_N} f(x) \dd x \right] \geq c N^2 \log^{\hal} N,
\end{equation}
as in view of \eqref{obs1}, \eqref{obs2} this will imply a lower bound of the same type for $\E\left[ \w_1(\bbXp_N, \Leb) \right]$. 

The construction of a function $f$ satisfying \eqref{eq:goodf} was done in \cite{HuMaOt23} in the case of a Poisson point process in $\La_N$, but an inspection of the proof shows that it also applies for a Binomial point process (which is not surprising as the two are very closely related). Indeed, \cite[Lemma 2.7]{HuMaOt23} only uses some moment estimates and some invariance properties of the Poisson process under measure preserving transformation that we can easily check for our case. 

More precisely, for a cube $Q \subset \R^2$ call its "right half" $Q_r$ and its "left half" $Q_l$ and define $N(Q)$ as the difference of number of points in the right and left half, namely:
\begin{equation*}
N(Q) := |\BN \cap Q_r|- |\BN \cap Q_l|.
\end{equation*} 
Moreover, fix a reference function $\hat\zeta$ with 
\begin{align*}
{\mbox{ supp}}\hat\zeta\in(0,1)^2,\quad\int\hat\zeta\,d x=0,\quad
\int_{ x_1>\frac{1}{2}}\hat\zeta d x
-\int_{x_1<\frac{1}{2}}\hat\zeta d x=1.
\end{align*}
For every cube $Q$ we define $\zeta_Q$ via the transformation
\begin{align*}
\zeta_Q(A_Q\hat x)=\hat\zeta(\hat x)\;\mbox{where}\;A_Q\;\mbox{is the affine map such that}\;Q=A_Q(0,1)^2.
\end{align*}
For the argument of \cite[Lemma 2.7]{HuMaOt23} to apply, we need to check that for any dyadic cubes $Q, Q'\subset \Lambda_N$ (i.e.\ $Q, Q'$ are obtained by dyadic subdivision of $\LaL$) we have:
\begin{align}
\mathbb{E}N_Q^2=|Q|,\quad\mathbb{E}N_Q N_{Q'}=0\;\mbox{for}\;Q\not=Q',\quad
\mathbb{E}N_Q\int\zeta_Q d\mu =|Q|, \quad \E N_Q^4\lesssim |Q|^2, \quad \E(\int \zeta_Q \dd \BN)^2\lesssim |Q|.
\end{align}
These estimates follow from direct computations. The only other ingredient needed in the proof of \cite[Lemma 2.7]{HuMaOt23} is the invariance of the law of the Binomial process under measure preserving transformations of the Lebesgue measure which induces martingale properties w.r.t.\ dyadic subdivisions, see discussion below \cite[(2.38)]{HuMaOt23}.
\end{proof}

\subsubsection*{Application: construction of the process}
Let $\{\alpha_N\}_{N \geq 1}$ be a sequence of positive coefficients such that:
\begin{equation*}
\sum_{N \geq 1} \alpha_N = 1, \quad \sum_{N \geq 1} \alpha_N \log^{\hal}(N)  = + \infty,
\end{equation*}
and define the point process $\bbX$ as being given by the random variable $\bbX_N$ with probability $\alpha_N$. We now check that $\bbX$ satisfies our conclusions.

On the one hand, for $r \geq 1$, we can compute the rescaled number variance of $\bbX$ in $\BR$ using Lemma~\ref{lem:numbervariancePoissonHU}:
\begin{equation*}
\sigma(r) = \frac{1}{\pi r^2} \left(\sum_{N \geq 1, r \leq N} \alpha_N r^2 + \sum_{N \geq 1, r \geq N} \alpha_N N r \right) \leq \sum_{N \geq r} \alpha_N + \sum_{N \leq r} \alpha_N \frac{N}{r} = o(1) \text{ as } r \to \infty,
\end{equation*}
hence $\bbX$ is indeed hyperuniform. On the other hand, using Lemma \ref{lem:costPoissonHU} we get:
\begin{equation*}
\E\left[ \w_1(\bbX, \Leb) \right] = \sum_{N \geq 1} \alpha_N \E\left[ \w_1(\bbX_N, \Leb) \right] \geq c \sum_{N \geq 1} \alpha_N \log^{\hal} N = + \infty,
\end{equation*}
hence $\Ww_1(\bbX, \Leb) = + \infty$.

\subsubsection*{Further comments}
In Lemma \ref{lem:costPoissonHU} we have found a lower bound on the $1$-Wasserstein cost per unit volume $\E\left[ \w_1(\bbX_N, \Leb) \right]$, using the local lower bound implied by \eqref{eq:goodf} on the distance between a Binomial process and the Lebesgue measure. In short, we have proven that:
\begin{equation}
\label{wpBinomial}
\E\left[ \w_1(\bbX_N, \Leb) \right] \geq c \log^{\frac{1}{2}} N,\quad \E\left[ \w^{p}_p(\bbX_N, \Leb) \right] \geq c \log^{\frac{p}{2}} N \text{ for } p \geq 1,
\end{equation}
the second inequality following from the first one by Hölder's inequality. In fact, the celebrated result of Ajtai-Komlos-Tusnady \cite{ajtai1984optimal} imply that an upper bound of the same order of magnitude as in \eqref{wpBinomial} holds, hence that $\E\left[ \w^{p}_p(\bbX_N, \Leb) \right]$ is exactly of order $\log^{\frac{p}{2}} N$. By choosing the coefficients $\alpha_N$ appropriately depending on $p_\star \geq 1$, it is thus possible to produce stationary point processes which are hyperuniform, have a finite $p$-Wasserstein distance to Lebesgue for $p < p_\star$ and infinite $p$-Wasserstein distance for $p \geq p_\star$.

\subsection*{Acknowledgements}
The authors would like to thank Raphaël Lachièze-Rey \& Yogeshwaran D for communicating an early draft of \cite{lachieze2024hyperuniformity}, and for interesting discussions about these topics.

\corT{We thank the referees for their very careful reading and helpful criticism of our paper.}

\emph{MH is supported by the Deutsche Forschungsgemeinschaft (DFG, German Research Foundation) through the SPP 2265 \textit{ Random Geometric Systems} as well as under Germany's Excellence Strategy EXC 2044 -390685587, Mathematics M\"unster: Dynamics--Geometry--Structure.}

\emph{TL acknowledges the support of JCJC grant ANR-21-CE40-0009 from Agence Nationale de la Recherche. This work was completed during a visit of TL to Mathematics Münster thanks to the “Münster research fellow” program.}

\bibliographystyle{alpha}
\bibliography{HUCW}
\end{document}